\definecolor{colComments}{rgb}{1,0,0}
\theoremstyle{plain}  
\numberwithin{equation}{section} 
\newtheorem{Definition}[equation]{Definition} 
\newtheorem{Definition/Lemma}[equation]{Definition/Lemma}
\newtheorem{Lemma}[equation]{Lemma}
\newtheorem{Theorem}[equation]{Theorem}
\newtheorem{Proposition}[equation]{Proposition}
\newtheorem{Corollary}[equation]{Corollary}
\newtheorem{Remark}[equation]{Remark}
\newtheorem{Notation}[equation]{Notation}
\newtheorem{Notation/Lemma}[equation]{Notation/Lemma}
\newtheorem{Comparison}[equation]{Comparison}
\font\triangles=beta
\newcommand{\Squar}{\mbox{\triangles 3}}
\newcommand{\UR}{\mbox{\triangles 1}}
\newcommand{\UP}{\mbox{\triangles 2}}
\newcommand{\C}{\mbox{\triangles 9}}
\font\trianglesb=betab
\newcommand{\Squarb}{\mbox{\trianglesb 3}}
\newcommand{\URb}{\mbox{\trianglesb 1}}
\newcommand{\UPb}{\mbox{\trianglesb 2}}
\title{Supercharacter theories for Sylow $p$-subgroups
        \\ of the Ree groups}
\author{Yujiao Sun}
\affil{\footnotesize School of Mathematics and Statistics,
         Beijing Institute of Technology,\\
      Beijing 100081, P. R. China}
\affil{\footnotesize E-mail: yujiaosun@bit.edu.cn}
\date{}
\begin{document}
\maketitle

\begin{abstract}
We determine a supercharacter theory for Sylow $p$-subgroups ${^2{G}_2^{syl}(3^{2m+1})}$
of the Ree groups $^2{G}_2(3^{2m+1})$,
calculate the conjugacy classes of ${^2{G}_2^{syl}(3^{2m+1})}$,
and establish the character table of ${^2{G}_2^{syl}(3)}$.
\end{abstract}


{\small \textbf{Keywords: }supercharacter theory; character table;
                           Sylow $p$-subgroup}

{\small \textbf{2010 Mathematics Subject Classification: }Primary 20C15, 20D15. Secondary 20C33, 20D20}

\section{Introduction}
Let $p$ be a fixed odd prime,
$q$ a fixed power of $p$,
$\mathbb{F}_q$ the finite field with $q$ elements,
$\mathbb{N}^*$ the set of positive integers,
and $A_n(q)$ ($n\in \mathbb{N}^*$) the group of upper unitriangular $n\times n$-matrices
over $\mathbb{F}_q$.
Thus $A_n(q)$ is a Sylow $p$-subgroup \cite{Carter1}
of the Chevalley group of Lie type $A_{n-1}$ ($n \geq 2$) over $\mathbb{F}_q$.
It is known that determining the conjugacy classes of $A_n(q)$ for all $n$ and $q$ is a wild problem.
Higman's conjecture \cite{hig} states that
the number of conjugacy classes of $A_n(q)$
is an integer polynomial in $q$ depending on $n$.
Lehrer \cite{leh}
and later Isaacs \cite{isa2}
refined
Higman's conjecture.
However, Higman's conjecture is still open,
see e.g.
\cite{Evs2011, ps, vla}.

A generalization of the character theory, {\it supercharacter theory},
was introduced in \cite{di}.
The supercharacter theory replaces
irreducible characters,
conjugacy classes
and character table
by supercharacters,
superclasses
and
supercharacter table, respectively.
Andr\'{e} \cite{and1} using the Kirillov orbit method \cite{Kirillov},
and Yan \cite{yan2} using an algebraic and combinatorial method
determined the Andr\'{e}-Yan supercharacter theory for $A_n(q)$.
Andr\'{e} and Neto \cite{an1,an2, an3} studied the supercharacter theories
for the Sylow $p$-subgroups of untwisted types $B_n$, $C_n$ and $D_n$.
These supercharacters arise as restrictions of supercharacters of overlying
full upper unitriangular groups $A_N(q)$ to the Sylow $p$-subgroups,
and the superclasses arise as intersections of superclasses of $A_N(q)$ with these groups.
Marberg \cite{Mar2012} gave explicit definitions of
Andr\'{e}-Neto supercharacter theories of type $B_n$ and $D_n$.
The construction of \cite{an1, an3} has been extended to
Sylow $p$-subgroups of
finite classical groups of untwisted Lie type in a uniform way in \cite{AFM2015}.
Andrews \cite{Andrews2015, Andrews2016}
reproved the construction of \cite{an1, an2, an3}
and extended it once more to Sylow $p$-subgroups of twisted type $^2A_n$.

Jedlitschky
introduced the {\it monomial linearisation} method for a finite group
in his doctoral thesis \cite{Markus1}.
As a result, he
decomposed the {Andr\'{e}-Neto supercharacters} for Sylow $p$-subgroups
of Lie type $D_n$ into much smaller characters.
The smaller characters are pairwise orthogonal,
and each irreducible character is a constituent of exactly one of the smaller characters.
Thus these characters look like finer supercharacters
for the Sylow $p$-subgroups of type $D$.
So far there are no corresponding finer superclasses for the Sylow $p$-subgroups
of type $D$.
Recently, a monomial linearisation of the finite classical groups of untwisted type
is exhibited in \cite{Guo2017Orbit},
and some modules affording Andr\'{e}-Neto supercharacters are decomposed into a direct sum of submodules
in \cite{Guo2018On}.
One may ask, if there is a general supercharacter theory for
Sylow $p$-subgroups of all finite groups of Lie type
based on the monomial linearisation method.

The construction uses monomial linearisation to obtain supercharacters
and then supplements it
to establish superclasses as well in order to exhibit a full supercharacter theory.
The method seems to work for all Lie types,
indeed
the author determined
a full supercharacter theory
for the Sylow $p$-subgroup ${^3D}^{syl}_4(q^3)$ of twisted type ${^3D}_4$ in \cite{sun3D4super}
and for the Sylow $p$-subgroup $G_2^{syl}(q)$ of untwisted type $G_2$ in \cite{sunG2}.
In this paper,
we construct a full
supercharacter theory
for the Sylow $p$-subgroup ${^2{G}_2^{syl}(3^{2m+1})}$
of twisted type $^2G_2$.

In this paper,
we introduce Jedlitschky's construction of monomial modules in Section \ref{sec:The construction of monomial modules}.
For the matrix Sylow $p$-subgroup $U:={^2{G}_2^{syl}(3^{2m+1})}$ of the Ree group ${^2{G}_2(3^{2m+1})}$ (see Section \ref{sec:2G2-1}),
the explicit construction of a monomial $A_8(q)$-module $\mathbb{C}U$ is determined
in Section \ref{sec: monomial 2G2-module}.
In Section \ref{sec:U-orbit modules-2G2},
we classify the ${^2{G}_2^{syl}(3^{2m+1})}$-orbit modules
which lead to the supercharacters in Section \ref{sec: supercharacter theories-2G2}.
After that,
we calculate all of the conjugacy classes of ${^2{G}_2^{syl}(3^{2m+1})}$
in Section \ref{conjugacy classes-2G2}
which satisfy Higman's conjecture.
In Section \ref{conjugacy classes-2G2},
at the same time, we get a partition
which is proved to be a set of superclasses
in Section \ref{sec: supercharacter theories-2G2}.
In the last section,
we further determine the character table for the special case of ${^2{G}_2^{syl}(3)}$ (i.e. $m=0$).

Supercharacter theories have shown to be relevant to a number of areas of mathematics.
For example, Hendrickson \cite{Hend} obtained the connection between supercharacter theories and central Schur rings.
Brumbaugh et al. \cite{BBFGGKM2014} determined certain exponential sums of interest in number theory (e.g., Gauss, Ramanujan, Heilbronn, and Kloosterman sums) as supercharacters of abelian groups.

Here we fix some notations:
Let $K$ be a field,
$K^*$ the multiplicative group $K\backslash\{0\}$ of $K$,
$K^+$ the additive group of $K$,
$\mathbb{C}$ the complex field,
and
$\mathbb{N}$ the set of all non-negative integers.
Let $\mathrm{Mat}_{8 \times 8}(K)$
be the set of all $8\times 8$ matrices over $K$.
If $m\in \mathrm{Mat}_{8\times 8}(K)$, then set $m:=(m_{i,j})$,
where $m_{ij}:=m_{i,j}\in K$ denotes the $(i,j)$-entry of $m$.
We set $e_{ij}:=e_{i,j}\in \mathrm{Mat}_{8\times 8}(K)$ the matrix unit
with $1$ in the $(i,j)$-position and $0$ elsewhere.
Let $A^{\top}$ denote the {transpose} of $A\in \mathrm{Mat}_{8\times 8}(K)$.
Let $I_8$ be the $8 \times 8$ identity matrix $I_{8\times 8}$,
and $1$ be the identity element of a finite group.

\section{The construction of monomial modules}
\label{sec:The construction of monomial modules}
In this section, we recall the construction of the monomial modules,
and mainly refer to \cite{Markus1}.

Let $G$ be a
finite multiplicative group,
$\mathrm{Irr}(G)$ the set of all complex irreducible
characters of $G$,
$V$ a finite abelian additive group
and
$K$ a field.
If $V$ is a $K$-vector space, then let it be finite dimensional.
If $X$ is a set, then $KX$ denotes  the $K$-vector space with the $K$-basis $X$.
Let $M$ be a right $KG$-module
and $-*-$ be the module operation:
$-*- \colon M\times G  \to  M:(m,g)\mapsto m*g$.
Then the right $KG$-module $M$ is also denoted by
$(M,*)_{KG}$, or by $M_{KG}$ for short.

Let $K^G:=\{\tau \colon G \to  K \mid  \tau \text{ is a map}\}$.
Define addition and scalar multiplication on $K^G$ as follows:
For all $\tau, \sigma \in K^G$ and $\lambda\in K$,
we set
$(\tau+\sigma)(g)= \tau(g)+\sigma(g)$  and
$(\lambda\tau)(g)= \lambda(\tau(g))$
for all $g\in G$,
then $K^G$ is a $K$-vector space.
For $g\in G$, set
$\tau_g \colon G \to  K:h\mapsto
\left\{
\begin{array}{ll}
1,& g=h\\
0,& g\neq h
\end{array}
\right.
=\delta_{g,h}$,
where $\delta_{g,h}$ is the Kronecker delta.
We have that $\{\tau_g \mid  g\in G\}$ is a $K$-basis of $K^G$.
In particular, $\tau=\sum_{g\in G}\tau(g)\tau_g$ for all $\tau \in K^G$.
The map
$\varPhi \colon K^G  \to  KG$
induced by $\tau_g \mapsto g$
is a $K$-isomorphism.
In particular, $\varPhi(\tau)=\sum_{g\in G}\tau(g)g$ for all $\tau \in K^G$.
Let $KG$ be the group algebra with the multiplication
\begin{align*}
(\sum_{g\in G}{\alpha_g g})(\sum_{h\in G}{\beta_h h})
=\sum_{x\in G}\sum_{g\in G}{\alpha_g \beta_{g^{-1}x} x}.
\end{align*}
For $\tau, \sigma \in K^G$, the multiplication $\tau\sigma$ is defined by
\begin{align*}
\tau \sigma \colon  G \to  K
:y\mapsto \sum_{g\in G}{\tau(g)\sigma(g^{-1}y)},
\end{align*}
then $K^G$ is an associative $K$-algebra,
and $\varPhi \colon K^G \to KG: \tau \mapsto \sum_{g\in G}\tau(g)g$ is an algebra isomorphism.
In particular, $\tau_g\tau_h=\tau_{gh}$ for all $g,h\in G$.

For a finite abelian group $V$,
let $\hat{V}:=\mathrm{Hom}(V,\mathbb{C}^{*})$.
Then
$\mathrm{Irr}(V)=\hat{V}\subseteq \mathbb{C}^V$
is a linearly independent subset of the $\mathbb{C}$-vector space $\mathbb{C}^V$.
We have
$
 \dim_{\mathbb{C}} \mathbb{C}\hat{V}
 =|\hat{V}|
{=}|V|
 =\dim_{\mathbb{C}} \mathbb{C}{V}
 {=}\dim_{\mathbb{C}}\mathbb{C}^V$,
so $\mathbb{C}\hat{V}=\mathbb{C}^V$ (as $\mathbb{C}$-vector spaces).

\begin{Lemma}[see {\cite[\S 2.1]{Markus1}} and {\cite[2.6]{GuoD2018}}]\label{f*}
Let $f \colon G \to   V$ be a map.
Then
$f^* \colon \mathbb{C}^V \to  \mathbb{C}^G:\phi\mapsto f^*(\phi)=\phi f$
defines a $\mathbb{C}$-linear map,
and $f$ is surjective (bijective, injective)
if and only if $f^*$ is injective (bijective, surjective).
If $f$ is surjective,
then $\{\hat{\chi} f \mid \hat{\chi} \in \hat{V}\}$
 is a $\mathbb{C}$-basis of $\mathrm{im}{\, f^*}= f^*(\mathbb{C}^V)$.
\end{Lemma}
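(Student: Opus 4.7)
The plan is to handle the three claims in sequence: (i) $\mathbb{C}$-linearity of $f^*$, (ii) the three set-theoretic equivalences, where the ``bijective'' case follows from the other two, and (iii) the basis statement. The $\mathbb{C}$-linearity is immediate because addition and scalar multiplication in $\mathbb{C}^V$ and $\mathbb{C}^G$ are defined pointwise, so $(\lambda\phi+\psi)f$ and $\lambda(\phi f)+\psi f$ agree at every $g\in G$.

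For ``$f$ surjective $\iff$ $f^*$ injective'', the forward direction is direct: if $\phi f=0$ and every $v\in V$ has the form $v=f(g)$, then $\phi(v)=(\phi f)(g)=0$, so $\phi=0$. For the converse I would exhibit a nonzero element of $\ker f^*$ whenever $f$ fails to be surjective: pick $v_0\in V\setminus f(G)$ and use the indicator $\tau_{v_0}\in\mathbb{C}^V$, which is nonzero but satisfies $\tau_{v_0}f\equiv 0$.

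For ``$f$ injective $\iff$ $f^*$ surjective'', the forward direction uses injectivity to \emph{define} a preimage: given $\psi\in\mathbb{C}^G$, set $\phi(v):=\psi(g)$ if $v=f(g)$ (well-defined by injectivity of $f$) and $\phi(v):=0$ otherwise, so $f^*(\phi)=\phi f=\psi$. For the converse, if $f(g_1)=f(g_2)$ with $g_1\neq g_2$, every function in $\mathrm{im}\,f^*$ agrees at $g_1$ and $g_2$, so the indicator $\tau_{g_1}\in\mathbb{C}^G$ cannot lie in the image. The ``bijective'' equivalence is then the conjunction of the two preceding equivalences.

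For the basis statement, assume $f$ is surjective, so $f^*$ is injective by (ii). The excerpt has already established that $\mathbb{C}\hat{V}=\mathbb{C}^V$ with $\hat{V}$ a $\mathbb{C}$-basis; applying the injective $\mathbb{C}$-linear map $f^*$ sends this basis to a basis of its image, giving that $\{\hat{\chi}f\mid \hat{\chi}\in\hat{V}\}=f^*(\hat{V})$ is a $\mathbb{C}$-basis of $f^*(\mathbb{C}^V)=\mathrm{im}\,f^*$. There is no genuine obstacle here: each step reduces to choosing appropriate indicator functions $\tau_v$ or $\tau_g$ and invoking pointwise definitions, so the only care required is keeping the directions of the three equivalences straight.
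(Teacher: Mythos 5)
Your proof is correct and complete; note that the paper itself gives no argument for this lemma, simply citing \cite[\S 2.1]{Markus1} and \cite[2.6]{GuoD2018}, and your verification is the standard one those sources use: pointwise linearity, indicator functions $\tau_{v_0}$ and $\tau_{g_1}$ for the two converse directions, extension by zero for surjectivity of $f^*$ when $f$ is injective, and the fact that an injective $\mathbb{C}$-linear map carries the basis $\hat{V}$ of $\mathbb{C}^V=\mathbb{C}\hat{V}$ to a basis of $\mathrm{im}\,f^*$. No gaps: keeping the directions of the equivalences straight is indeed the only point of care, and you have them right.
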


\begin{Corollary}\label{f|U*}
 Let $f \colon G \to   V$ be a surjective map,
 and  $U\leqslant G$ such that $f|_U$ is bijective.
 Then
$ f|_U^* \colon \mathbb{C}^V \to  \mathbb{C}^U:
  \phi\mapsto f|_U^*(\phi)=\phi f|_U=f^*(\phi)|_U$
defines a $\mathbb{C}$-isomorphism.
In particular,
 $\{\hat{\chi} f|_U \mid \hat{\chi} \in \hat{V}\}$
 is a $\mathbb{C}$-basis of $\mathbb{C}^U$.
\end{Corollary}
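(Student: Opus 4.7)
The plan is to apply Lemma \ref{f*} directly, with the map $f|_U\colon U \to V$ playing the role of $f\colon G \to V$. Since $U \leqslant G$ is a subgroup of a finite group, $U$ itself is a finite group, so Lemma \ref{f*} is legitimately available in this setting. By hypothesis $f|_U$ is bijective, so in particular surjective, and the lemma then delivers the stronger conclusion that $(f|_U)^*\colon \mathbb{C}^V \to \mathbb{C}^U$ is bijective, i.e.\ a $\mathbb{C}$-isomorphism. This already settles the main claim.

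The chain of equalities $f|_U^*(\phi) = \phi f|_U = f^*(\phi)|_U$ is purely a matter of unfolding definitions. The first equality is the definition of $f|_U^*$ as pullback along $f|_U$; for the second, observe that for every $u \in U$,
\[
f^*(\phi)(u) = (\phi f)(u) = \phi(f(u)) = \phi(f|_U(u)) = (\phi f|_U)(u),
\]
so $f^*(\phi)|_U = \phi f|_U$. Thus the formula in the statement is nothing more than the fact that restriction commutes with precomposition.

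For the ``in particular'' assertion, apply Lemma \ref{f*} once more, now using only that $f|_U$ is surjective: it gives that $\{\hat{\chi} f|_U \mid \hat{\chi} \in \hat{V}\}$ is a $\mathbb{C}$-basis of $\mathrm{im}\,(f|_U)^*$. But we have just shown that $(f|_U)^*$ is surjective, so $\mathrm{im}\,(f|_U)^* = \mathbb{C}^U$, and the given set is therefore a $\mathbb{C}$-basis of $\mathbb{C}^U$.

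There is no real obstacle here: the corollary is a direct specialization of Lemma \ref{f*} to the bijective case, together with the trivial identity $(\phi \circ f)|_U = \phi \circ (f|_U)$. The content of the statement is conceptual rather than computational, namely that when one selects a subset $U$ of $G$ mapped bijectively onto $V$ by $f$, the pullback of $\hat{V}$ along $f|_U$ provides an explicit $\mathbb{C}$-basis of $\mathbb{C}^U$ indexed by characters of $V$; this is exactly the feature that will be exploited in the later construction of monomial modules.
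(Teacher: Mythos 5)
Your proposal is correct and matches the paper's (implicit) reasoning: the corollary is stated without proof precisely because it is the direct specialization of Lemma \ref{f*} to the bijective map $f|_U\colon U \to V$, exactly as you argue. The identity $f^*(\phi)|_U = \phi\, f|_U$ and the identification $\mathrm{im}\,(f|_U)^* = \mathbb{C}^U$ are handled correctly, so nothing is missing.
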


\begin{Definition}[1-cocycle]\label{1-cocycle}
Let $V$ be an abelian group. Suppose $G$ acts on $V$,
$(A,g)\mapsto A\circ g{\ }(A\in V, g\in G)$,
as automorphisms.
Then a map $f \colon G \to  V$
is called a (right) \textbf{1-cocycle}
\index{1-cocycle}
of $G$ in $V$
if it satisfies
\begin{align}
 f(xg)=f(x)\circ g+f(g) \qquad \text{for all $x,g\in G$}.
\end{align}
\end{Definition}

In the rest of this section,
suppose that  $f \colon G \to  V$ is a surjective 1-cocycle
and $U$ is a subgroup of $G$ such that $f|_U$ is bijective
(i.e. $f|_U$ is a bijective 1-cocyle of $U$ in $V$).
Then $\mathbb{C}^V$, $\mathbb{C}V$,
$\mathbb{C}^U$, $\mathbb{C}U$
and $\mathrm{im}{\,f^*}$
are pairwise
$\mathbb{C}$-isomorphic:
\begin{align*}
  \xymatrix{
\mathbb{C}\{ \hat{\chi}f|_U \mid \hat{\chi} \in \hat{V}\}=
\mathrm{im}{\,f|_U^*}=
&\mathbb{C}^U  \ar[r]^{\varPhi}
& \mathbb{C}U \ar[d]^{f_{\mathbb{C}U}} \\
\mathbb{C}\{ \hat{\chi} \mid \hat{\chi} \in \hat{V}\}=
\mathbb{C}\hat{V}=
& \mathbb{C}^V  \ar[u]^{f|_U^*} \ar[d]_{f^*}  \ar[r]^{\varPsi}
& \mathbb{C}V \\
\mathbb{C}\{ \hat{\chi}f \mid \hat{\chi} \in \hat{V}\}=
& \mathrm{im}{\,f^*}
&  \\
 }
\end{align*}
where
$\varPsi\colon \hat{\chi}\mapsto \sum_{B\in V}{\hat{\chi}(B)B}$,
$\varPhi\colon \hat{\chi}f|_U\mapsto  \sum_{u\in U}{\hat{\chi}f(u)u}$,
$f^*\colon  \hat{\chi}\mapsto \hat{\chi}f$,
$f|_U^*\colon  \hat{\chi}\mapsto \hat{\chi}f|_U$,
and $f_{\mathbb{C}U}:u\mapsto f(u)$ is the extension of $f|_U$ to $\mathbb{C}U$ by linearity.
Let $\chi:=\hat{\chi}f$,
\begin{align*}
[\hat{\chi}]:= \sum_{B\in V}{{\hat{\chi}(B)}B}
\quad
\text{ and }
\quad
[\hat{\chi}f|_U]:=[{\chi}|_U]:=\sum_{u \in U}{{{\chi}(u)}u}.
\end{align*}
Then
$
\mathbb{C}V=\mathbb{C}\{ [\hat{\chi}]\mid \hat{\chi} \in \hat{V}\}$
{ and }
$\mathbb{C}U=\mathbb{C}\{ [\hat{\chi}f|_U] \mid \hat{\chi} \in \hat{V}\}$.

Let $V,W$ be $K$-vector spaces (or abelian groups)
and
$\varphi \colon V \to  W$
a $K$-isomorphism (or group isomorphism).
Suppose that $V$ is a $KG$-module $(V,.)_{KG}$
   and that the elements of $G$ act on $V$ as $K$-automorphisms (or as group automorphisms).
Define a new operation by
$ -*- \colon W\times G \to  W:(w,g)\mapsto w*g:=\varphi(\varphi^{-1}(w).g)$.
We extend the operation by linearity
\begin{align*}
w*\Big(\sum_{g\in G}{\alpha_g g}\Big)=\sum_{g\in G}{\alpha_g (w*g)}
\quad \text{ for all }w \in W, {\ }\sum_{g\in G}{\alpha_g g}\in KG,
\end{align*}
then
$W$ is also a $KG$-module $(W,*)_{KG}$,
the elements of $G$ act on $W$ as $K$-automorphisms (or as group automorphisms)
and $\varphi$ is a $KG$-module isomorphism.

\begin{Definition/Lemma}\label{Gstar} 
Let $K$ be an arbitrary field and $G$ be a finite group.
Define an operation by
$-*- \colon K^G\times G \to  K^G:(\varphi,g)\mapsto \varphi*g$,
where $(\varphi*g)(x)=\varphi(xg^{-1})$ for all $x\in G$.
Then $K^G$ becomes a right $KG$-module $(K^G,*)_{KG}\cong KG_{KG}$,
where $KG_{KG}$ is the regular right module.
\end{Definition/Lemma}

\begin{Definition/Lemma}\label{hatVdot}
 Let $V$ be an abelian group on which $G$ acts from the right
 as automorphisms.
 Then the group action of $G$ on $V$ induces
 a group action $-.-$
 of $G$ on $\hat{V}$ given by
 \begin{align*}
  -.- \colon \hat{V}\times G  \to  \hat{V}:(\hat{\chi}, g)\mapsto \hat{\chi}.g,
 \end{align*}
where $(\hat{\chi}.g)(A)=\hat{\chi}(A\circ g^{-1})$ for all $A\in V$.
\end{Definition/Lemma}

\begin{Lemma}
\label{chi*g}
Let $f \colon G \to  V$ be a surjective 1-cocycle.
Then for $\hat{\chi} \in \hat{V}$ and $g\in G$,
\begin{align*}
 (\hat{\chi}f) *g= (\hat{\chi}f)(g^{-1})\cdot (\hat{\chi}.g)f
                 = \chi(g^{-1})\cdot (\hat{\chi}.g)f
                 \in \mathrm{im}{\,f^*},
\end{align*}
where $-\cdot-$ is the scalar multiplication.
By extending the operation $-*-$ linearly,
$\mathrm{im}{\,f^*}$ becomes a monomial module
$(\mathrm{im}{\,f^*}, *)_{\mathbb{C}G}$.

\end{Lemma}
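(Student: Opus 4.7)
The plan is to verify the asserted identity pointwise on $G$ and then read off the module structure from Lemma \ref{f*}. Fix $\hat{\chi}\in\hat{V}$ and $g\in G$. Unfolding the definition of $*$ from Definition/Lemma \ref{Gstar}, for every $x\in G$ we have $\bigl((\hat{\chi}f)*g\bigr)(x) = (\hat{\chi}f)(xg^{-1}) = \hat{\chi}\bigl(f(xg^{-1})\bigr)$. The central step is to rewrite $f(xg^{-1})$ using the 1-cocycle property applied to the factorisation $(xg^{-1})\cdot g = x$, which gives $f(x)=f(xg^{-1})\circ g+f(g)$, hence (since $G$ acts on the abelian group $V$ by automorphisms) $f(xg^{-1}) = f(x)\circ g^{-1} - f(g)\circ g^{-1}$.

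Next I would apply $\hat{\chi}$ to this additive decomposition and use that $\hat{\chi}\colon V\to\mathbb{C}^{*}$ is a group homomorphism, splitting the value into two factors. By the definition of the induced action in Definition/Lemma \ref{hatVdot} these factors are $(\hat{\chi}.g)\bigl(f(x)\bigr)$ and $(\hat{\chi}.g)\bigl(f(g)\bigr)^{-1}$. The first factor is exactly $\bigl((\hat{\chi}.g)f\bigr)(x)$, so only the second factor needs to be identified with $\chi(g^{-1})$. For this I would specialise the 1-cocycle identity to $(g^{-1})\cdot g=1$: one first gets $f(1)=0$ from $f(1\cdot 1)=f(1)\circ 1+f(1)$, and then $f(g^{-1})\circ g+f(g)=0$, i.e.\ $f(g^{-1})=-f(g)\circ g^{-1}$. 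Applying $\hat{\chi}$ gives $\chi(g^{-1})=\hat{\chi}(f(g^{-1}))=(\hat{\chi}.g)(f(g))^{-1}$, which is the scalar we want. Combining these steps yields $\bigl((\hat{\chi}f)*g\bigr)(x)=\chi(g^{-1})\cdot \bigl((\hat{\chi}.g)f\bigr)(x)$ for all $x\in G$, which is the claimed formula.

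Having established this pointwise identity, the rest is essentially formal. Since $\hat{\chi}.g\in\hat{V}$, we have $(\hat{\chi}.g)f\in\mathrm{im}\,f^*$, and therefore $(\hat{\chi}f)*g\in\mathrm{im}\,f^*$. Lemma \ref{f*} tells us that $\{\hat{\chi}f \mid \hat{\chi}\in\hat{V}\}$ is a $\mathbb{C}$-basis of $\mathrm{im}\,f^*$, and the computation just performed shows that the right multiplication by $g$ sends each basis vector to a nonzero scalar multiple of another basis vector. Extending $*$ by $\mathbb{C}$-linearity in the first argument and by linearity over $\mathbb{C}G$ in the second, we obtain a well-defined right $\mathbb{C}G$-module structure on $\mathrm{im}\,f^*$, which is by construction monomial with respect to the basis $\{\hat{\chi}f\}$.

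There is no real obstacle: the argument is a bookkeeping exercise in translating between the additive notation for $V$, the multiplicative values of $\hat{\chi}$, and the right action $\circ$ of $G$ on $V$. The only point that requires a small amount of care is tracking the minus sign coming from the cocycle, and confirming that the scalar on the right-hand side genuinely equals $\chi(g^{-1})$ rather than, say, $\chi(g)^{-1}$; once $f(1)=0$ and $f(g^{-1})=-f(g)\circ g^{-1}$ are in hand, this is immediate.
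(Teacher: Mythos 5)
Your proof is correct and follows essentially the same route as the paper: a pointwise verification of $((\hat{\chi}f)*g)(x)$ using the 1-cocycle identity together with the definition of $\hat{\chi}.g$, followed by the formal observation that $\mathrm{im}\,f^*$ is spanned by the basis $\{\hat{\chi}f \mid \hat{\chi}\in\hat{V}\}$ which $*g$ permutes up to nonzero scalars. The only difference is a small detour: the paper expands $f(xg^{-1})=f(x)\circ g^{-1}+f(g^{-1})$ directly, so the scalar $\chi(g^{-1})$ appears immediately, whereas you recover the same scalar via $f(1)=0$ and $f(g^{-1})=-f(g)\circ g^{-1}$.
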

\begin{proof}
 Let $\hat\chi\in \hat{V}$ and $g\in G$.
 Then for $x\in G$,
$((\hat{\chi}f)*g)(x)
     = (\hat{\chi}f)(xg^{-1})
 \stackrel{\ref{1-cocycle}}{=} \hat{\chi}(f(x)\circ g^{-1}+f(g^{-1}))
     =(\hat{\chi}f)(g^{-1}) \cdot \hat{\chi}(f(x)\circ g^{-1})
 \stackrel{\ref{hatVdot}}{=} (\hat{\chi}f)(g^{-1})\cdot  (\hat{\chi}.g)(f(x))
     = (\hat{\chi}f)(g^{-1})\cdot  (\big(\hat{\chi}.g\big)f)(x)
     = \chi(g^{-1})\cdot (\hat{\chi}.g)f$.
 Thus $(\hat{\chi}f) *g= (\hat{\chi}f)(g^{-1})\cdot (\hat{\chi}.g)f$.
\end{proof}

\begin{Theorem}
[Monomial $\mathbb{C}G$-modules, {[Jedlitschky, \cite[2.1.11]{Markus1}]} ]
\label{Markus, monomial theorem}
Let $f \colon G \to  V$ be a surjective 1-cocycle,
and $U\leqslant G$ such that $f|_U$ is bijective.
Then the $\mathbb{C}$-vector spaces
$\mathbb{C}^V$, $\mathbb{C}^U$,
$\mathbb{C}V$ and $\mathbb{C}U$
can be made into
monomial $\mathbb{C}G$-modules
by extending the following operations linearly:
for all $\hat{\chi}\in \hat{V}$ and $g\in G$, we have that
\begin{alignat*}{2}
\hat{\chi}*g:=&  \hat{\chi}\left(f(g^{-1})\right) \cdot \hat{\chi}.g,
&\qquad
(\hat{\chi}f|_U)*g:=&  \hat{\chi}\left(f(g^{-1})\right)\cdot (\hat{\chi}.g)f|_U, \\
[\hat{\chi}]*g:=&
          \hat{\chi}\left(f(g^{-1})\right)\cdot [\hat{\chi}.g],
& \qquad
[\hat{\chi}f|_U]*g:=&
          \hat{\chi}\left(f(g^{-1})\right)\cdot [(\hat{\chi}.g)f|_U],
\end{alignat*}
and that
$(\mathbb{C}^V,*)_{\mathbb{C}G}$,
$(\mathbb{C}V,*)_{\mathbb{C}G}$,
$(\mathbb{C}^U,*)_{\mathbb{C}G}$
and
$(\mathbb{C}U,*)_{\mathbb{C}G}$
are isomorphic to
$(\mathrm{im}{\,f^*},*)_{\mathbb{C}G}$.
We say these ${\mathbb{C}G}$-modules \textbf{arise from the 1-cocycle} $f$.
\end{Theorem}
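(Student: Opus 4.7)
The plan is to leverage Lemma~\ref{chi*g}, which already establishes that $(\mathrm{im}\,f^*,*)_{\mathbb{C}G}$ is a monomial $\mathbb{C}G$-module: the set $\{\hat{\chi}f : \hat{\chi}\in\hat{V}\}$ is a $\mathbb{C}$-basis of $\mathrm{im}\,f^*$ by Lemma~\ref{f*}, and each $g\in G$ sends the basis vector $\hat{\chi}f$ to the nonzero scalar $\chi(g^{-1})=\hat{\chi}(f(g^{-1}))\in\mathbb{C}^{*}$ times the other basis vector $(\hat{\chi}.g)f$, where $\hat{\chi}.g\in\hat{V}$ by Definition/Lemma~\ref{hatVdot}. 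The remaining task is to transport this module structure across the four $\mathbb{C}$-isomorphisms $f^{*}$, $f|_U^{*}$, $\varPsi$, $\varPhi$ displayed in the diagram preceding the theorem, and to verify that the induced actions coincide with the four explicit formulas in the statement.

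First I would invoke the general transport-of-structure principle spelled out just before Definition/Lemma~\ref{Gstar}: given a $\mathbb{C}$-isomorphism $\varphi\colon X\to Y$ and a $\mathbb{C}G$-module structure $(X,.)_{\mathbb{C}G}$, the rule $y*g:=\varphi\bigl(\varphi^{-1}(y).g\bigr)$ defines a $\mathbb{C}G$-module structure on $Y$ with respect to which $\varphi$ is a module isomorphism. Applying this to the four isomorphisms $f^{*}\colon\mathbb{C}^V\to\mathrm{im}\,f^{*}$, $f|_U^{*}\colon\mathbb{C}^V\to\mathbb{C}^U$ (composed into $\mathrm{im}\,f^{*}$ through $f^{*}$ if desired), $\varPsi\colon\mathbb{C}^V\to\mathbb{C}V$ and $\varPhi\colon\mathbb{C}^U\to\mathbb{C}U$, each of the four vector spaces inherits a $\mathbb{C}G$-module structure that is isomorphic to $(\mathrm{im}\,f^{*},*)_{\mathbb{C}G}$ by construction.

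Second, I would verify the four formulas one by one, each an immediate computation. For $\mathbb{C}^V$: since $f^{*}$ is $\mathbb{C}$-linear and $f^{*}(\hat{\chi})=\hat{\chi}f$, the transported action satisfies
\[
\hat{\chi}*g \;=\; (f^{*})^{-1}\bigl((\hat{\chi}f)*g\bigr) \;\stackrel{\ref{chi*g}}{=}\; (f^{*})^{-1}\!\Bigl(\hat{\chi}\bigl(f(g^{-1})\bigr)\cdot(\hat{\chi}.g)f\Bigr) \;=\; \hat{\chi}\bigl(f(g^{-1})\bigr)\cdot\hat{\chi}.g,
\]
exactly as asserted. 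The three remaining cases are completely parallel: apply $f|_U^{*}$, $\varPsi$, or $\varPhi$ respectively to the formula of Lemma~\ref{chi*g}, and use linearity together with the definitions $f|_U^{*}(\hat{\chi})=\hat{\chi}f|_U$, $\varPsi(\hat{\chi})=[\hat{\chi}]$, $\varPhi(\hat{\chi}f|_U)=[\hat{\chi}f|_U]$.

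The proof is essentially bookkeeping, so no step presents a serious obstacle; the only care needed is in keeping the four parallel transports straight and checking monomiality of each basis. Monomiality in each case reduces to the two facts already noted: $\hat{\chi}.g$ lies again in the relevant basis (i.e.\ in $\hat{V}$) by Definition/Lemma~\ref{hatVdot}, and the coefficient $\hat{\chi}(f(g^{-1}))$ is a nonzero complex number as a value of a linear character on an abelian group.
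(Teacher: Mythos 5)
Your proof is correct and is exactly the argument the paper sets up: the paper itself states this theorem with only a citation to Jedlitschky \cite[2.1.11]{Markus1} and no written proof, but its preceding material (Lemma \ref{chi*g}, the transport-of-structure remark before \ref{Gstar}, Lemma \ref{f*}, Corollary \ref{f|U*}, and the diagram of isomorphisms $f^{*}$, $f|_U^{*}$, $\varPsi$, $\varPhi$) is assembled precisely as you do. Your verification of the four formulas and of monomiality (the coefficient $\hat{\chi}(f(g^{-1}))\in\mathbb{C}^{*}$ and $\hat{\chi}.g\in\hat{V}$) fills in the intended bookkeeping correctly.
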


\begin{Corollary}[Monomial $\mathbb{C}U$-modules]
The vector spaces $\mathbb{C}U$, $\mathbb{C}V$,
 $\mathbb{C}^U$,  $\mathbb{C}^V$,
 and $\mathrm{im}{\,f^*}$
can be made into
monomial isomorphic $\mathbb{C}U$-modules
by extending the restriction of the operations $-*-$ linearly.
In particular,
the operation $-*-$ of $U$ on $\mathbb{C}U$
is the usual right operation of $U$ on $\mathbb{C}U$, i.e.
for all $\hat{\chi}\in \hat{V}$ and $x\in U$, we obtain
\begin{align*}
 (\sum_{u\in U}{\hat{\chi}f(u)u})*x
 =\hat{\chi}\left(f(x^{-1})\right)\cdot (\sum_{u\in U}{(\hat{\chi}.x)f(u)u})
 =\sum_{u\in U}{\hat{\chi}f(u)ux},
\end{align*}
so $(\mathbb{C}U,*)_{\mathbb{C}U}=\mathbb{C}U_{\mathbb{C}U}$.
\end{Corollary}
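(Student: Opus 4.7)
The plan is to obtain this corollary as a direct restriction of Theorem \ref{Markus, monomial theorem} from $G$ down to the subgroup $U$, supplemented by an explicit computation that identifies the restricted $\ast$-operation on $\mathbb{C}U$ with the usual right regular action.

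First I would observe that every $\mathbb{C}G$-module becomes a $\mathbb{C}U$-module by restriction, and every $\mathbb{C}G$-isomorphism is automatically a $\mathbb{C}U$-isomorphism. Hence the chain of isomorphisms between $\mathbb{C}^V$, $\mathbb{C}^U$, $\mathbb{C}V$, $\mathbb{C}U$ and $\mathrm{im}\,f^\ast$ supplied by the theorem carries over verbatim. Monomiality over $\mathbb{C}U$ follows because the action of each $x \in U \subseteq G$ on a basis element of the theorem multiplies it by a nonzero scalar and sends it to another basis element, which is precisely the definition of a monomial action.

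Second, to verify the explicit formula, I would start from the rule supplied by the theorem, namely
\[
[\hat\chi f|_U]\ast x \;=\; \hat\chi\!\left(f(x^{-1})\right)\cdot [(\hat\chi.x)\,f|_U]
\]
for $x \in U$, and expand both bracketed sums. The left hand side equals $\bigl(\sum_{u\in U}\hat\chi f(u)\,u\bigr)\ast x$, while the right hand side becomes $\hat\chi(f(x^{-1}))\cdot\sum_{u\in U}(\hat\chi.x)(f(u))\,u$. To match this with the target expression $\sum_{u\in U}\hat\chi f(u)\,ux$, I would substitute $u \mapsto ux^{-1}$, which is a bijection on $U$, so the claim reduces to the pointwise identity
\[
\hat\chi\!\left(f(x^{-1})\right)\cdot(\hat\chi.x)(f(u)) \;=\; \hat\chi f(ux^{-1})
\]
for every $u \in U$. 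Unpacking $(\hat\chi.x)(f(u))=\hat\chi(f(u)\circ x^{-1})$ via Definition/Lemma \ref{hatVdot} and using that $\hat\chi$ is a homomorphism into $\mathbb{C}^\ast$ rewrites the left hand side as $\hat\chi\bigl(f(u)\circ x^{-1}+f(x^{-1})\bigr)$; the 1-cocycle property (Definition \ref{1-cocycle}) applied with $g=x^{-1}$ gives $f(ux^{-1})=f(u)\circ x^{-1}+f(x^{-1})$, and the identity follows.

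The only real obstacle is bookkeeping: keeping straight the index substitution $u\mapsto ux^{-1}$ and being careful about whether $x$ or $x^{-1}$ appears inside $f$ and inside the twisted character $\hat\chi.x$. No genuinely new idea beyond Theorem \ref{Markus, monomial theorem} and the 1-cocycle relation is needed, so once the bijectivity of $f|_U$ is used to guarantee that $\{u\mid u\in U\}$ matches $\{\hat\chi f|_U:\hat\chi\in\hat V\}$ up to the $\mathbb{C}$-isomorphism $\varPhi$, the conclusion $(\mathbb{C}U,\ast)_{\mathbb{C}U}=\mathbb{C}U_{\mathbb{C}U}$ is immediate.
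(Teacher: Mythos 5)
Your proposal is correct and follows exactly the route the paper intends: the corollary is left as an immediate consequence of Theorem \ref{Markus, monomial theorem} by restricting the $\mathbb{C}G$-structure to $\mathbb{C}U$, and your verification of the identity $\hat\chi\left(f(x^{-1})\right)\cdot(\hat\chi.x)(f(u))=\hat\chi f(ux^{-1})$ via Definition/Lemma \ref{hatVdot} and the 1-cocycle relation is precisely the computation needed to identify the restricted $*$-action with right multiplication on the basis $\{[\hat\chi f|_U]\mid \hat\chi\in\hat V\}$ of $\mathbb{C}U$.
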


\begin{Lemma}
Let $H:=\{g \in G\mid f(g)=0\}$. Then
$H \cap U =\{ 1 \}$ and $G=HU$.
\end{Lemma}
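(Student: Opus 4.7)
The plan is to leverage two facts: the cocycle identity from Definition \ref{1-cocycle}, and the bijectivity of $f|_U$. As a warm-up I would first establish that $f(1)=0$. Plugging $x=g=1$ into the cocycle relation gives $f(1)=f(1)\circ 1+f(1)=f(1)+f(1)$ in the abelian group $V$, which forces $f(1)=0$ and hence $1\in H$.

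For the intersection claim $H\cap U=\{1\}$: any $g\in H\cap U$ satisfies $f(g)=0=f(1)$ with both $g,1\in U$, so the injectivity of $f|_U$ immediately yields $g=1$.

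For the factorization $G=HU$: given $g\in G$, surjectivity of $f|_U$ produces a (necessarily unique) $u\in U$ with $f(u)=f(g)$, and the idea is that $gu^{-1}$ should lie in $H$. Writing $g=(gu^{-1})\cdot u$ and applying the cocycle identity gives
\[
f(g)=f(gu^{-1})\circ u+f(u),
\]
so $f(gu^{-1})\circ u=0$. Since $G$ acts on $V$ by automorphisms, right-acting by $u$ on $V$ is a bijection, so $f(gu^{-1})=0$; that is, $gu^{-1}\in H$ and therefore $g=(gu^{-1})u\in HU$.

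There is no real obstacle here: the only subtle point is remembering that the $G$-action on $V$ is by \emph{automorphisms}, so acting by a fixed $u$ has trivial kernel and cannot absorb a nonzero vector. Everything else is an immediate consequence of the cocycle identity combined with the bijectivity of $f|_U$.
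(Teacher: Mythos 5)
Your proof is correct and follows essentially the same route as the paper: use surjectivity of $f|_U$ to find $u$ with $f(u)=f(g)$, then apply the cocycle identity to show the complementary factor lies in $H$, and use injectivity of $f|_U$ together with $f(1)=0$ for the intersection. Your version is in fact slightly more streamlined — you apply the argument directly to $g$ rather than passing through coset representatives of $H$, and you conclude $f(gu^{-1})=0$ from the fact that $u$ acts on $V$ as an automorphism, where the paper instead invokes the cocycle identity once more via $f(u)\circ u^{-1}+f(u^{-1})=f(1)=0$.
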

\begin{proof}
Since $H\leqslant G$ and $U \leqslant G$, we have $G \supseteq  HU$.

Let $X$ be a complete set of right coset representatives of $H$ in $G$.
If $g\in G$, then there exist $h\in H$  and $x\in X$ such that $g=hx$.
We have $f(g)=f(x)\in V$.
Then there exists $u\in U$ such that $f(u)=f(x)$ since $f|_U$ is bijective.
We know $f(xu^{-1})=f(x)\circ u^{-1}+f(u^{-1})
                   =f(u)\circ u^{-1}+f(u^{-1})
                   =f(1)=0$,
so $x=h_x u$ for some $h_x\in H$. Thus $g=hh_xu\in HU$, i.e. $G \subseteq HU$.
Therefore $G=HU$.

If $g\in H \cap U$, then $f(g)=0=f|_U(g)$.
So $g=1_G$ since $f|_U$ is bijective.
\end{proof}

\begin{Proposition}[see {\cite[2.8]{DG2016}}]
Let
\begin{align*}
 e:=& \sum_{h\in H}{h},\quad
 \tau_{e}:=  \sum_{h\in H}{\tau_h}\quad
 \text{ and}
 \quad
 [{\chi}]:=[\hat{\chi}f]:=\sum_{g\in G}{\chi(g)g} \quad \text{ for all } \hat{\chi}\in \hat{V},
 \end{align*}
and $\mathbb{C}_{H}=\mathbb{C}\{e\}$ be a trivial $H$-module.
Then
\begin{align*}
 \mathrm{Ind}_H^G \mathbb{C}_H
 =& e\mathbb{C}G
 =\mathbb{C}\{eu \mid u\in U\}
 =e\mathbb{C}U
 =\mathbb{C}\{e[\chi|_U] \mid \hat{\chi}\in \hat{V}\} \\
 =& \mathbb{C}\{[\chi] \mid \hat{\chi}\in \hat{V}\}
 \quad (\text{as $\mathbb{C}$-vector space}),\\
 \mathrm{im}{\,f^*}=& \mathbb{C}\{\chi \mid \hat{\chi}\in \hat{V}\}
                      = \mathbb{C}\{\tau_e*[\chi|_U] \mid \hat{\chi}\in \hat{V}\}
                      =\tau_e*\mathbb{C}U  \\
                   = & \mathbb{C}\{\tau_e*u \mid u\in U\}
                   =\tau_e*\mathbb{C}G
                   =\tau_e\mathbb{C}^G
 \quad (\text{as $\mathbb{C}$-vector space}).
\end{align*}
In particular,
$(\mathrm{im}{\,f^*},*)_{\mathbb{C}G}
  \cong \mathrm{Ind}_H^G \mathbb{C}_H$
  and
$\chi=\tau_e*[\chi|_U]$.
\end{Proposition}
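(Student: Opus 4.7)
The plan is to exploit the decomposition $G=HU$ with $H\cap U=\{1\}$ from the preceding lemma, together with the $\mathbb{C}G$-module isomorphism $\varPhi\colon(\mathbb{C}^G,*)\to\mathbb{C}G_{\mathbb{C}G}$ of Definition/Lemma \ref{Gstar}, under which $\tau_e\mapsto e$. This lets me prove the two displayed chains in parallel: the second is obtained from the first by applying $\varPhi^{-1}$.

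The key computational input is the 1-cocycle identity applied with $f(h)=0$ for $h\in H$:
\begin{align*}
\chi(hu)=\hat\chi(f(hu))=\hat\chi(f(h)\circ u+f(u))=\hat\chi(f(u))=\chi(u)
\qquad (h\in H,\ u\in U).
\end{align*}
Summing over $G=HU$ gives $[\chi]=\sum_{h\in H,\,u\in U}\chi(hu)\,hu=\sum_{u\in U}\chi(u)\,eu=e[\chi|_U]$. For the first chain, the standard realisation of the induced module of a trivial one-dimensional representation identifies $\mathrm{Ind}_H^G\mathbb{C}_H$ with $e\mathbb{C}G$; uniqueness of the decomposition $g=hu$ shows that the elements $\{eu\mid u\in U\}$ are linearly independent, being supported on the distinct right cosets $Hu$, so $e\mathbb{C}G=e\mathbb{C}U=\mathbb{C}\{eu\mid u\in U\}$. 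Finally, Corollary \ref{f|U*} provides that $\{[\chi|_U]\mid\hat\chi\in\hat V\}$ is a $\mathbb{C}$-basis of $\mathbb{C}U$; left-multiplication by $e$ turns it into a basis of $e\mathbb{C}U$, which by the cocycle computation equals $\{[\chi]\mid\hat\chi\in\hat V\}$.

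For the second chain I transport the first through $\varPhi^{-1}$: since $\varPhi^{-1}$ is $\mathbb{C}G$-equivariant and sends $e$ to $\tau_e$, the identities $\tau_e*\mathbb{C}G=\tau_e*\mathbb{C}U=\mathbb{C}\{\tau_e*u\mid u\in U\}$ follow immediately. By Lemma \ref{f*}, $\mathrm{im}\,f^*=\mathbb{C}\{\chi\mid\hat\chi\in\hat V\}$, so the remaining identification reduces to showing $\chi=\tau_e*[\chi|_U]$ in $\mathbb{C}^G$. Direct calculation using $(\tau_e*u)(x)=\tau_e(xu^{-1})=\sum_{h\in H}\delta_{h,\,xu^{-1}}$ yields
\begin{align*}
\tau_e*[\chi|_U]=\sum_{u\in U}\chi(u)\sum_{h\in H}\tau_{hu}=\sum_{g\in G}\chi(g)\tau_g=\chi,
\end{align*}
where the middle equality uses the unique decomposition $G=HU$ and $\chi(hu)=\chi(u)$; the final $\mathbb{C}G$-module isomorphism $(\mathrm{im}\,f^*,*)_{\mathbb{C}G}\cong\mathrm{Ind}_H^G\mathbb{C}_H$ is then the restriction of $\varPhi$.

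The main obstacle is merely bookkeeping: $[\chi|_U]$ lives in $\mathbb{C}U\subseteq\mathbb{C}G$ while $\tau_e$ lives in $\mathbb{C}^G$, so $\tau_e*[\chi|_U]$ refers to the right $\mathbb{C}G$-module structure on $\mathbb{C}^G$ from Definition/Lemma \ref{Gstar}; once this is kept straight, each individual equality collapses to a one-line consequence of the cocycle identity and $G=HU$.
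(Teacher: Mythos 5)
Your proposal is correct and follows essentially the same route as the paper: the heart of both arguments is the identity $\chi=\tau_e*[\chi|_U]$, obtained from the cocycle relation $\chi(hu)=\chi(u)$ (since $f(h)=0$ for $h\in H$), the unique decomposition $G=HU$ with $H\cap U=\{1\}$, and the regular module structure $\tau_h*u=\tau_{hu}$ transported by $\varPhi$. You merely spell out the surrounding chain of equalities (bases of $e\mathbb{C}U$ and $\tau_e*\mathbb{C}U$, the realisation of $\mathrm{Ind}_H^G\mathbb{C}_H$ as $e\mathbb{C}G$) which the paper compresses into the single remark that proving $\chi=\tau_e*[\chi|_U]$ suffices.
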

\begin{proof}
It is enough to prove that $\chi=\tau_e*[\chi|_U]$. We have
\begin{align*}
 \chi=&\sum_{g\in G}{\chi(g)\tau_g}
     =\sum_{h\in H}\sum_{u\in U}{\chi(hu)\tau_{hu}}
     =\sum_{h\in H}\sum_{u\in U}{\chi(u)\tau_{h}*u}\\
     =&(\sum_{h\in H}\tau_h)*(\sum_{u\in U}{\chi(u)u})
     =\tau_e*[\chi|_U].
\end{align*}
\end{proof}

Now we give a summary of the isomorphic
monomial $\mathbb{C}G$-modules
and the $\mathbb{C}$-bases:
\begin{align*}
 \xymatrix{
 {\substack{\mathbb{C}\{ \chi|_U \mid \hat{\chi}\in \hat{V}\}=
             }}
&  (\mathbb{C}^U, *)_{\mathbb{C}G} \ar[r]^{\varPhi}
&  (\mathbb{C}U, *)_{\mathbb{C}G} \ar[d]^{f_{\mathbb{C}U}}
&  {\substack{=\mathbb{C}\{ [\chi|_U] \mid \hat{\chi}\in \hat{V}\}
     }}   \\
 {\substack{\mathbb{C}\hat{V}=}}
&  (\mathbb{C}^{V}, *)_{\mathbb{C}G} \ar[u]^{f|_U^*} \ar[d]_{f^*}  \ar[r]^{\varPsi}
&  (\mathbb{C}{V}, *)_{\mathbb{C}G} \ar[d]
&  {\substack{=\mathbb{C}\{[\hat{\chi}] \mid \hat{\chi}\in \hat{V}\}
     }} \\
{\substack{ \mathbb{C}\{ \tau_e*[\chi|_U] \mid \hat{\chi}\in \hat{V}\}=
           \\ \mathbb{C}\{ \chi \mid \hat{\chi}\in \hat{V}\}=}}
&  (\mathrm{im}{\,f^*},*)_{\mathbb{C}G} \ar[r]^{\varUpsilon_{\mathrm{im}{\,f^*}}}
&  \mathrm{Ind}_H^G{\mathbb{C}_H}
&  {\substack{=\mathbb{C}\{ e[\chi|_U] \mid \hat{\chi}\in \hat{V}\}\\
              =\mathbb{C}\{ [\chi] \mid \hat{\chi}\in \hat{V}\}\\}} \\
}
\end{align*}
where  $\varUpsilon\colon \mathbb{C}^G\to \mathbb{C}G
 :\tau \mapsto \sum_{g\in G}\tau(g)g$.

An example of the theory can be found in Section \ref{sec: monomial 2G2-module},
and $[\chi|_U]=[A]=[\chi_{-A}|_U]$ for all $A\in V$ in Theorem \ref{fund thm U-2G2}.



\section{Sylow $p$-subgroup ${^2{G}_2^{syl}(3^{2m+1})}$}
\label{sec:2G2-1}
In this section,
we construct a Sylow $3$-subgroup $^2{G}_2^{syl}(3^{2m+1})$
of the Ree group $^2{G}_2(3^{2m+1})$ (see \ref{sylow p subg-2G2}).

We recall the construction of the matrix Sylow $p$-subgroup ${G}_2^{syl}(q)$ of type $G_2$ (see \cite[\S 2]{sunG2}).

We firstly construct a matrix Lie algebra of type $G_2$
and a corresponding Chevalley basis.
Define the elements of $\mathrm{Mat}_{8\times 8}(\mathbb{C})$ as follows:
$ h_1:= e_{11}-e_{88}, \
 h_2:= e_{22}-e_{77}, \
 h_3:= e_{33}-e_{66}, \
 h_4:= e_{44}-e_{55}$.
A subspace of $\mathrm{Mat}_{8\times 8}(\mathbb{C})$ is
$\tilde{\mathcal{H}}:= \mathbb{C}
 \text{-span} \{h_1-h_2+2h_3,{\, }h_2-h_3\}
 =\{\sum_{i=1}^3\lambda_ih_i  \mid
\lambda_1-\lambda_2-\lambda_3=0,{\ }\lambda_i\in \mathbb{C}\}$.
Let $\tilde{\mathcal{H}}^*$ be the dual space of $ \tilde{\mathcal{H}}$,
$\tilde{h}:=\sum_{i=1}^3\lambda_ih_i  \in  \tilde{\mathcal{H}}$,
linear maps $\alpha \colon \tilde{\mathcal{H}} \to  \mathbb{C}
:\tilde{h}\mapsto \frac{\lambda_1-\lambda_2+2\lambda_3}{3}$
and
$\beta \colon \tilde{\mathcal{H}} \to  \mathbb{C}
:\tilde{h}\mapsto \lambda_2-\lambda_3$.
We set
$\Phi_{G_2}=\pm\{ \alpha,{\ } \beta,{\ }\alpha+\beta,
                  {\ }2\alpha+\beta, {\ }3\alpha+\beta, {\ }3\alpha+2\beta\}$,
and
$\Phi^{+}_{G_2}=\{ \alpha,{\ } \beta,{\ }\alpha+\beta,
                  {\ }2\alpha+\beta, {\ }3\alpha+\beta, {\ }3\alpha+2\beta\}$.
Let $\mathcal{V}_{G_2}$
be a $\mathbb{R}$-vector subspace of $\tilde{\mathcal{H}}^*$
spanned by $\Phi_{G_2}$,
 and become a Euclidean space (see \cite[\S 5.1]{Carter2005}.
Then $\Delta_{G_2}=\{\alpha,\  \beta\}$ is a basis of $\mathcal{V}_{G_2}$.
Define the elements of $\mathrm{Mat}_{8\times 8}(\mathbb{C})$ as follows:
\begin{alignat*}{2}
 e_\alpha:=& (e_{1,2}-e_{7,8})+(e_{3,4}-e_{5,6})+(e_{3,5}-e_{4,6}),
 & \quad
 e_{\beta}:=& e_{2,3}-e_{6 ,7},\\
 e_{\alpha+\beta}:=& -(e_{1,3}-e_{6,8})+(e_{2,4}-e_{5,7})+(e_{2,5}-e_{4,7}),
 & \qquad
 e_{3\alpha+\beta}:=& -(e_{1,6}-e_{3,8}),\\
 e_{2\alpha+\beta}:=& -(e_{1,4}-e_{5,8})-(e_{2,6}-e_{3,7})-(e_{1,5}-e_{4,8}),
 & \qquad
 e_{3\alpha+2\beta}:=& -(e_{1,7}-e_{2,8}),
\end{alignat*}
and $e_{-r}:=e_r^\top$ and $h_r:=[e_r, e_{-r}]$ for all $r\in \Phi^+$.
Then
a Lie algebra of type $G_2$ is determined, denoted by $\mathcal{L}_{G_2}$,
which has a Chevalley basis
$\{h_\alpha,{\,} h_\beta \}\cup \{e_r  \mid  r\in \Phi\}$
(see \cite[2.1]{sunG2}).
Let $r:=x_1\alpha+x_2\beta \in \mathcal{V}_{G_2}$, $s:=y_1\alpha+y_2\beta \in \mathcal{V}_{G_2}$.
Then we write
$ r\prec s$,
if $\sum_{i=1}^2{x_i}<\sum_{i=1}^2{y_i}$,
or if $\sum_{i=1}^2{x_i}= \sum_{i=1}^2{y_i}$ and
the first non-zero coefficient $x_i-y_i$ is positive.
The total order on $\Phi^{+}_{G_2}$ is determined:
$
 0\prec
 \alpha \prec \beta \prec
 \alpha+\beta \prec
 2\alpha+\beta \prec
 3\alpha+\beta \prec
 3\alpha+2\beta $.
The Lie algebra $\mathcal{L}_{G_2}$ has the following structure constants:
$N_{\alpha, \beta }=-1$,
$N_{\alpha, \alpha+\beta}=-2$,
$N_{\alpha, 2\alpha+\beta}=3$
and
$N_{\beta, 3\alpha+\beta}=1$.
In particular,
$N_{\alpha,\alpha+\beta}=-2N_{\beta,3\alpha+\beta}$
and $N_{\alpha,2\alpha+\beta}=3N_{\beta,3\alpha+\beta}$.

Set a matrix group
$
{\bar{G}_2}(q):= \left<{\,} \exp(te_r) {\, }\middle|{\, }
                       r\in \Phi_{G_2},{\ } t\in \mathbb{F}_q {\,}\right>$,
and the Chevalley group of type $\mathcal{L}_{G_2}$ over the field $\mathbb{F}_q$ is
$
G_2(q):= \left<{\,} \exp(t{\,}\mathrm{ad}{\,e_r})
                {\, }\middle|{\, } r\in \Phi_{G_2},{\ } t\in \mathbb{F}_q {\,}\right>$.
For all $r\in \Phi_{G_2}$ and $t\in \mathbb{F}_q$,
set
$ y_r(t):=\exp(te_r)=I_8+te_r+\frac{1}{2}t^2e_r^2$.
Let
 $y_1(t):= y_{\alpha}(t)$,
 $y_2(t):= y_{\beta}(t)$,
 $y_3(t):= y_{\alpha+\beta}(t)$,
$y_4(t):=y_{2\alpha+\beta}(t)$,
$y_5(t):=y_{3\alpha+\beta}(t)$,
$y_6(t):=y_{3\alpha+2\beta}(t)$.
The positive root subgroups of $G_2(q)$ are
$Y_i:=\left\{y_i(t) {\, }\middle|{\, } t\in \mathbb{F}_q \right\}$
for all $i=1,2,\dots,6$.

Let $t_i \in \mathbb{F}_{q}$ for all $i=1,2,\dots,6$
and
$[y_i(t_i), y_j(t_j)]:=y_i(t_i)^{-1}y_j(t_j)^{-1}y_i(t_i)y_j(t_j)$.
Then the non-trivial commutators are determined.
\begin{align*}
[y_1(t_1), y_2(t_2)]=& y_3(-t_2t_1)\cdot y_4(-t_2t_1^{2})
                                   \cdot y_5(t_2t_1^{3})
                                   \cdot y_6(-2t_2^2t_1^{3}),\\
[y_1(t_1), y_3(t_3)]=& y_4(-2t_1t_3)
                       \cdot y_5(3t_1^{2}t_3)
                       \cdot y_6(3t_1t_3^2),\\
[y_1(t_1), y_4(t_4)]=& y_5(3t_1t_4),\qquad
[y_3(t_3), y_4(t_4)]= y_6(3t_3t_4),\qquad
[y_2(t_2), y_5(t_5)]= y_6(t_2t_5).
\end{align*}
In particular, if $\mathrm{Char}{\,\mathbb{F}_q}=3$, then
\begin{align*}
[y_1(t_1), y_2(t_2)]
                    =& y_3(-t_2t_1)\cdot y_4({-t_2t_1^{2}})
                                   \cdot y_5({t_2t_1^{3}})
                                   \cdot y_6({t_2^2t_1^{3}}),\\
[y_1(t_1), y_3(t_3)]=& y_4(-2t_1t_3)
                    = y_4(t_1t_3),\qquad
[y_2(t_2), y_5(t_5)]= y_6(t_2t_5).
\end{align*}

 Let
 $y(t_1, t_2, t_3, t_4, t_5, t_6):=
   y_2(t_2)y_1(t_1)y_3(t_3)y_4(t_4)y_5(t_5)y_6(t_6)$
for all $t_i\in \mathbb{F}_q$ $(i=1,2,\dots,6)$.
Then a matrix Sylow $p$-subgroup ${G}_2^{syl}(q)$ of $G_2(q)$ (see \cite[2.6]{sunG2})
is
\begin{align*}
  {G}_2^{syl}(q)
 :=\left\{y(t_1, t_2, t_3, t_4, t_5, t_6)
 \mid
t_1, t_2, t_3, t_4, t_5, t_6 \in {\mathbb{F}_{q}}\right\}.
\end{align*}

Note that
the signs of the structure constants and the Chevalley basis of the Lie algebra $\mathcal{L}_{G_2}$
in this paper
are different from those in \cite[\S 2]{sunG2}:
the structure constant
$N_{\alpha, \alpha+\beta}=-2$ in this paper
while
$N_{\alpha, \alpha+\beta}=2$ in \cite{sunG2};
$e_{2\alpha+\beta}$,
$e_{3\alpha+\beta}$
and
$e_{3\alpha+2\beta}$
are in the Chevalley basis
in this paper
while $-e_{2\alpha+\beta}$,
$-e_{3\alpha+\beta}$
and
$-e_{3\alpha+2\beta}$
are in the Chevalley basis in \cite{sunG2}.
However, the Sylow $p$-subgroup $G_2^{syl}(q)$ and
the root subgroups $Y_i$  $(i=1,2,\dots,6)$
of the Chevalley group $G_2(q)$
are as same as those in \cite{sunG2}.


With the above construction for type $G_2$,
we can determine a matrix Sylow $3$-subgroup $^2{G}_2^{syl}(q)$
for the twisted type $^2{G}_2$.

Let $p:=3$, $q:=3^{2m+1} {\ }(m\in \mathbb{N})$ and $\theta:=3^m$.
There is a field automorphism $F_\theta$ of $G_2(q)$
sending $y_r(t)$ to $y_r(t^\theta)=y_r(t^{3^m})$
for all $r\in \mathrm{\Phi}_{G_2}$.
Let $\rho \colon r\mapsto \bar{r}$ be a non-trivial symmetry of the Dynkin diagram of type $G_2$
(interchanging $\alpha$ and $\beta$).
For every $r\in \Phi_{G_2}$,
$\bar{r}$ is obtained by reflecting $r$ in the line bisecting $\alpha$ and $\beta$.
\begin{center}
\begin{tikzpicture}[scale=1.0]
 \node (0) at (0:0) {};
 \node (a)   at (0:1) {$\bullet$};
 \node (2ab) at (60:1) {$\bullet$};
 \node (ab)  at (120:1) {$\bullet$};

 \node (-a)   at (180:1) {$\bullet$};
 \node (-2ab) at (-120:1) {$\bullet$};
 \node (-ab)  at (-60:1) {$\bullet$};

 \node (3ab)  at (30:1.732) {$\bullet$};
 \node (3a2b) at (90:1.732) {$\bullet$};
 \node (b)    at (150:1.732) {$\bullet$};

 \node (-3ab)  at (-150:1.732) {$\bullet$};
 \node (-3a2b) at (-90:1.732) {$\bullet$};
 \node (-b)    at (-30:1.732) {$\bullet$};

 \draw (-a.center) node[left] {\footnotesize $-\alpha$}
                   to  (a.center) node[right] {\footnotesize $\alpha$};
 \draw (b.center)  node[left] {\footnotesize $\beta$}
                   to  (-b.center) node[right] {\footnotesize $-\beta$};
 \draw (ab.center) node[above left] {\footnotesize $\alpha+\beta$}
                   to  (-ab.center) node[below right] {\footnotesize $-\alpha-\beta$};
 \draw (3a2b.center) node[above] {\footnotesize $3\alpha+2\beta$}
                     to  (-3a2b.center) node[below] {\footnotesize $-3\alpha-2\beta$};
 \draw (2ab.center) node[above right] {\footnotesize $2\alpha+\beta$}
                    to  (-2ab.center) node[below left] {\footnotesize $-2\alpha-\beta$};
 \draw (3ab.center) node[right] {\footnotesize $3\alpha+\beta$}
                    to  (-3ab.center) node[left] {\footnotesize $-3\alpha-\beta$};
 \draw (b.center) to (-a.center);
 \draw (b.center) to (ab.center);
 \draw (-b.center) to (a.center);
 \draw (-b.center) to (-ab.center);

 \draw (3a2b.center) to (ab.center);
 \draw (3a2b.center) to (2ab.center);
 \draw (-3a2b.center) to (-ab.center);
 \draw (-3a2b.center) to (-2ab.center);

 \draw (3ab.center) to (2ab.center);
 \draw (3ab.center) to (a.center);
 \draw (-3ab.center) to (-2ab.center);
 \draw (-3ab.center) to (-a.center);
 \node (c)   at (75:1.8)   {};
 \node (-c)  at (-105:1.8) {};
 \draw [dashed,thick] (c.center) to (-c.center);
\end{tikzpicture}
\end{center}
Let $\epsilon_i=\pm 1$ ($i=1,2,3,4$) satisfy that
$N_{\alpha, \beta}= \epsilon_1$,
$N_{\alpha, \alpha+\beta}= 2\epsilon_2$,
$N_{\alpha, 2\alpha+\beta}= 3\epsilon_3$
and
$N_{\beta, 3\alpha+\beta}= \epsilon_4$.
Since the structure constants of $\mathcal{L}_{G_2}$
satisfy
$ -\epsilon_2=\epsilon_3= \epsilon_4=1$,
by \cite[12.4.1]{Carter1}
the map
\begin{align*}
 {y_r(t)}\mapsto
{\left\{
\begin{array}{ll}
y_{\bar{r}}(t), & \bar{r} \text{ is short}\\
y_{\bar{r}}(t^{3}), & \bar{r} \text{ is long}\\
\end{array}
\right.}
\qquad \text{for all }r\in \Phi_{\mathrm{G}_2},{\ }t\in \mathbb{F}_q
\end{align*}
can be extended to a graph automorphism $\tilde{\rho}$ of $G_2(q)$.
If $F:=\tilde{\rho} {F_\theta}={F_\theta} \tilde{\rho}$,
then
\begin{align*}
 F \colon  G_2(q) \to  G_2(q):
 {y_r(t)}\mapsto
{\left\{
\begin{array}{ll}
y_{\bar{r}}(t^\theta), & \bar{r} \text{ is short}\\
y_{\bar{r}}(t^{3\theta}), & \bar{r} \text{ is long}\\
\end{array}
\right.                         }
\qquad \text{for all }r\in \Phi_{\mathrm{G}_2},{\ }t\in \mathbb{F}_q.
\end{align*}
For a subgroup $X$ of $G_2(q)$,
we write $X^F:=\{x\in X \mid F(x)=x\}$.
By \cite[\S 13.4]{Carter1}, $G_2(q)^F={^2}G_2(q)$
and ${G^{syl}_2(q)}^F$ is a subgroup of ${^2}G_2(q)$.
By \cite[\S 14]{Carter1},
we have that
${G^{syl}_2(q)}^F$ is also a Sylow $p$-subgroup of ${^2}G_2(q)$
and
that $ |^2{G}_2(q)|= q^3(q-1)(q^3+1)
     = 3^{6m+3}(3^{2m+1}-1)(3^{6m+3}+1)$.

\begin{Proposition}[Sylow $p$-subgroup $^2{G}_2^{syl}(3^{2m+1})$]
\label{sylow p subg-2G2}
Let $p=3$, $q=3^{2m+1} {\ }(m\in \mathbb{N})$ and $\theta=3^m$.
Then a matrix Sylow $3$-subgroup $^2{G}_2^{syl}(q)$ of the Ree group $^2{G}_2(q)$ is
\begin{align*}
^2{G}_2^{syl}(q): =&
\left\{
y_2({t_1^{3\theta}})\cdot
y_1({t_1})
y_3({t_3})
y_4({t_4})
\cdot
y_5({t_3^{3\theta}}+{t_1^{3\theta+3}})
y_6({t_4^{3\theta}}+{t_1^{6\theta+3}})
{\, }\middle|{\, }
t_1,t_3,t_4 \in \mathbb{F}_q\right\},
\end{align*}
where
\begin{align*}
& y_2({t_1^{3\theta}})\cdot
y_1({t_1})
y_3({t_3})
y_4({t_4})
\cdot
y_5({t_3^{3\theta}}+{t_1^{3\theta+3}})
y_6({t_4^{3\theta}}+{t_1^{6\theta+3}})\\
=&
\left(
\newcommand{\mc}[3]{\multicolumn{#1}{#2}{#3}}
\begin{array}{cccccccc}\cline{2-4}
\mc{1}{c|}{1}
& \mc{1}{c|}{\begin{array}{l}
              {t_1}
             \end{array} }
& \mc{1}{c|}{\begin{array}{l}
              {-t_3}
             \end{array}}
& \mc{1}{c|}{\begin{array}{l}
 t_1t_3\\
 -t_4
\end{array}}
& \begin{array}{l}
 t_1t_3\\
 -t_4
\end{array}
& \begin{array}{l}
 -t_1t_4\\
-t_1^{3\theta+3}\\
-t_3^{3\theta}
\end{array}
& \rule{0pt}{33pt}
\begin{array}{l}
 -t_1t_3^2\\
-t_3t_4\\
-t_1^{6\theta+3}\\
-t_4^{3\theta}
 \end{array}
& \begin{array}{l}
2t_1{t_3}{t_4}
+t_1^{6\theta+4}\\
+t_1t_4^{3\theta}
-t_3t_1^{3\theta+3}\\
-t_3^{3\theta+1}
-{t_4^2}
\end{array}\\\cline{2-4}
×
& {1}
& {t_1^{3\theta}}
& \begin{array}{l}
t_1^{3\theta+1}\\
+{t_3}
\end{array}
& {\begin{array}{l}
t_1^{3\theta+1}\\
+{t_3}
\end{array}}
& \begin{array}{l}
-t_1^{3\theta+2}\\
-{t_4}
\end{array}
& \begin{array}{l}
-2t_1^{3\theta+1}{t_3}\\
+t_1^{3\theta}{t_4}\\
-{t_3^2}\\
  \end{array}
& \rule{0pt}{37pt}
\begin{array}{l}
-t_1^{3\theta+2}t_3\\
+2t_1^{3\theta+1}{t_4}\\
+t_1^{3\theta}t_3^{3\theta}
+2{t_3}{t_4}\\
+t_4^{3\theta}
+2t_1^{6\theta+3}
\end{array}\\
× & × & {1} & {t_1}
& {t_1}
& {-{t_1^2}}
& \begin{array}{l}
-2{t_1}{t_3}\\
+{t_4}
  \end{array}
& \rule{0pt}{26pt}
\begin{array}{l}
  -{t_1^2}t_3
+2{t_1}{t_4}\\
+t_3^{3\theta}
+t_1^{3\theta+3}\\
  \end{array}\\
× & × & × & 1 & 0 & -{t_1} & \rule{0pt}{15pt} -{t_3}
& \rule{0pt}{13pt}
-{t_1}t_3+{t_4} \\
× & × & × & × & 1 & -{t_1} &-t_3
& -{t_1}t_3
    +t_4\\
× & × & × & × & × & 1
& -t_1^{3\theta}
& t_1^{3\theta+1}
    +t_3
\\
× & × & × & × & × & × & 1
& -t_1\\
× & × & × & × & × & × & × & 1\\
\end{array}
\right).
\end{align*}
\end{Proposition}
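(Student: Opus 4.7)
The proof has two parts: identifying which elements of $G_2^{syl}(q)$ are fixed by $F$, then multiplying the six root-subgroup factors to obtain the displayed $8\times 8$ matrix. The plan for the first part rests on the fact stated just above the proposition that $G_2^{syl}(q)^F$ is a Sylow $3$-subgroup of ${}^2G_2(q)$ of order $q^3$: it suffices to exhibit $q^3$ fixed elements in the claimed parametric form.

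Reading the dashed axis in the figure, the pairing under $\bar{\cdot}$ is $\bar\alpha=\beta$, $\overline{\alpha+\beta}=3\alpha+\beta$, $\overline{2\alpha+\beta}=3\alpha+2\beta$, each pairing a short root with a long one. Combined with the rule twisting by $\theta$ or $3\theta$ according as the image root is short or long, the action of $F$ on the generators is $F(y_1(t))=y_2(t^{3\theta})$, $F(y_2(t))=y_1(t^\theta)$, $F(y_3(t))=y_5(t^{3\theta})$, $F(y_5(t))=y_3(t^\theta)$, $F(y_4(t))=y_6(t^{3\theta})$, $F(y_6(t))=y_4(t^\theta)$. Applying $F$ to a generic $x = y_2(t_2)y_1(t_1)y_3(t_3)y_4(t_4)y_5(t_5)y_6(t_6)$ and restoring canonical order via the Chevalley commutator relations in characteristic $3$ (using that $y_3, y_4$ commute with $y_5, y_6$, together with the four-term identity $[y_1(a),y_2(b)] = y_3(-ba)\,y_4(-ba^2)\,y_5(ba^3)\,y_6(b^2a^3)$), one obtains $F(x)$ as a canonical-form product in which the $y_1, y_2$ exponents are $t_2^\theta, t_1^{3\theta}$. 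Equating with $x$ then forces $t_2 = t_1^{3\theta}$ (the compatibility $t_1 = t_2^\theta = t_1^{3\theta^2} = t_1^q$ being automatic because $3\theta^2 = q$), and substituting back, the $Y_3$- and $Y_4$-coordinates determine $t_5^\theta$ and $t_6^\theta$; inverting the $\theta$-Frobenius by raising to the $3\theta$-th power and simplifying with $t_1^q = t_1$ yields $t_5 = t_3^{3\theta} + t_1^{3\theta+3}$ and $t_6 = t_4^{3\theta} + t_1^{6\theta+3}$, with $t_1, t_3, t_4 \in \mathbb{F}_q$ free. This produces exactly $q^3$ fixed elements, hence all of ${}^2G_2^{syl}(q)$.

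For the second part, I would multiply the six matrices $y_2(t_1^{3\theta})$, $y_1(t_1)$, $y_3(t_3)$, $y_4(t_4)$, $y_5(t_3^{3\theta}+t_1^{3\theta+3})$, $y_6(t_4^{3\theta}+t_1^{6\theta+3})$, each of the form $I_8 + te_{r_i} + \tfrac12 t^2 e_{r_i}^{\,2}$ with the $e_{r_i}$ as listed in Section \ref{sec:2G2-1}. Because each $e_{r_i}$ is nilpotent of small rank and most pairwise products $e_{r_i}e_{r_j}$ either vanish or reduce to another $e_r$, the product can be collected entry-by-entry; the most elaborate position is $(1,8)$, which picks up contributions from essentially every factor. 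Repeated use of $3 = 0$ and $-2 = 1$ in characteristic $3$ collapses the Chevalley-type cross-terms to the matrix displayed in the statement.

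The principal obstacle is the bookkeeping in the commutator rearrangement of $F(x)$: the four-term $[y_1,y_2]$ identity couples $t_3, t_4, t_5, t_6$ simultaneously to $(t_1, t_2)$ and must be composed correctly with the Frobenius twists $\theta$ and $3\theta$, so that a sign error or missing cross-term would propagate into the final formulas for $t_5$ and $t_6$. The matrix multiplication in the second part, though long, is essentially mechanical once the nilpotent products of the $e_r$ are tabulated.
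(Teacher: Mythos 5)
Your proposal is correct and follows essentially the same route as the paper: compute the action of $F$ on the root subgroup generators via the diagram symmetry and the short/long twist, apply $F$ to a generic element $y_2(t_2)y_1(t_1)y_3(t_3)y_4(t_4)y_5(t_5)y_6(t_6)$, restore canonical order with the characteristic-$3$ commutator relations (in particular the four-term $[y_1,y_2]$ relation), equate coordinates to force $t_2=t_1^{3\theta}$, $t_5=t_3^{3\theta}+t_1^{3\theta+3}$, $t_6=t_4^{3\theta}+t_1^{6\theta+3}$ with $t_1,t_3,t_4$ free (hence $q^3$ elements), and obtain the matrix by multiplying the six factors $I_8+te_r+\tfrac12 t^2e_r^2$. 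The only cosmetic difference is that you recover $t_5,t_6$ by inverting the $\theta$-Frobenius from the $Y_3,Y_4$-coordinates, whereas the paper reads them off the $Y_5,Y_6$-coordinates directly; the two are equivalent given $t^{3\theta^2}=t$.
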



\begin{proof}
We know $\mathrm{Char}{\,\mathbb{F}_q}=3$ and $t^{3\theta^2}=t$ for all $t\in \mathbb{F}_q$.
Let $t_i\in \mathbb{F}_q$ and $y(t_1,t_2,t_3,t_4,t_5,t_6)\in {G_2^{syl}(q)}^{F}$.
Then
\begin{align*}
& y(t_1,t_2,t_3,t_4,t_5,t_6)
=  F\big(y(t_1,t_2,t_3,t_4,t_5,t_6)\big)\\
=&  F\big(y_2(t_2)y_1(t_1)y_3(t_3)y_4(t_4)y_5(t_5)y_6(t_6)\big)
=  y_1(t_2^\theta)y_2(t_1^{3\theta})y_5(t_3^{3\theta})y_6(t_4^{3\theta})y_3(t_5^\theta)y_4(t_6^\theta)\\
{=}
 &  y_2(t_1^{3\theta})y_1(t_2^\theta)
    y_3(-t_1^{3\theta}t_2^{\theta})
    y_4(-t_1^{3\theta}t_2^{2\theta})
    y_5(t_1^{3\theta}t_2^{3\theta})
    y_6(t_1^{6\theta}t_2^{3\theta})
  \cdot y_3(t_5^\theta)y_4(t_6^\theta)y_5(t_3^{3\theta})y_6(t_4^{3\theta})\\
=& y\big(
    t_2^\theta,{\ }
    t_1^{3\theta},{\ }
    t_5^\theta-t_1^{3\theta}t_2^{\theta},{\ }
    t_6^\theta-t_1^{3\theta}t_2^{2\theta},{\ }
    t_3^{3\theta}+t_1^{3\theta}t_2^{3\theta},{\ }
    t_4^{3\theta}+t_1^{6\theta}t_2^{3\theta}
    \big).
\end{align*}
Thus,
\begin{alignat*}{3}
 t_1=& t_2^\theta,
& \quad
 t_3=& t_5^\theta-t_1^{3\theta}t_2^{\theta}=t_5^\theta-t_1^{3\theta+1},
& \quad
 t_5=& t_3^{3\theta}+t_1^{3\theta}t_2^{3\theta}=t_3^{3\theta}+t_1^{3\theta+3},\\
t_2=& t_1^{3\theta},
& \quad
t_4=& t_6^\theta-t_1^{3\theta}t_2^{2\theta}
                               =t_6^\theta-t_1^{3\theta+2},
& \quad
t_6=& t_4^{3\theta}+t_1^{6\theta}t_2^{3\theta}
                               =t_4^{3\theta}+t_1^{6\theta+3}.
\end{alignat*}
Hence
$ y(t_1,t_2,t_3,t_4,t_5,t_6)
= y\big(
    t_1,{\, }
    t_1^{3\theta},{\, }
    t_3,{\, }
    t_4,{\, }
    t_3^{3\theta}+t_1^{3\theta+3},{\,}
    t_4^{3\theta}+t_1^{6\theta+3}
    \big)$,
and $\big| {G_2^{syl}(q)}^{F} \big |=q^3$.

Therefore, $^2{G}_2^{syl}(q):={G_2^{syl}(q)}^{F}$ is a Sylow $p$-subgroup of $^2{G}_2(q)$.
We get the matrix form by calculation.
\end{proof}

\begin{Corollary}
 $^2{G}_2^{syl}(3^{2m+1}) \leqslant
  {G}_2^{syl}(3^{2m+1})$.
\end{Corollary}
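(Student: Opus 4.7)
The plan is to read the conclusion off directly from Proposition \ref{sylow p subg-2G2}, which already does all the work. By construction, $^2G_2^{syl}(q) := G_2^{syl}(q)^F$, so I would first note that this is the fixed-point set of the automorphism $F$ restricted to $G_2^{syl}(q)$, and then deduce both the set-theoretic inclusion and the subgroup property.

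For the set-theoretic inclusion, the parametrisation produced in Proposition \ref{sylow p subg-2G2} displays every element of $^2G_2^{syl}(q)$ as
\[
 y\bigl(t_1,\,t_1^{3\theta},\,t_3,\,t_4,\,t_3^{3\theta}+t_1^{3\theta+3},\,t_4^{3\theta}+t_1^{6\theta+3}\bigr)
\]
with $t_1,t_3,t_4\in \mathbb{F}_q$. Since each of the six coordinates lies in $\mathbb{F}_q$, the element is of the form $y(s_1,\ldots,s_6)$ with $s_i\in\mathbb{F}_q$, hence sits in $G_2^{syl}(q)$ by the definition of the latter recalled at the start of Section \ref{sec:2G2-1}. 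This yields $^2G_2^{syl}(q)\subseteq G_2^{syl}(q)$ as sets.

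To upgrade this to a subgroup inclusion, I would argue that $G_2^{syl}(q)$ is $F$-stable: the computation in the proof of Proposition \ref{sylow p subg-2G2} shows that $F$ sends the generic element $y(t_1,t_2,t_3,t_4,t_5,t_6)\in G_2^{syl}(q)$ to another element of the same parametrised shape, so $F\bigl(G_2^{syl}(q)\bigr)\subseteq G_2^{syl}(q)$. Since $F$ is a group automorphism of $G_2(q)$, its restriction to $G_2^{syl}(q)$ is an endomorphism, and the fixed-point set of an endomorphism of a group is always a subgroup: $1$ is fixed, and from $F(x)=x$, $F(y)=y$ one obtains $F(xy)=xy$ and $F(x^{-1})=x^{-1}$. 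Therefore $^2G_2^{syl}(q)=G_2^{syl}(q)^F\leqslant G_2^{syl}(q)$.

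There is no real obstacle: the substantive content, namely the explicit fixed-point parametrisation and the verification that $F$ preserves $G_2^{syl}(q)$, has already been carried out in the proposition, and the corollary is just a one-line consequence of the fact that fixed points of a group automorphism form a subgroup.
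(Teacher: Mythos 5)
Your proposal is correct and follows essentially the same route as the paper, which leaves the corollary as an immediate consequence of Proposition \ref{sylow p subg-2G2}: by definition $^2G_2^{syl}(q)=G_2^{syl}(q)^F$, and the explicit parametrisation shows every such element lies in $G_2^{syl}(q)$, with the fixed points forming a subgroup. (Your $F$-stability step is not even needed: for any subgroup $X\leqslant G_2(q)$ and automorphism $F$ of $G_2(q)$, the set $X^F=\{x\in X\mid F(x)=x\}$ is automatically a subgroup of $X$, since products and inverses of fixed elements of $X$ stay in $X$ and stay fixed.)
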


\begin{Notation}
 For $i\in \{1,3,4\}$ and $t_i\in \mathbb{F}_q$,
 we set
\begin{align*}
 a(t_1):= y_2(t_1^{3\theta}) y_1(t_1)y_5(t_1^{3\theta+3})y_6(t_1^{6\theta+3}),\quad
 b(t_3):= y_3(t_3)y_5(t_3^{3\theta}),\quad
 c(t_4):=y_4(t_4)y_6(t_4^{3\theta}).
\end{align*}
\end{Notation}
\begin{Lemma}
Let
$Y(t_1,t_3,t_4):=a(t_1)b(t_3)c(t_4)$.
{Then}
\begin{align*}
& y_2(t_1^{3\theta})
\cdot
y_1(t_1)
y_3(t_3)
y_4(t_4)
\cdot
y_5(t_3^{3\theta}+t_1^{3\theta+3})
y_6(t_4^{3\theta}+t_1^{6\theta+3})
= a(t_1) b(t_3) c(t_4)
= Y(t_1,t_3,t_4).
\end{align*}
\end{Lemma}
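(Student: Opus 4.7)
The plan is to transform the right-hand side $a(t_1)b(t_3)c(t_4)$ into the left-hand side purely by rearranging the root-subgroup factors, exploiting two structural facts about $G_2^{syl}(q)$ in characteristic three. First, since $3\alpha+2\beta$ is the highest positive root of $\Phi_{G_2}^+$, the subgroup $Y_6$ is central in $G_2^{syl}(q)$, so every $y_6(s)$ commutes with every $y_i(t)$. Second, $Y_5$ commutes with $Y_1,Y_3,Y_4$ and $Y_6$: none of the sums $(3\alpha+\beta)+\alpha$, $(3\alpha+\beta)+(\alpha+\beta)$, $(3\alpha+\beta)+(2\alpha+\beta)$ (or their higher positive integral combinations) are roots, so the Chevalley commutator formula gives trivial commutators. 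The one obstruction is that $Y_5$ does \emph{not} commute with $Y_2$, since $\beta+(3\alpha+\beta)=3\alpha+2\beta$ is a root and $[y_2(t_2),y_5(t_5)]=y_6(t_2t_5)$.

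Concretely, I would first expand
\[
a(t_1)b(t_3)c(t_4)=y_2(t_1^{3\theta})y_1(t_1)y_5(t_1^{3\theta+3})y_6(t_1^{6\theta+3})\cdot y_3(t_3)y_5(t_3^{3\theta})\cdot y_4(t_4)y_6(t_4^{3\theta})
\]
and then carry out the following sequence of moves. Push $y_6(t_1^{6\theta+3})$ across $y_3(t_3)$, $y_5(t_3^{3\theta})$, and $y_4(t_4)$ to the far right, using centrality of $Y_6$. Next, commute $y_5(t_1^{3\theta+3})$ past $y_3(t_3)$ via $[Y_5,Y_3]=1$, merge it with the adjacent $y_5(t_3^{3\theta})$ using additivity $y_5(u)y_5(v)=y_5(u+v)$, and then commute the combined $y_5(t_1^{3\theta+3}+t_3^{3\theta})$ past $y_4(t_4)$ via $[Y_5,Y_4]=1$. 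Finally, collapse the two trailing $y_6$ factors via $y_6(u)y_6(v)=y_6(u+v)$. This yields precisely the stated left-hand side.

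The main obstacle---really the only thing requiring care---is ensuring that no step of this rearrangement slides a $y_5$ past the leftmost $y_2(t_1^{3\theta})$; if it did, the nontrivial relation $[y_2,y_5]=y_6(\cdot)$ would produce an unwanted extra $y_6$-term, and the coefficient of $y_6$ on the right would fail to match $t_4^{3\theta}+t_1^{6\theta+3}$. Inspection of $a(t_1)b(t_3)c(t_4)$ shows that every $y_5$ factor already stands strictly to the right of the unique $y_2$ factor, so no such crossing is ever required and the identity follows from the commutativity and additivity steps above.
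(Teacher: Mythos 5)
Your rearrangement is correct: the only nontrivial commutators among the factors involved would come from $Y_2$ against $Y_5$, and since every $y_5$ (and $y_6$) factor in $a(t_1)b(t_3)c(t_4)$ already sits to the right of $y_2(t_1^{3\theta})$, the moves you list use only root-theoretically trivial commutators ($4\alpha+\beta$, $4\alpha+2\beta$, $5\alpha+2\beta$ are not roots, and $3\alpha+2\beta$ is highest) plus one-parameter additivity, giving exactly the stated product. The paper offers no written proof beyond an implicit direct calculation with the listed commutator relations, and your argument is essentially that same computation, carried out explicitly.
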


By calculation, we get the following properties.
\begin{Lemma}
 Let $i\in \{1,3,4\}$ and $t_i, s_i\in \mathbb{F}_q$. Then
\begin{align*}
 Y(t_1,t_3,t_4)\cdot Y(s_1,s_3,s_4)\
=& Y(t_1+s_1,{\ }t_3+s_3-t_1s_1^{3\theta},{\ }t_4+s_4+t_1s_1^{3\theta+1}-t_1^2s_1^{3\theta}-t_3s_1),\\
 Y(t_1,t_3,t_4)^{-1}
=& Y(-t_1,{\ }-t_3-t_1^{3\theta +1},{\ } -t_4+t_1^{3\theta+2}-t_1t_3).
\end{align*}
In particular,
\begin{alignat*}{2}
 & a(t_1)\cdot a(s_1)= Y(t_1+s_1,{\ }-t_1s_1^{3\theta},{\ }t_1s_1^{3\theta+1}-t_1^2s_1^{3\theta}),
 & \quad
 & a(t_1)^{-1}= Y(-t_1,{\ } -t_1^{3\theta +1},{\ } t_1^{3\theta+2}),\\
 & b(t_3)\cdot b(s_3)= b(t_3+s_3),
 & \quad
 & c(t_4)\cdot c(s_4)= c(t_4+s_4).
\end{alignat*}
\end{Lemma}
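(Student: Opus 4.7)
The plan is to verify all identities by direct computation in $^2G_2^{syl}(q) \leqslant G_2^{syl}(q)$, using the Chevalley commutator formulas for $G_2^{syl}(q)$ in characteristic $3$ reproduced in Section~\ref{sec:2G2-1}. In characteristic $3$ only three commutators of positive root subgroups are non-trivial: $[y_1(t_1), y_2(t_2)] = y_3(-t_2 t_1) y_4(-t_2 t_1^2) y_5(t_2 t_1^3) y_6(t_2^2 t_1^3)$, $[y_1(t_1), y_3(t_3)] = y_4(t_1 t_3)$, and $[y_2(t_2), y_5(t_5)] = y_6(t_2 t_5)$; all others vanish, and $Y_6$ is central. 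As an alternative cross-check, everything can be verified by direct matrix multiplication via Proposition~\ref{sylow p subg-2G2}.

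I would first dispose of $b(t_3) b(s_3) = b(t_3+s_3)$ and $c(t_4) c(s_4) = c(t_4+s_4)$. Since $Y_3$ commutes with $Y_5$ and $Y_4$ commutes with $Y_6$, and since $x \mapsto x^{3\theta}$ is additive in characteristic $3$ (because $3\theta$ is a power of $3$), these reduce to a single line.

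The core step is $a(t_1) \cdot a(s_1)$. After expanding and using centrality of $y_6$, I would commute $y_5(t_1^{3\theta+3})$ past $y_2(s_1^{3\theta})$ (generating $y_6(-s_1^{3\theta} t_1^{3\theta+3})$), then commute $y_1(t_1)$ past $y_2(s_1^{3\theta})$ (generating the four-term $[y_1, y_2]$ correction), and finally push $y_1(s_1)$ leftwards through $y_3(-s_1^{3\theta} t_1)$ (generating $y_4(s_1^{3\theta+1} t_1)$ via $[y_1, y_3]$, while $y_4$ and $y_5$ commute with $y_1$ in characteristic $3$). Collecting into canonical form $y_2 y_1 y_3 y_4 y_5 y_6$ and matching against the announced $Y(t_1+s_1, -t_1 s_1^{3\theta}, t_1 s_1^{3\theta+1} - t_1^2 s_1^{3\theta})$ requires the identity $s_1^q = s_1$ (equivalently $s_1^{9\theta^2} = s_1^3$) together with the non-Frobenius expansions $(t_1+s_1)^{3\theta+3} = (t_1^{3\theta}+s_1^{3\theta})(t_1^3+s_1^3)$ and $(t_1+s_1)^{6\theta+3} = (t_1^{3\theta}+s_1^{3\theta})^2(t_1^3+s_1^3)$; the cross term $-t_1^{3\theta} s_1^{3\theta}(t_1^3+s_1^3)$ in the latter is precisely what forces the $y_6$-entries to agree.

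For the general product $Y(t_1,t_3,t_4) \cdot Y(s_1,s_3,s_4) = a(t_1) b(t_3) c(t_4) a(s_1) b(s_3) c(s_4)$, I would exploit that $c(\cdot)$ commutes with both $a(\cdot)$ and $b(\cdot)$ (all relevant commutators involving $y_4$ and $y_6$ vanish in characteristic $3$), so only $b(t_3) \cdot a(s_1)$ needs real work: here $y_3(t_3) y_1(s_1) = y_1(s_1) y_3(t_3) y_4(-t_3 s_1)$ and $y_5(t_3^{3\theta}) y_2(s_1^{3\theta}) = y_2(s_1^{3\theta}) y_5(t_3^{3\theta}) y_6(-s_1^{3\theta} t_3^{3\theta})$ are the only new contributions. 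Combining with the $a(t_1) \cdot a(s_1)$ formula, the $b\,b$ and $c\,c$ formulas, and the fact that $a(R_1) b(R_3) c(R_4) = Y(R_1, R_3, R_4)$ (a direct rearrangement), one recovers $Y(R_1, R_3, R_4)$ with the announced $R_i$. The inverse formula then falls out of $Y(t_1, t_3, t_4) \cdot Y(s_1, s_3, s_4) = Y(0,0,0) = 1$ solved entry by entry: $s_1 = -t_1$, then $s_3 = -t_3 - t_1^{3\theta+1}$ (using $(-t_1)^{3\theta} = -t_1^{3\theta}$ since $3\theta$ is odd), and finally $s_4 = -t_4 + t_1^{3\theta+2} - t_1 t_3$. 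The main obstacle throughout is the Frobenius bookkeeping of the $y_5$- and $y_6$-entries: without the reduction $x^{9\theta^2} = x^3$ for $x \in \mathbb{F}_q$ and the non-additive binomial expansions of $(t_1+s_1)^{3\theta+3}$ and $(t_1+s_1)^{6\theta+3}$ in characteristic $3$, the canonical form produced by the commutator gymnastics will not visibly match the defining formula for $Y$; once these are pinned down, the remaining algebra is mechanical.
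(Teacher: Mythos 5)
Your proposal is correct and matches the paper's (entirely terse) justification: the paper simply states these identities follow ``by calculation'' from the commutator relations and the explicit matrix form in Proposition~\ref{sylow p subg-2G2}, and your commutator-pushing computation --- using only $[y_1,y_2]$, $[y_1,y_3]$, $[y_2,y_5]$ in characteristic $3$, the reduction $s^{9\theta^2}=s^3$, and the expansions of $(t_1+s_1)^{3\theta+3}$ and $(t_1+s_1)^{6\theta+3}$ --- is exactly that calculation carried out explicitly. The individual steps you cite (the $y_6(-s_1^{3\theta}t_1^{3\theta+3})$ and $y_4(s_1^{3\theta+1}t_1)$ corrections, $c$ commuting with $a$ and $b$, and solving $Y(t_1,t_3,t_4)Y(s_1,s_3,s_4)=I_8$ for the inverse) all check out.
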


\begin{Lemma}
If $i\in \{1,3,4\}$ and $t_i, s_i\in \mathbb{F}_q$,
then the commutators of $^2{G}_2^{syl}(q)$ are
\begin{align*}
 & [Y(t_1,t_3,t_4), Y(s_1,s_3,s_4)]\\
=& Y\big(0,{\ }t_1^{3\theta}s_1-t_1s_1^{3\theta},{\ }
       (t_1s_1^{3\theta+1}-t_1^{3\theta+1}s_1)+(t_1^{3\theta}s_1^2-t_1^2s_1^{3\theta})+(t_1s_3-t_3s_1)\big),\\
 & [Y(t_1,t_3,t_4)^{-1}, Y(s_1,s_3,s_4)^{-1}]
= Y\big(0,{\ }t_1^{3\theta}s_1-t_1s_1^{3\theta},{\ }
       (t_1^2s_1^{3\theta}-t_1^{3\theta}s_1^2)+(t_1s_3-t_3s_1)\big).
\end{align*}
{In particular, }
\begin{align*}
& [a(t_1), a(s_1)]= b(t_1^{3\theta}s_1-t_1s_1^{3\theta})
     \cdot c(t_1^{3\theta}s_1^2-t_1^2s_1^{3\theta}
           +t_1s_1^{3\theta+1}-t_1^{3\theta+1}s_1),\\
& [a(t_1)^{-1}, a(s_1)^{-1}]= b(t_1^{3\theta}s_1-t_1s_1^{3\theta})
     \cdot c(t_1^2s_1^{3\theta}-t_1^{3\theta}s_1^2),\\
& [a(t_1), b(s_3)]= c(t_1s_3),\qquad
 [a(t_1)^{-1}, b(s_3)^{-1}]= c(t_1s_3).
\end{align*}
\end{Lemma}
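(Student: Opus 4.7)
The plan is to expand the two commutators $[g,h] := g^{-1}h^{-1}gh$ and $[g^{-1},h^{-1}] = ghg^{-1}h^{-1}$ (where $g := Y(t_1,t_3,t_4)$ and $h := Y(s_1,s_3,s_4)$) directly using the multiplication and inversion formulas of the previous lemma, and to read off the three $Y$-coordinates of the product. The entire calculation takes place in characteristic $3$, so I will freely use that Frobenius preserves signs (in particular $(-x)^{3\theta} = -x^{3\theta}$, since $3\theta = 3^{m+1}$ is odd), that $3X = 0$ for every $X\in \mathbb{F}_q$, and that $t^{3\theta^2} = t^q = t$ for every $t\in \mathbb{F}_q$.

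Concretely, I would first record $g^{-1} = Y(-t_1,\,-t_3-t_1^{3\theta+1},\,-t_4+t_1^{3\theta+2}-t_1t_3)$ and the analogous expression for $h^{-1}$, then apply the multiplication formula three times in succession: form $g^{-1}h^{-1}$, right-multiply by $g$, and finally right-multiply by $h$. Each application produces a $Y$-tuple whose three coordinates can be tracked separately. The first coordinate telescopes immediately to $-t_1-s_1+t_1+s_1 = 0$, which accounts for the reported $Y(0,\ast,\ast)$ form. The second coordinate picks up only the antisymmetric cross-term $t_1^{3\theta}s_1 - t_1 s_1^{3\theta}$: every other contribution to it is of the form $u_1 v_1^{3\theta}$ with matching $u,v$ on both sides of the commutator and cancels in pairs.

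The third coordinate is where the work lies. The multiplication formula contributes, at each of the three multiplication steps, terms of the shape $u_1 v_1^{3\theta+1}$, $u_1^2 v_1^{3\theta}$ and $u_3 v_1$; after substituting the values of $u_i,v_i$ coming from $g^{\pm 1}, h^{\pm 1}$ one obtains a sum of roughly a dozen monomials in $t_1, s_1, t_3, s_3$ which collapse (using $3X = 0$) onto the three visible summands $(t_1 s_1^{3\theta+1} - t_1^{3\theta+1} s_1) + (t_1^{3\theta} s_1^2 - t_1^2 s_1^{3\theta}) + (t_1 s_3 - t_3 s_1)$. The parallel computation for $[g^{-1},h^{-1}]$ is formally identical but carried out in the reversed order $gh g^{-1} h^{-1}$; the outcome differs only in the third coordinate, where the cross-terms $t_1 s_1^{3\theta+1} - t_1^{3\theta+1} s_1$ now appear on both sides of the commutator in matching positions and cancel, which explains their absence from the second identity.

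The main obstacle is the careful bookkeeping of the many third-coordinate terms, particularly distinguishing which vanish by $3X = 0$ and which combine to give the stated antisymmetric polynomials. Once the two general identities are established, the ``in particular'' formulas are immediate specialisations: setting $t_3 = t_4 = s_3 = s_4 = 0$ and using $Y(t_1,0,0) = a(t_1)$ yields the expressions for $[a(t_1),a(s_1)]$ and $[a(t_1)^{-1}, a(s_1)^{-1}]$, while setting $t_3 = t_4 = s_1 = s_4 = 0$ and using $Y(0,s_3,0) = b(s_3)$, $Y(0,0,t_1 s_3) = c(t_1 s_3)$ gives $[a(t_1), b(s_3)] = c(t_1 s_3)$ and its inverse analogue.
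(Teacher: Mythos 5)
Your proposal is correct and matches the paper's (implicit) argument: the paper simply derives these commutator formulas "by calculation" from the multiplication and inversion formulas for $Y(t_1,t_3,t_4)$ stated in the preceding lemma, exactly as you do, and your bookkeeping in characteristic $3$ (including the cancellation of the $t_1s_1^{3\theta+1}-t_1^{3\theta+1}s_1$ terms in $[Y(t_1,t_3,t_4)^{-1},Y(s_1,s_3,s_4)^{-1}]$ and the specialisations $Y(t_1,0,0)=a(t_1)$, $Y(0,t_3,t_4)=b(t_3)c(t_4)$) checks out.
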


\begin{Proposition}\label{prop:conjugacy classes of a,b,c-2G2}
Let $t_i, s_i\in \mathbb{F}_q$ with $i\in \{1,3,4\}$.
Then the conjugate of $Y(t_1,t_3,t_4)$ is
 \begin{align*}
& Y(s_1,s_3,s_4)\cdot Y(t_1,t_3,t_4)\cdot Y(s_1,s_3,s_4)^{-1}\\
=& Y\big(t_1,{\ }t_3+t_1s_1^{3\theta}-t_1^{3\theta}s_1,{\ }
t_4+(t_1^2s_1^{3\theta}+t_1^{3\theta}s_1^2)+t_1^{3\theta+1}s_1+(t_3s_1-t_1s_3)\big).
\end{align*}
In particular,
\begin{align*}
& Y(s_1,s_3,s_4)\cdot a(t_1)\cdot Y(s_1,s_3,s_4)^{-1}
= Y\big(t_1,{\ }t_1s_1^{3\theta}-t_1^{3\theta}s_1,{\ }
(t_1^2s_1^{3\theta}+t_1^{3\theta}s_1^2)+t_1^{3\theta+1}s_1-t_1s_3\big),\\
& Y(s_1,s_3,s_4)\cdot b(t_3)\cdot Y(s_1,s_3,s_4)^{-1}= Y(0,{\ }t_3,{\ }t_3s_1),\\
& Y(s_1,s_3,s_4)\cdot c(t_4)\cdot Y(s_1,s_3,s_4)^{-1}= c(t_4).
\end{align*}
\end{Proposition}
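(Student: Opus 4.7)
The plan is to deduce the conjugation formula directly from the multiplication and commutator formulas of the preceding lemmas, without unpacking matrix entries. With the convention $[g,h]=g^{-1}h^{-1}gh$ fixed earlier in the paper, one has the one-line identity
\begin{align*}
 ghg^{-1}=[g^{-1},h^{-1}]\cdot h,
\end{align*}
since $[g^{-1},h^{-1}]=ghg^{-1}h^{-1}$. I apply this with $g=Y(s_1,s_3,s_4)$ and $h=Y(t_1,t_3,t_4)$, which reduces the conjugate to a known commutator times $Y(t_1,t_3,t_4)$.

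By the commutator lemma, with the roles of $s$ and $t$ swapped, the factor $[Y(s_1,s_3,s_4)^{-1},Y(t_1,t_3,t_4)^{-1}]$ equals
\begin{align*}
 Y\bigl(0,\;s_1^{3\theta}t_1-s_1t_1^{3\theta},\;(s_1^2t_1^{3\theta}-s_1^{3\theta}t_1^2)+(s_1t_3-s_3t_1)\bigr).
\end{align*}
Because its first coordinate vanishes, only the special case
\begin{align*}
 Y(0,A,B)\cdot Y(t_1,t_3,t_4)=Y(t_1,\;A+t_3,\;B+t_4-At_1)
\end{align*}
of the product formula is needed. Substituting $A$ and $B$ from the commutator and expanding $-At_1=-s_1^{3\theta}t_1^2+s_1t_1^{3\theta+1}$ yields the claimed second coordinate at once, and all summands of the third coordinate, once the two copies of $-t_1^2 s_1^{3\theta}$ (one from $B$, one from $-At_1$) are merged via $-2\equiv 1\pmod{3}$ into $+t_1^2 s_1^{3\theta}$.

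The three special cases for $a(t_1)$, $b(t_3)$, $c(t_4)$ then drop out by setting the two inactive coordinates of $Y(t_1,t_3,t_4)$ to zero in the general formula; for example $t_1=t_3=0$ immediately gives $Y(0,0,t_4)=c(t_4)$, which records that $c(t_4)$ is central. The only real labor is clerical: the single characteristic-$3$ reduction $-2\equiv 1$ is the one non-obvious step, and since the product, inversion, and commutator formulas are already in hand, no new structural input is required.
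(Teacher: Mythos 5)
Your proof is correct. The identity $ghg^{-1}=[g^{-1},h^{-1}]\,h$ is valid for the paper's convention $[g,h]=g^{-1}h^{-1}gh$, the swapped commutator $[Y(s_1,s_3,s_4)^{-1},Y(t_1,t_3,t_4)^{-1}]$ is quoted correctly from the commutator lemma, the degenerate product rule $Y(0,A,B)\cdot Y(t_1,t_3,t_4)=Y(t_1,\,A+t_3,\,B+t_4-At_1)$ follows at once from the multiplication lemma, and the single reduction $-2\equiv 1 \pmod 3$ produces exactly the stated third coordinate; the specializations to $a(t_1)$, $b(t_3)$, $c(t_4)$ are immediate. The paper itself gives no written proof beyond ``by calculation,'' which implicitly means computing $\big(Y(s_1,s_3,s_4)Y(t_1,t_3,t_4)\big)\cdot Y(s_1,s_3,s_4)^{-1}$ directly, i.e.\ two applications of the full product formula plus the inversion formula, with the characteristic-$3$ cancellations (such as $3s_1^{3\theta+2}=0$) appearing along the way. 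Your route replaces one of those full multiplications by an appeal to the already-stated inverse-commutator formula, so the only multiplication you perform is the trivial one with vanishing first coordinate; this buys a shorter computation and makes transparent why the first coordinate of the conjugate is $t_1$, at the cost of depending on the correctness of the commutator lemma (which is itself unproved in the paper, though consistent with the multiplication formula). Either way the structural input is the same, and your verification is complete.
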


 Define the following sets of matrix entry coordinates:
  $\Squar:= \{(i,j) \mid  1\leq i,j \leq 8 \}$,
  $\UR:= \{(i,j) \mid 1\leq i< j \leq 8 \}$
 and
  $\UP:= \{(i,j)\in \Squar  \mid  i < j < 9-i \}$.
Let
 $\tilde{J}:=\{(1,2), (1,3), (1,4), (1,5), (1,6), (1,7), (2,3)\} \subseteq \UP$,
and
 $J:=\{(1,2), (1,3), (1,4)\} \subseteq \tilde{J}$
\begin{Comparison}[Sylow $p$-subgroups]
\label{com:sylow-2G2}
For every element of $^2G_2^{syl}(q)$  in \ref{sylow p subg-2G2},
we have matrix entries $t_1$ and
 up to sign also $t_3$ with positions in $J$,
 but $t_4$
 appears in $J$ in
 polynomials
 involving $t_1$ and $t_3$.
This is similar to
that of ${{^3D}_4^{syl}}(q^3)$ (see \cite[\S 2]{sun3D4super})
and that of $G_2^{syl}(q)$ (see \cite[\S 2]{sunG2}).
\end{Comparison}



\section{Monomial ${{^2}G}^{syl}_2(3^{2m+1})$-module}
\label{sec: monomial 2G2-module}
Let
$p:=3$, $q:=3^{2m+1} {\ }(m\in \mathbb{N})$,
$G:=A_8(q)$
and
$U:={{^2}G}^{syl}_2(3^{2m+1})$.
In this section,
we explain the construction of a monomial $A_8(q)$-module $\mathbb{C}U$ (see \ref{fund thm U-2G2})
that is analogous to that of ${{^3D}_4^{syl}}(q^3)$ (see \cite{sun3D4super})
and that of $G_2^{syl}(q)$ (see \cite{sunG2}).
The construction is further applied to the cases
of type $E_6$, $E_7$, $F_4$ and the twisted type ${^2}E_6$
in the author's PhD thesis \cite[Chapter 5]{sunphd}.

Let $V_0:=\mathrm{Mat}_{8\times 8}(q)$.
For any subset $I\subseteq \Squar$,
let $V_I:=\bigoplus_{(i,j)\in I}{\mathbb{F}_{q}}e_{ij} \subseteq V_0$.
In particular, $V_{\Squarb}=V_0$.
Then $V_I$ is an $\mathbb{F}_{q}$-vector subspace.
We have $\dim_{\mathbb{F}_q}{V_J}=3$,
since
 $J=\{(1,2), (1,3), (1,4)\}$.
The map
$\kappa \colon  V_0\times V_0  \to  {\mathbb{F}_{q}}: (A,B)\mapsto \mathrm{tr} (A^\top B)$
is a non-degenerate symmetric $\mathbb{F}_{q}$-bilinear form on $V_0$
which is called the {\textbf{trace form}}.
Let $V:=V_J$,
and $V^{\bot}$ denote the orthogonal complement of $V$ in $V_0$
 with respect to the trace form $\kappa$, i.e.
 $ V^{\bot}:=\{B\in V_0 \mid\kappa(A,B)=0,{\ }\forall {\ } A \in V\}$.
 Then
$V^{\bot}=V_{\Squarb \backslash J}$
and
$V_0=V \oplus V^{\bot}$.
Note that $\kappa|_{V\times V} \colon V\times V \to  \mathbb{F}_q$ is a non-degenerate bilinear form.
Set $\pi:=\pi_J$, i.e.
\begin{align*}
 \pi \colon V_0=V\oplus V^\bot  \to  V:
  A\mapsto
\sum_{(i,j)\in J}{A_{i,j}e_{i,j}}
=A_{12}e_{12}+A_{13}e_{13}+A_{14}e_{14}.
\end{align*}
Then $\pi$ is a projection to the first component
$V$
and is an ${\mathbb{F}_q}$-linear map.
Suppose $A,B\in V_0$ such that $\mathrm{supp}(A)\cap\mathrm{supp}(B)\subseteq J$.
Then
$\kappa(A,B)=\kappa(\pi(A),B)=\kappa(A,\pi(B))
= \kappa(\pi(A),\pi(B))=\kappa|_{V\times V}(\pi(A),\pi(B))$.
If $A,B\in V$ and $g,h\in G$, then
$\pi_{\URb}(Ag^\top)\in V$
and
$\mathrm{supp}(Bh^\top)\cap \mathrm{supp}(Ag)\subseteq  J$.
\begin{Proposition}[Group action of $G$ on $V$]\label{circ action-2G2}
The map
\begin{align*}
-\circ- \colon V\times G \to  V: (A,g)\mapsto A\circ g:=\pi(Ag)
\end{align*}
is a group action,
and the elements of the group $G$ act as
$\mathbb{F}_q$-automorphisms.
\end{Proposition}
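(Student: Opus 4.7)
The plan is to verify the three standard requirements in turn: that $-\circ-$ sends $V\times G$ into $V$, that it satisfies the group action axioms $A\circ 1 = A$ and $A\circ (gh) = (A\circ g)\circ h$, and that for each fixed $g$ the map $A\mapsto A\circ g$ is $\mathbb{F}_q$-linear (and hence an $\mathbb{F}_q$-automorphism once the action property is known). The key structural observation driving the proof is that every $A\in V = V_J$ has support confined to row $1$, with columns in $\{2,3,4\}$, so $A = A_{12}e_{12}+A_{13}e_{13}+A_{14}e_{14}$.

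First I would note that for $g\in G = A_8(q)$ upper unitriangular, and for any $A\in V$, the product $Ag$ again has support contained in row $1$, with columns in $\{2,3,\dots,8\}$, because $e_{1j}g = \sum_{k\geq j} g_{jk}e_{1k}$. Hence $\pi(Ag) \in V$ is well defined, and $A\circ 1 = \pi(AI_8) = \pi(A) = A$ since $A\in V$.

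The main point is the associativity $\pi(Agh) = \pi(\pi(Ag)\cdot h)$ for $g,h\in G$. Writing $B := Ag$, which lies in row $1$, I would compute both sides entry-wise in positions $(1,j)$ for $j\in\{2,3,4\}$. On the right, $(\pi(B)h)_{1j} = \sum_{k\in\{2,3,4\}} B_{1k}h_{kj}$. On the left, $(Bh)_{1j} = \sum_{k\geq 2} B_{1k}h_{kj}$, but since $h$ is upper unitriangular, $h_{kj} = 0$ whenever $k>j$, so only $k\in\{2,\dots,j\}\subseteq\{2,3,4\}$ contribute. Therefore the two sums agree on $J$, giving $\pi(Bh) = \pi(\pi(B)h)$, which is the desired cocycle property. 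This is the only nontrivial step, and the obstacle (really just a careful bookkeeping point) is ensuring that the truncation $\pi$ does not lose information used in positions $(1,2),(1,3),(1,4)$ after a subsequent right multiplication — which is precisely what upper unitriangularity of $h$ guarantees.

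Finally, $\mathbb{F}_q$-linearity in the $A$ argument is immediate: both $A\mapsto Ag$ and $\pi$ are $\mathbb{F}_q$-linear, so their composition is. Combined with the group action property just established, each $g$ acts via an invertible $\mathbb{F}_q$-linear endomorphism of the finite-dimensional space $V$, hence an $\mathbb{F}_q$-automorphism, completing the proof.
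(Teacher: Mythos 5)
Your proof is correct, but it takes a different route from the paper. You verify the crucial identity $\pi(Agh)=\pi\bigl(\pi(Ag)h\bigr)$ by a direct entry-wise computation, exploiting the fact that $V=V_J$ lives entirely in row $1$, columns $2,3,4$, and that upper unitriangularity of $h$ kills every contribution $B_{1k}h_{kj}$ with $k>j$, so truncating $Ag$ by $\pi$ before multiplying by $h$ loses nothing in the positions of $J$. The paper instead argues by duality: it uses the non-degenerate trace form $\kappa|_{V\times V}$ together with the support facts recorded just before the proposition ($\mathrm{supp}(Bh^\top)\cap\mathrm{supp}(Ag)\subseteq J$, and $\kappa(A,B)=\kappa(\pi(A),B)$ when the common support lies in $J$) to show $\kappa(B,A\circ(gh))=\kappa(B,(A\circ g)\circ h)$ for all $B\in V$, and then concludes from non-degeneracy. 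Your argument is more elementary and transparent for this particular $V$, since the single-row structure makes the bookkeeping trivial; the paper's form-theoretic argument buys uniformity — it does not depend on $V$ being concentrated in one row (the same scheme is used for the other Lie types, where $V$ is larger), and it sets up exactly the adjointness $\kappa(A.g,B)=\kappa(A,B\circ g^{-1})$ that is needed immediately afterwards to define the dual action $A.g=\pi(Ag^{-\top})$ in Corollary \ref{action A dot g-2G2}. Your handling of the remaining points (identity axiom, $\mathbb{F}_q$-linearity, invertibility from the action axioms) matches the paper's brief remarks and is fine.
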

\begin{proof}
Let $A, B\in V$, $g, h\in G$.
Since $\pi$ is $\mathbb{F}_q$-linear,
it is enough to prove
$A\circ (gh) {=}(A\circ g) \circ h$.
We have that
\begin{align*}
 &  \kappa(B, A\circ(gh))
{=}\kappa(B, A(gh))
= \kappa(Bh^\top, Ag)
{=} \kappa(\pi(Bh^\top), Ag)\\
{=}&\kappa(\pi(Bh^\top), A\circ g)
{=} \kappa(Bh^\top, A\circ g)
=\kappa(B, (A\circ g)h)
{=}\kappa(B, (A\circ g)\circ h).
\end{align*}
Since $\kappa|_{V\times V}$ is a non-degenerate bilinear form,
we obtain that $A\circ (gh) {=}(A\circ g) \circ h$.
Therefore, the proof is completed.
\end{proof}

\begin{Corollary}\label{circ g-circ gT-2G2}
If $A,B\in V$ and $g\in G$,
then we have that
$\kappa(A, B\circ g)=\kappa(A, Bg)=\kappa(Ag^\top, B)=\kappa(\pi(A g^\top), B)$.
\end{Corollary}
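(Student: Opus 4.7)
The plan is to chain three short arguments, each one an application of a basic identity about the trace form together with the support constraint coming from the fact that $A,B\in V=V_J$.

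First, for the equality $\kappa(A,B\circ g)=\kappa(A,Bg)$: by definition $B\circ g=\pi(Bg)$, so the left-hand side equals $\kappa(A,\pi(Bg))$. Since $A\in V$ has support contained in $J$, we have $\mathrm{supp}(A)\cap \mathrm{supp}(Bg)\subseteq J$, and the identity $\kappa(A,C)=\kappa(A,\pi(C))$ for such supports (recorded in the paragraph preceding \ref{circ action-2G2}) gives $\kappa(A,Bg)=\kappa(A,\pi(Bg))$. This yields the first equality.

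Second, for $\kappa(A,Bg)=\kappa(Ag^\top,B)$: this is a purely formal computation with the trace, namely
\begin{align*}
\kappa(A,Bg)=\mathrm{tr}(A^\top Bg)=\mathrm{tr}(gA^\top B)=\mathrm{tr}((Ag^\top)^\top B)=\kappa(Ag^\top,B),
\end{align*}
using cyclicity of the trace.

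Third, for $\kappa(Ag^\top,B)=\kappa(\pi(Ag^\top),B)$: here we use the symmetric condition, namely $B\in V$ has support in $J$, so $\mathrm{supp}(Ag^\top)\cap\mathrm{supp}(B)\subseteq J$, and the same support lemma yields the desired equality, this time projecting in the first argument. There is no real obstacle; the only thing to check carefully is that the support condition applies in both directions, which is why the statement requires both $A\in V$ and $B\in V$.
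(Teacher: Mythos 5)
Your proof is correct and follows the route the paper intends for this corollary: the definition $B\circ g=\pi(Bg)$, the support remark preceding Proposition \ref{circ action-2G2} (applicable trivially since $A,B\in V=V_J$ forces the relevant support intersections into $J$), and adjointness of the trace form $\kappa(A,Bg)=\kappa(Ag^\top,B)$ via cyclicity of the trace, exactly the manipulations used in the proof of \ref{circ action-2G2}. Nothing is missing.
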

Let $A,B\in V$, $g\in G$,
and let $A.g$ denote $\pi(A g^{-\top})$.
Then $V\to V:A\mapsto A.g$ is an $\mathbb{F}_q$-endomorphism,
and $\kappa|_{V\times V}(A.g,B)=\kappa|_{V\times V}(A,B\circ g^{-1})$.
By \cite[\S 2.1]{Markus1}, we get the following new action.
\begin{Corollary}\label{action A dot g-2G2}
There exists a unique linear action $-.-$ of $G$ on $V$:
\begin{align*}
-.- \colon V\times G  \to  V: (A,g)\mapsto A.g=\pi(A g^{-\top})
\end{align*}
such that
$\kappa|_{V\times V}(A.g,B)=\kappa|_{V\times V}(A,B\circ g^{-1})$
 for all  $B\in V$.
\end{Corollary}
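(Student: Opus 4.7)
The plan is to establish the four ingredients of the statement separately—linearity in $A$, the adjoint relation, the group-action axioms, and uniqueness—and to leverage the non-degeneracy of $\kappa|_{V\times V}$ throughout, in parallel with the proof of Proposition \ref{circ action-2G2}.

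First I would observe that the adjoint relation is essentially a reparametrization of Corollary \ref{circ g-circ gT-2G2}: substituting $g\mapsto g^{-1}$ there gives $\kappa(A, B\circ g^{-1}) = \kappa(\pi(Ag^{-\top}), B) = \kappa(A.g, B)$ for all $A, B \in V$, which is exactly the desired identity. Linearity in $A$ is then immediate, since both the projection $\pi$ and right multiplication by $g^{-\top}$ are $\mathbb{F}_q$-linear. Uniqueness follows at once: any other linear action $-\star-$ satisfying the adjoint relation would give $\kappa(A.g, B) = \kappa(A\star g, B)$ for all $B \in V$, forcing $A.g = A\star g$ by non-degeneracy of $\kappa|_{V\times V}$.

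For the action axioms, $A.1 = \pi(A) = A$ is trivial since $A \in V$. For associativity, I would reproduce the argument style of Proposition \ref{circ action-2G2}: for $A, B \in V$ and $g, h \in G$,
\begin{align*}
\kappa(A.(gh), B) &= \kappa(A, B\circ (gh)^{-1}) = \kappa(A, (B\circ h^{-1})\circ g^{-1}) \\
&= \kappa(A.g, B\circ h^{-1}) = \kappa((A.g).h, B),
\end{align*}
using the adjoint relation three times and Proposition \ref{circ action-2G2} for the second equality. Non-degeneracy of $\kappa|_{V\times V}$ then yields $A.(gh) = (A.g).h$.

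I do not anticipate a genuine obstacle: every step reduces either to non-degeneracy, to $\mathbb{F}_q$-linearity of $\pi$, or to the already-established behaviour of $-\circ-$ and $\kappa$ on elements whose supports are contained in $J$. The only point requiring care is matching supports when inserting or removing $\pi$ inside the trace form, but this bookkeeping is already codified in Corollary \ref{circ g-circ gT-2G2}.
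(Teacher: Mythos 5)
Your proposal is correct and follows essentially the same route as the paper, which records the linearity of $A\mapsto\pi(Ag^{-\top})$ and the adjoint identity $\kappa|_{V\times V}(A.g,B)=\kappa|_{V\times V}(A,B\circ g^{-1})$ and then delegates the remaining formalities to the cited reference \cite[\S 2.1]{Markus1}. You simply make those formalities explicit — uniqueness by non-degeneracy of $\kappa|_{V\times V}$ and the action axioms by dualizing the action $-\circ-$ of Proposition \ref{circ action-2G2} through the adjoint relation — which is exactly the intended argument.
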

\begin{Notation}  \label{f-2G2}
Set $f:=\pi|_G \colon G \to  V$.
\end{Notation}


\begin{Proposition}\label{f(xg)-2G2}
 Let $x, g \in G$. Then
 $f(x)g\equiv(x-1)g \mod V^\bot$
 and
$f(xg)=f(x)\circ g+f(g)$.
\end{Proposition}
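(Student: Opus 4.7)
The plan is to derive both assertions from the single observation that $f(x)$ and $x-1$ agree, via $\pi$, on the three crucial positions in $J$, and that right-multiplication by an element of $G$ cannot move weight from outside $J$ into $J$.

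First I would show the congruence $f(x) \equiv x-1 \pmod{V^\bot}$. Since $x \in G$ is upper unitriangular, we have $x - 1$ strictly upper triangular, while $1$ has support on the diagonal, disjoint from $J$; hence $\pi(x) = \pi(x-1)$. By definition of $\pi$ as projection onto $V = V_J$ along $V^\bot = V_{\Squar \setminus J}$, this gives $f(x) = \pi(x-1) \equiv x-1 \pmod{V^\bot}$. Consequently, $(x-1) - f(x)$ is supported entirely in $\UR \setminus J$.

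Next, the heart of the argument is to show $(f(x) - (x-1))\,g \in V^\bot$ for $g \in G$. The support of $f(x) - (x-1)$ lies in $\UR \setminus J = \{(i,j) : i < j\} \setminus \{(1,2),(1,3),(1,4)\}$. For row indices $i \geq 2$, the matrix $e_{i,j}\, g$ has support only in row $i$, which does not meet $J$ (since all positions of $J$ are in row $1$). For $i = 1$, the only relevant columns are $j \in \{5,6,7,8\}$, and since $g$ is upper unitriangular, $e_{1,j}\,g$ has support in $\{(1,\ell) : \ell \geq j \geq 5\}$, again missing $J = \{(1,2),(1,3),(1,4)\}$. Applying $\pi$ thus annihilates $(f(x) - (x-1))g$, proving $f(x)g \equiv (x-1)g \pmod{V^\bot}$.

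For the 1-cocycle identity, I would write $xg = g + (x-1)g$ and apply $\pi$, obtaining $f(xg) = \pi(g) + \pi((x-1)g) = f(g) + \pi((x-1)g)$. By the congruence just established, $\pi((x-1)g) = \pi(f(x)g)$, and by definition of the action $-\circ-$ from Proposition \ref{circ action-2G2}, $\pi(f(x)g) = f(x) \circ g$ (noting $f(x) \in V$). Combining yields $f(xg) = f(x) \circ g + f(g)$, as required.

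The main obstacle is the support bookkeeping in the middle step: one must verify that right multiplication by $g \in A_8(q)$ genuinely cannot feed entries from positions in $\UR \setminus J$ back into the three exceptional positions of $J$. This hinges critically on $J$ lying in the first row and consisting of the leftmost non-diagonal columns $\{2,3,4\}$, so that the unitriangular structure of $g$ (entries supported in $\{(i,\ell) : \ell \geq i\}$ with diagonal ones) blocks any such contribution. Everything else is linearity of $\pi$ and the decomposition $V_0 = V \oplus V^\bot$.
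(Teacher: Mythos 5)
Your proof is correct, and its middle step takes a genuinely different route from the paper. The paper proves the key congruence $f(x)g\equiv(x-1)g \bmod V^\bot$ dually: for $A\in V$ it computes $\kappa(A,(x-1)g)=\kappa(Ag^\top,x-1)=\kappa(\pi(Ag^\top),x-1)=\kappa(Ag^\top,f(x))=\kappa(A,f(x)g)$ and then invokes the non-degeneracy of $\kappa|_{V\times V}$, leaning on the previously recorded support fact for $Ag^\top$ with $A\in V=V_J$. You instead verify the congruence directly: $f(x)-(x-1)$ is supported at strictly upper-triangular positions outside $J$, and right multiplication by an upper unitriangular $g$ keeps such matrix units in row $i\geq 2$ (missing $J$, which sits in row $1$) or, for $(1,j)$ with $j\geq 5$, in columns $\geq j\geq 5$ (again missing $J=\{(1,2),(1,3),(1,4)\}$); your restriction to positions in the strict upper triangle outside $J$ is exactly what makes this bookkeeping legitimate, since the analogous claim would fail for diagonal positions. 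Your argument is more elementary and self-contained, making transparent why the cocycle identity hinges on $J$ occupying the leftmost off-diagonal slots of the first row; the paper's $\kappa$-duality argument is less computational and is the form that transfers to the related constructions (e.g.\ for $G_2^{syl}(q)$ and ${}^3D_4^{syl}(q^3)$) where $V$ and $f$ are not simply $V_{\tilde J}$ and $\pi_{\tilde J}$, so a bare support count of $f(x)-(x-1)$ would not suffice. The remaining steps ($f(x)=\pi(x-1)$ and the derivation of $f(xg)=f(x)\circ g+f(g)$ by applying $\pi$ to $xg-1=(x-1)g+(g-1)$, respectively to $xg=(x-1)g+g$) coincide with the paper's.
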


\begin{proof}
Let $x,g\in G$.
Then $f(x)=\pi(x)
 \stackrel{\pi(1)=0}{=}\pi(x)-\pi(1)
\stackrel{\pi\text{ linear}}{=}\pi(x-1)$,
so $f(x)\equiv x-1 \mod V^\bot$.
For all $ A \in V$,
$
\kappa(A, (x-1)g)
{=}\kappa(Ag^\top, x-1)
{=}\kappa(\pi(Ag^\top), x-1)
{=}\kappa(\pi(Ag^\top), f(x))
=\kappa(Ag^\top, f(x))
=\kappa(A, f(x)g)
$.
Since $\kappa|_{V\times V}$ is a non-degenerate bilinear form,
we get that $f(x)g\equiv(x-1)g \mod V^\bot$.

We have that
$ f(xg)
{\equiv} xg-1
= (x-1)g+(g-1)
{\equiv} f(x)g+f(g) \mod V^\bot$,
so $\pi(f(xg))= \pi(f(x)g+f(g))$.
Thus, $f(xg)= \pi(f(x)g)+\pi(f(g))=f(x)\circ g + f(g)$.
\end{proof}

\begin{Proposition}[Bijective 1-cocycle of ${{^2}G}^{syl}_2(q)$]\label{f_U bij-2G2}
\index{1-cocycle!-of ${{^2}G}^{syl}_2(3^{2m+1})$}
If $U={{^2}G}^{syl}_2(3^{2m+1})$,
then $f|_U:=\pi|_U \colon  U \to  V $ is a bijection.
In particular, $f|_U$ is a bijective 1-cocycle of $U$.
\end{Proposition}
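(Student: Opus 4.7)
The plan is to exhibit $f|_U$ explicitly as a map $\mathbb{F}_q^3 \to V$ and check that it is a bijection by reading off the entries at positions in $J=\{(1,2),(1,3),(1,4)\}$ from the matrix representation of an element of $U$. The 1-cocycle property will then be automatic, since $f\colon G\to V$ is itself a 1-cocycle by Proposition \ref{f(xg)-2G2}, and its restriction to the subgroup $U\leqslant G$ still satisfies $f|_U(xg)=f|_U(x)\circ g+f|_U(g)$ for all $x,g\in U$.

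First, I would invoke Proposition \ref{sylow p subg-2G2} to parametrise a generic element of $U$ as $Y(t_1,t_3,t_4)$ with $t_1,t_3,t_4\in\mathbb{F}_q$, and record from the explicit matrix display there the entries in the positions indexed by $J$. Reading off the first row of the matrix, these are
\begin{align*}
Y(t_1,t_3,t_4)_{1,2}=t_1,\qquad Y(t_1,t_3,t_4)_{1,3}=-t_3,\qquad Y(t_1,t_3,t_4)_{1,4}=t_1t_3-t_4.
\end{align*}
Since $f|_U(Y(t_1,t_3,t_4))=\pi(Y(t_1,t_3,t_4))$ retains exactly these three entries, one has
\begin{align*}
f|_U\bigl(Y(t_1,t_3,t_4)\bigr)=t_1e_{1,2}-t_3e_{1,3}+(t_1t_3-t_4)e_{1,4}\in V.
\end{align*}

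Next, I would prove the map $\mathbb{F}_q^3\to V$, $(t_1,t_3,t_4)\mapsto(t_1,-t_3,t_1t_3-t_4)$ is bijective by inverting it triangularly: given $A=a_{1,2}e_{1,2}+a_{1,3}e_{1,3}+a_{1,4}e_{1,4}\in V$, set $t_1=a_{1,2}$, $t_3=-a_{1,3}$, $t_4=t_1t_3-a_{1,4}=-a_{1,2}a_{1,3}-a_{1,4}$. These coordinates are uniquely determined, so the map is a bijection, and composing with the parametrisation $\mathbb{F}_q^3\xrightarrow{\sim}U$ yields the bijectivity of $f|_U$. As a sanity check one may observe $|U|=q^3=|V|$, which rules out any counting surprise.

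Finally, for the second sentence, I would note that Proposition \ref{f(xg)-2G2} establishes that $f\colon G\to V$ is a 1-cocycle with respect to the action $-\circ-$ of $G$ on $V$ from Proposition \ref{circ action-2G2}; restricting the cocycle identity $f(xg)=f(x)\circ g+f(g)$ to $x,g\in U$ gives the cocycle identity for $f|_U$, and combined with bijectivity this is the claim of Definition \ref{1-cocycle} restricted to $U$. The only computational obstacle is the initial identification of the $J$-entries in the matrix of $Y(t_1,t_3,t_4)$; once those three entries are correct, the triangular structure of the resulting formula makes bijectivity transparent.
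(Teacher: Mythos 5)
Your proposal is correct and follows essentially the same route as the paper, whose proof simply cites Proposition \ref{sylow p subg-2G2} for bijectivity and Proposition \ref{f(xg)-2G2} for the cocycle identity restricted to $U$. You merely make explicit what the paper leaves implicit: reading off the $J$-entries $(t_1,\,-t_3,\,t_1t_3-t_4)$ from the matrix form and inverting this triangular map, which is a correct and complete elaboration.
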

\begin{proof}
By \ref{sylow p subg-2G2}, $f|_U$ is bijective.
By \ref{f(xg)-2G2}, $f|_U$ is a 1-cocycle of $U$.
\end{proof}

\begin{Corollary}[Monomial linearisation for $A_8(q)$]
\label{mono. lin. G-2G2}
The map $f:=\pi|_G  \colon G \to  V$ is a surjective 1-cocycle of $G$ in $V$,
 and  $(f,{\kappa}|_{V\times V})$ is a monomial linearisation for $G=A_8(q)$.
\end{Corollary}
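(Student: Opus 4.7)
The plan is to harvest the pieces already proved and package them; no new computation is needed. The $1$-cocycle identity $f(xg)=f(x)\circ g+f(g)$ for all $x,g\in G$ is exactly the content of Proposition \ref{f(xg)-2G2}, which matches Definition \ref{1-cocycle} applied to the action $-\circ-$ of $G$ on $V$ furnished by Proposition \ref{circ action-2G2}. So the cocycle property is immediate.

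For surjectivity there are two equally short routes. The cleanest is to invoke the Sylow subgroup: Proposition \ref{f_U bij-2G2} shows that $f|_U \colon U \to V$ is a bijection, and since $U\leqslant G$ one has $f(G)\supseteq f(U)=V$. A direct alternative is to note that for any $A=A_{12}e_{1,2}+A_{13}e_{1,3}+A_{14}e_{1,4}\in V_J$ the matrix $I_8+A$ is upper unitriangular (the three positions in $J$ sit strictly above the diagonal), hence $I_8+A\in G=A_8(q)$ and obviously $\pi(I_8+A)=A$.

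To certify that $(f,\kappa|_{V\times V})$ is a monomial linearisation for $A_8(q)$ in the sense of \cite{Markus1}, I would then assemble the remaining structural ingredients, all of which are already in place: $V=V_J$ is a finite-dimensional $\mathbb{F}_q$-space, $\kappa|_{V\times V}$ is a non-degenerate symmetric $\mathbb{F}_q$-bilinear form (this was recorded immediately after Notation for $\pi$), the group $G$ acts on $V$ as $\mathbb{F}_q$-automorphisms via $A\circ g=\pi(Ag)$ (Proposition \ref{circ action-2G2}), and the companion action $-.-$ defined by $A.g=\pi(Ag^{-\top})$ is the $\kappa$-adjoint of $-\circ-$ (Corollaries \ref{circ g-circ gT-2G2} and \ref{action A dot g-2G2}). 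Combined with the bijective cocycle $f|_U$ from Proposition \ref{f_U bij-2G2}, this puts the data of Theorem \ref{Markus, monomial theorem} at our disposal.

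The only potential obstacle is expository rather than mathematical: the excerpt does not spell out the abstract definition of ``monomial linearisation'' from \cite{Markus1}, so one must be careful to list exactly the axioms required there and match each to a prior lemma. Once this bookkeeping is done, the corollary is a direct consequence of the chain Propositions \ref{circ action-2G2}, \ref{f(xg)-2G2}, \ref{f_U bij-2G2} together with Corollaries \ref{circ g-circ gT-2G2} and \ref{action A dot g-2G2}.
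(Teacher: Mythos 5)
Your proposal is correct and follows the same route the paper intends: the paper states this as an unproved corollary, treating it as immediate from Proposition \ref{f(xg)-2G2} (the cocycle identity via the action of Proposition \ref{circ action-2G2}) together with surjectivity, which as you note follows from the bijectivity of $f|_U$ in Proposition \ref{f_U bij-2G2} (or from the direct observation that $\pi(I_8+A)=A$ for $A\in V_J$), the non-degeneracy of $\kappa|_{V\times V}$ supplying the remaining datum required by the definition of a monomial linearisation in \cite{Markus1}. Your explicit bookkeeping of the axioms is if anything more careful than the paper's silent appeal to them.
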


\begin{Corollary}
\label{mono. lin. for U-2G2}
 $(f|_U,{\kappa}|_{V\times V})$
is a monomial linearisation
for ${{^2}G}^{syl}_2(q)$.
\end{Corollary}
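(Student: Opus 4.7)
The plan is to derive this Corollary directly from its two immediate predecessors: Corollary \ref{mono. lin. G-2G2}, which establishes that the pair $(f, \kappa|_{V\times V})$ is a monomial linearisation for the ambient group $G = A_8(q)$, and Proposition \ref{f_U bij-2G2}, which asserts that $f|_U$ is a bijective $1$-cocycle of $U$ in $V$. In essence, the statement for $U$ is obtained from the statement for $G$ by restriction along $U\leqslant G$, with no new ingredients required.

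First I would check that each component of a monomial linearisation is in place for $U$. The target space $V=V_J$, the $G$-action $-\circ-$ on $V$ from Proposition \ref{circ action-2G2}, and the non-degenerate symmetric bilinear form $\kappa|_{V\times V}$ are all independent of the subgroup being considered, so they transfer to the $U$-setting without modification. The map $f|_U = \pi|_U$ is a $1$-cocycle of $U$ in $V$ because it is the restriction of the $G$-cocycle $f$ from Proposition \ref{f(xg)-2G2}, and the cocycle identity $f|_U(xg)=f|_U(x)\circ g+f|_U(g)$ for $x,g\in U$ holds pointwise and therefore survives restriction. Surjectivity of $f|_U$ is immediate from the bijectivity supplied by Proposition \ref{f_U bij-2G2}.

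The genuine work underlying the Corollary has in fact already been carried out: it is Proposition \ref{sylow p subg-2G2}, whose explicit matrix description of $U={{^2}G}^{syl}_2(3^{2m+1})$ shows that every element is parametrised by a triple $(t_1,t_3,t_4)\in \mathbb{F}_q^3$ whose values (up to sign) appear precisely in the entries of positions $J=\{(1,2),(1,3),(1,4)\}$, so that $\pi|_U$ is manifestly a bijection onto $V=V_J$. Once this is known, the present Corollary is a formal consequence, asserting that the pair $(f|_U,\kappa|_{V\times V})$ fits Jedlitschky's framework and thus makes Theorem \ref{Markus, monomial theorem} applicable to $U$, producing the monomial $\mathbb{C}U$-module structure on $\mathbb{C}U$ that the section sets out to construct.

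There is no substantive obstacle to overcome here: every hypothesis of the definition of a monomial linearisation has already been verified in the preceding results, and what remains is a one-line observation combining Corollary \ref{mono. lin. G-2G2} with Proposition \ref{f_U bij-2G2}. If I wished to make the proof even more explicit, I would note additionally that the bijectivity of $f|_U$ automatically guarantees $|U|=|V|=q^3$, which is consistent with the order computation $|{{^2}G}^{syl}_2(q)|=q^3$ implicit in Proposition \ref{sylow p subg-2G2}.
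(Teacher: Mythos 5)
Your proposal is correct and follows exactly the route the paper intends: the corollary is an immediate consequence of Corollary \ref{mono. lin. G-2G2} together with Proposition \ref{f_U bij-2G2}, with the substantive bijectivity of $f|_U=\pi|_U$ already supplied by the explicit parametrisation in Proposition \ref{sylow p subg-2G2}. The paper offers no separate argument beyond this, so your write-up matches (and slightly elaborates) its reasoning.
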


\begin{Notation}\label{vartheta}
 Let $\vartheta \colon \mathbb{F}_q^+ \to  \mathbb{C}^*$ denote a fixed nontrivial
 linear character of the additive group $\mathbb{F}_q^+$
 of $\mathbb{F}_q$
 once and for all.
In particular, $\sum_{x\in \mathbb{F}_q^+}{\vartheta(x)}=0$.
\end{Notation}

\begin{Lemma}\label{hat V=hat chi_A}
Let $A, B\in V$ and $\hat\chi_A \colon V \to  \mathbb{C}^*:X \mapsto  \vartheta\big(\kappa(A,X)\big)$.
Then
$
 \hat{V}
 =\{\hat\chi_A  \mid  A\in V \}
 =\mathrm{Irr}(V)
$,
and $\hat\chi_A=\hat\chi_B \iff A=B$.
In particular,
 $(\hat\chi_A).g=\hat\chi_{(A.g)}$
  for all $A\in V$ and $g\in G$.
\end{Lemma}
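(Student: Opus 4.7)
The plan is to establish the lemma in four short stages. First, note that $V = V_J$ is a finite abelian group (an $\mathbb{F}_q$-vector space of dimension $3$), so by definition $\hat V = \mathrm{Hom}(V,\mathbb{C}^*) = \mathrm{Irr}(V)$. I would then verify that each $\hat\chi_A$ is a group homomorphism $V \to \mathbb{C}^*$: since $\kappa(A,-)\colon V\to \mathbb{F}_q$ is $\mathbb{F}_q$-linear and $\vartheta$ is a character of $\mathbb{F}_q^+$, the composite $X\mapsto \vartheta(\kappa(A,X))$ is additive.

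Second, I would prove injectivity of the assignment $A\mapsto \hat\chi_A$. If $\hat\chi_A=\hat\chi_B$, then setting $C:=A-B$ one has $\vartheta(\kappa(C,X))=1$ for every $X\in V$, i.e.\ the image of the $\mathbb{F}_q$-linear map $\kappa(C,-)\colon V\to \mathbb{F}_q$ lies in $\ker\vartheta$. The key observation here is that $\mathrm{im}\,\kappa(C,-)$ is an $\mathbb{F}_q$-subspace of $\mathbb{F}_q$, hence either $\{0\}$ or all of $\mathbb{F}_q$; since $\vartheta$ is nontrivial, $\ker\vartheta$ is a proper additive subgroup and cannot contain all of $\mathbb{F}_q$, forcing $\kappa(C,-)=0$. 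Non-degeneracy of $\kappa|_{V\times V}$ then yields $C=0$, so $A=B$. This is the only step where something beyond direct unwinding of definitions is needed, so I consider it the main (mild) obstacle; everything else is formal.

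Third, since $\{\hat\chi_A\mid A\in V\}\subseteq \hat V$ and by injectivity has cardinality $|V|=|\hat V|$, equality $\hat V=\{\hat\chi_A\mid A\in V\}$ follows by counting. This simultaneously proves the equivalence $\hat\chi_A=\hat\chi_B \iff A=B$.

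Finally, for the ``In particular'' claim, I would combine Definition/Lemma \ref{hatVdot} with Corollary \ref{action A dot g-2G2}. For any $X\in V$ and $g\in G$,
\begin{align*}
(\hat\chi_A.g)(X) = \hat\chi_A(X\circ g^{-1}) = \vartheta\bigl(\kappa(A, X\circ g^{-1})\bigr) = \vartheta\bigl(\kappa(A.g, X)\bigr) = \hat\chi_{A.g}(X),
\end{align*}
where the third equality uses the defining property $\kappa|_{V\times V}(A.g,X)=\kappa|_{V\times V}(A,X\circ g^{-1})$ of the action $-.-$ from Corollary \ref{action A dot g-2G2}. This completes the proof.
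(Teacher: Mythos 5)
Your proof is correct, and every step checks out: additivity of $\hat\chi_A$ from bilinearity of $\kappa$ and the character property of $\vartheta$, injectivity via the subspace argument (the image of $\kappa(A-B,-)$ is $\{0\}$ or $\mathbb{F}_q$) together with non-degeneracy of $\kappa|_{V\times V}$, the counting argument $|\{\hat\chi_A\}|=|V|=|\hat V|$, and the compatibility $(\hat\chi_A).g=\hat\chi_{A.g}$ from Definition/Lemma \ref{hatVdot} and Corollary \ref{action A dot g-2G2}. The paper states this lemma without proof, treating it as standard; your argument is exactly the standard one it implicitly relies on, so there is nothing to correct.
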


For $A\in V$, let $\chi_A:=\hat\chi_A f$.
Note that $\chi_{-A}(g)$ is equal to
the complex conjugate $\overline{\chi_A(g)}$ of ${\chi_A(g)}$
for all $A\in V$ and $g\in G$.
Now we establish the monomial $G$-module
$\mathbb{C}\left({{^2}G}^{syl}_2(3^{2m+1})\right)$.
\begin{Theorem}[Fundamental theorem for ${{^2}G}^{syl}_2(3^{2m+1})$]\label{fund thm U-2G2}
Let $G=A_8(q)$, $U={{^2}G}^{syl}_2(3^{2m+1})$
and
 \begin{equation*}
  [A]:=\frac{1}{|U|}\sum_{u\in U}{\overline{\chi_A(u)}u}
  =[\chi_{-A}|_U]
 \qquad \text{for all $A\in V$},
 \end{equation*}
where $\chi_{A}(u)=\vartheta\kappa(A,f(u))=\overline{\chi_{-A}(u)}$.
Then the set $\{[A] \mid A\in V\}$
forms a $\mathbb{C}$-basis for the complex group algebra $\mathbb{C}U$.
For all $g\in G,{\,} A\in V$,
let $[A]*g:=\chi_{A.g}(g)[A.g]=\vartheta\kappa(A.g,f(g))[A.g]$.
Then $\mathbb{C}U$ is a monomial $\mathbb{C}G$-module.
The restriction of the $*$-operation to $U$ is given by the usual right multiplication
  of $U$ on $\mathbb{C}U$, i.e.
\begin{align*}
 [A]*u=[A]u=\frac{1}{|U|}\sum_{y\in U}{\overline{\chi_A(y)}yu},
 \qquad \text{for all $u\in U,{\,} A\in V$}.
\end{align*}
\end{Theorem}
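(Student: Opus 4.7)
The plan is to deduce the theorem from the general framework of Section \ref{sec:The construction of monomial modules}, specialized to $G = A_8(q)$, $U = {^2}G^{syl}_2(3^{2m+1})$, $V = V_J$, the surjective 1-cocycle $f = \pi|_G$, and its bijective restriction $f|_U$ (Proposition \ref{f_U bij-2G2}). The key translation device is Lemma \ref{hat V=hat chi_A}, which parametrizes $\hat V$ by $V$ via $A \mapsto \hat\chi_A$; this converts everything phrased in terms of $\hat\chi \in \hat V$ into statements indexed by $A \in V$.

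For the basis claim, I would invoke Corollary \ref{f|U*} to obtain that $\{\hat\chi_A f|_U \mid A \in V\}$ is a $\mathbb{C}$-basis of $\mathbb{C}^U$, and then apply the $\mathbb{C}$-isomorphism $\varPhi\colon\mathbb{C}^U \to \mathbb{C}U$ to conclude that $\{[\chi_A|_U] \mid A \in V\}$ is a $\mathbb{C}$-basis of $\mathbb{C}U$. Since $A \mapsto -A$ is a bijection on $V$ and $[A] = \tfrac{1}{|U|}[\chi_{-A}|_U]$ differs from $[\chi_{-A}|_U]$ only by the nonzero scalar $1/|U|$, the rescaled family $\{[A] \mid A \in V\}$ is still a $\mathbb{C}$-basis of $\mathbb{C}U$.

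For the $\mathbb{C}G$-module structure, I would apply Theorem \ref{Markus, monomial theorem} to equip $\mathbb{C}U$ with the monomial action
\begin{align*}
[\hat\chi f|_U] * g \;=\; \hat\chi\bigl(f(g^{-1})\bigr) \cdot [(\hat\chi.g)\, f|_U],
\end{align*}
and then verify that, after reindexing $\hat\chi = \hat\chi_{-A}$ and rescaling by $1/|U|$, this reproduces $[A]*g = \chi_{A.g}(g)[A.g]$. For the index, Lemma \ref{hat V=hat chi_A} combined with the $\mathbb{F}_q$-linearity of $A \mapsto A.g$ (Corollary \ref{action A dot g-2G2}) gives $\hat\chi_{-A}.g = \hat\chi_{-(A.g)}$, so the resulting basis element is $[A.g]$. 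For the scalar, the identity $f(1)=0$ together with the 1-cocycle relation forces $f(g)\circ g^{-1} = -f(g^{-1})$, and combining this with the duality of Corollary \ref{action A dot g-2G2} yields
\begin{align*}
\hat\chi_{-A}\bigl(f(g^{-1})\bigr) \;=\; \vartheta\bigl(-\kappa(A, f(g^{-1}))\bigr) \;=\; \vartheta\kappa(A,\, f(g)\circ g^{-1}) \;=\; \vartheta\kappa(A.g,\, f(g)) \;=\; \chi_{A.g}(g),
\end{align*}
matching the asserted scalar. The remaining claim that the restricted $*$-action of $U$ on $\mathbb{C}U$ agrees with the regular right multiplication is then exactly the content of the Corollary (Monomial $\mathbb{C}U$-modules) following Theorem \ref{Markus, monomial theorem}.

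The only real obstacle is the sign bookkeeping: one must track two sign flips simultaneously, namely $A \leadsto -A$ in the correspondence $[A] \leftrightarrow [\chi_{-A}|_U]$, and $f(g^{-1}) \leadsto -f(g)\circ g^{-1}$ from the cocycle identity, and check that they cancel correctly so that $\hat\chi_{-A}(f(g^{-1}))$ is exactly $\chi_{A.g}(g)$ rather than its complex conjugate. Once this matching is made, the theorem is a direct specialization of the general monomial linearisation machinery.
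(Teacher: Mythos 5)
Your proposal is correct and follows essentially the same route as the paper: the paper's own proof simply notes that $(f,\kappa|_{V\times V})$ is a monomial linearisation with $f|_U$ bijective and then cites the general machinery (Jedlitschky's result, i.e.\ the content of Theorem \ref{Markus, monomial theorem} and its corollary), which is exactly the specialization you carry out. Your explicit sign check $\hat\chi_{-A}(f(g^{-1}))=\vartheta\kappa(A.g,f(g))=\chi_{A.g}(g)$ via the cocycle identity and Corollary \ref{action A dot g-2G2} is accurate and merely spells out what the paper leaves to the cited reference.
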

\begin{proof}
By \ref{mono. lin. G-2G2}, $(f,{\kappa}|_{V\times V})$ is a monomial linearisation for $G$
 satisfying that $f|_U$ is bijective (see \ref{f_U bij-2G2}).
By \ref{action A dot g-2G2}, $A.u=\pi(Au^{-\top})$.
Thus the theorem is obtained by \cite[2.1.35]{Markus1}.
\end{proof}

\begin{Comparison}[Monomial linearisations]
\label{com:monomial modules-3D4}
Let $U$ be
$A_n(q)$, $D_n^{syl}(q)$,
${^3}D_4^{syl}(q^3)$,
$G_2^{syl}(q)$
($q$  is a fixed power of some odd prime)
or $^2G_2^{syl}(q)$ $(q=3^{2m+1})$,
$G$ an intermediate group of $U$,
$V_0:=V_{\Squarb}$,
$V$ a subspace of $V_0$,
$J:=\mathrm{supp}(V)$,
$f\colon G\to V$ a surjective 1-cocycle of $G$
such that $f|_U$ is injective,
$\kappa\colon V \times V\to \mathbb{F}_q \text{ (or $\mathbb{F}_{q^3}$)}$
a trace form
such that
$(f,\kappa|_{V\times V})$ is a monomial linearisation for $G$.
Then the
monomial linearisations $(f|_U,{\kappa}|_{V\times V})$ for
$A_n(q)$ (see \cite[\S 2.2]{Markus1}),
$D_n^{syl}(q)$ (see \cite[\S 3.1]{Markus1}),
${^3}D_4^{syl}(q^3)$ (see \cite[\S 4]{sun3D4super}),
$G_2^{syl}(q)$ (see \cite[\S 3]{sunG2}),
and $^2G_2^{syl}(3^{2m+1})$  (see \S \ref{sec: monomial 2G2-module})
are listed in Table \ref{table:monomial linearisations}.
\begin{table}[!htp]
\caption{Monomial linearisations $(f|_U,{\kappa}|_{V\times V})$}
\label{table:monomial linearisations}
\begin{align*}
\begin{array}{|l|l|l|l|l|l|l|}
\hline
\multicolumn{1}{|c|}{U}
& \multicolumn{1}{c|}{G}
& \multicolumn{1}{c|}{V_0}
& \multicolumn{1}{c|}{J}
& \multicolumn{1}{c|}{V}
& \multicolumn{1}{c|}{f\colon G\to V}
& \multicolumn{1}{c|}{\kappa|_{V\times V}} \\\hline
A_n(q)
& A_{n}(q)
& \mathrm{Mat}_{n\times n}(q)
& \UR
& V=V_{\URb}
& f(g)=\pi_{\URb}(g)=g-I_n
& \kappa|_{V\times V}
\\\hline
D_n^{syl}(q)
& A_{2n}(q)
& \mathrm{Mat}_{2n\times 2n}(q)
& {\UP}
& V=V_{\UPb}
& f(g)=\pi_{\UPb}(g)
& \kappa|_{V\times V}
\\\hline
{^3}D_4^{syl}(q^3)
& G_8(q^3)
& \mathrm{Mat}_{8\times 8}(q^3)
& \tilde{J}
& V\neq V_{\tilde{J}}
& f(g)\neq \pi_{\tilde{J}}(g)
& \kappa_q|_{V\times V}
\\\hline
G_2^{syl}(q)
& G_8(q)
& \mathrm{Mat}_{8\times 8}(q)
& {\tilde{J}}
& V\neq V_{\tilde{J}}
& f(g)\neq \pi_{\tilde{J}}(g)
& \kappa|_{V\times V}
\\\hline
^2G_2^{syl}(3^{2m+1})
& A_8(3^{2m+1})
& \mathrm{Mat}_{8\times 8}(3^{2m+1})
& J
& V= V_J
& f(g)=\pi(g)= \pi_J(g)
& \kappa|_{V\times V}
\\\hline
\end{array}
\end{align*}
\end{table}
\end{Comparison}

From now on, we mainly consider the
regular right module
$(\mathbb{C}U,*)_{\mathbb{C}U}=\mathbb{C}U_{\mathbb{C}U}$.


\section{${^2}G_2^{syl}(q)$-orbit modules}
\label{sec:U-orbit modules-2G2}
Let
$p:=3$, $q:=3^{2m+1} {\ }(m\in \mathbb{N})$,
$U:={^2}G_2^{syl}(q)$, $A\in V$,
and
$y_i(t_i)\in U$, $t_i\in \mathbb{F}_{q}$
($i=t_1,t_3,t_4$).
In this section,
we determine the stabilizers $\mathrm{Stab}_U(A)$ for all $A\in V$ (\ref{prop: 2G2-stab})
and obtain a classification of $U$-orbit modules (\ref{prop:class orbit-2G2}).

For $A\in V$, the $U$\textbf{-orbit module}
 \index{orbit module}
 associated to $A$ is
$\mathbb{C}\mathcal{O}_U([A])
:=\mathbb{C}\{[A]u \mid u\in U\}
=\mathbb{C}\{[A.u] \mid u\in U\}$,
and the orbit modules are in fact modules due to the monomial action in Theorem \ref{fund thm U-2G2}.
Then $\mathbb{C}\mathcal{O}_U([A])$ has a $\mathbb{C}$-basis
$
\{[A.u] \mid  u\in U\}
          = \left\{[C] \mid C\in \mathcal{O}_U(A)\right\}$,
where
$\mathcal{O}_U(A):=\left\{A.g \mid  g\in U\right\}$
is the \textbf{orbit} of $A$ under the operation $-.-$ defined in \ref{action A dot g-2G2}.
The \textbf{stabilizer} \index{stabilizer!-of $A$}
 $\mathrm{Stab}_U(A)$ of $A$ in $U$ is
$\mathrm{Stab}_U(A)=\{u\in U  \mid  A.u=A\}$.
 Two $\mathbb{C}U$-modules having no nontrivial
 $\mathbb{C}U$-homomorphism between them are called
 \textbf{orthogonal}.
 Set $\tilde{x}_{ij}(t)= I_8+t e_{ij}\in A_8(q)$ $(1\leq i, j \leq 8)$.

\begin{Lemma}\label{2G2-A.xi, figures}
Let $A\in V$, $Y(t_1,t_3,t_4)\in U$ and $t_i \in \mathbb{F}_{q}$ with $i\in\{1,3,4\}$.
Then $A.Y(t_1,t_3,t_4)$
and
the corresponding figure of moves
are determined as follows.
 \begin{align*}
  A.Y(t_1,t_3,t_4)
= A.\big(y_2(t_1^{3\theta})y_1(t_1)y_3(t_3)\big)
= A.\big(\tilde{x}_{23}(t_1^{3\theta})\tilde{x}_{34}(t_1)\tilde{x}_{24}(t_3)\big).
 \end{align*}
\begin{align*}
\begin{tikzpicture}[scale=0.90]
\draw[step=0.5cm, gray, very thin](-2, -2)grid(2, 2);
\draw (-1.75,1.75) node{$\bullet$};
\draw (-1.25,1.25) node{$\bullet$};
\draw (-0.75,0.75) node{$\bullet$};
\draw (-0.25,0.25) node{$\bullet$};
\draw (0.25,-0.25) node{$\bullet$};
\draw (0.75,-0.75) node{$\bullet$};
\draw (1.25,-1.25) node{$\bullet$};
\draw (1.75,-1.75) node{$\bullet$};
\draw (1.75,1.75) node{$\bullet$};
\draw (1.25,1.25) node{$\bullet$};
\draw (0.75,0.75) node{$\bullet$};
\draw (0.25,0.25) node{$\bullet$};
\foreach \x in {-1.5, -1, -0.5}
{
\draw [very thick] (\x,1.5)   rectangle +(0.5,0.5);
}
\draw[->] (-0.80,2)--(-0.80, 2.25)--(-1.25,2.25)--(-1.25,2.01);
\draw (-1.00,2.5) node{$-t_1^{3\theta}$};
\draw[->] (-0.30,2.0)--(-0.30, 2.90)--(-0.58,2.90)--(-0.58,2.01);
\draw (-0.50,3.10) node{$-t_1$};
\draw[->] (-0.15,2)--(-0.15, 3.40)--(-1.43,3.40)--(-1.43,2.01);
\draw (-0.75,3.60) node{$-t_3$};
\draw(0,-2.5) node{$A.Y(t_1,t_3,t_4)$};
\end{tikzpicture}
\end{align*}
The figure describes the way of classifying the orbits.
\end{Lemma}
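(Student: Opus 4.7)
The plan is to reduce the action $A.Y(t_1,t_3,t_4)=\pi(AY(t_1,t_3,t_4)^{-\top})$ to a computation on a single $3\times 3$ block. Writing $A=\sum_{j\in\{2,3,4\}}A_{1,j}e_{1,j}\in V=V_J$, for any $g\in G$ and any $k\in\{2,3,4\}$ one has
\[
(A.g)_{1,k}=(Ag^{-\top})_{1,k}=\sum_{j\in\{2,3,4\}}A_{1,j}\,(g^{-1})_{k,j},
\]
so $A.g$ depends only on the $3\times 3$ submatrix of $g^{-1}$ with row and column indices in $\{2,3,4\}$.

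Next I would use the fact that for an upper unitriangular $g$ the recursion $(g^{-1})_{i,j}=-\sum_{i<k\leq j}g_{i,k}(g^{-1})_{k,j}$ shows by induction on $j-i$ that $(g^{-1})_{i,j}$ is a polynomial in the entries $g_{k,l}$ with $i\leq k\leq l\leq j$. Consequently the $3\times 3$ block of $g^{-1}$ at rows and columns $\{2,3,4\}$ depends only on the $3\times 3$ block of $g$ at those same indices. It therefore suffices to show that the three elements
\[
Y(t_1,t_3,t_4),\qquad y_2(t_1^{3\theta})y_1(t_1)y_3(t_3),\qquad \tilde{x}_{23}(t_1^{3\theta})\tilde{x}_{34}(t_1)\tilde{x}_{24}(t_3)
\]
have the same $3\times 3$ block at $\{2,3,4\}$.

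I would verify this by inspecting the Chevalley basis vectors $e_r$ listed in Section \ref{sec:2G2-1}. Among the six positive-root matrices, only $e_\beta$, $e_\alpha$ and $e_{\alpha+\beta}$ carry a nonzero entry inside the $\{2,3,4\}$ block, and those entries sit in positions $(2,3)$, $(3,4)$ and $(2,4)$ respectively; the remaining vectors $e_{2\alpha+\beta}$, $e_{3\alpha+\beta}$ and $e_{3\alpha+2\beta}$ contribute nothing to this block. A short case check on each $e_r^2$ further shows that no quadratic term $\tfrac12 t^2 e_r^2$ in $\exp(te_r)$ produces any entry in the $\{2,3,4\}$ block. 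Hence inside every factor of $Y(t_1,t_3,t_4)=a(t_1)b(t_3)c(t_4)$, only the summands $y_2(t_1^{3\theta})$, $y_1(t_1)$ and $y_3(t_3)$ affect the block, and multiplying $I+t_1^{3\theta}e_{2,3}$, $I+t_1 e_{3,4}$ and $I+t_3 e_{2,4}$ in this order yields the common block
\[
\begin{pmatrix} 1 & t_1^{3\theta} & t_1^{3\theta+1}+t_3 \\ 0 & 1 & t_1 \\ 0 & 0 & 1 \end{pmatrix},
\]
which also matches the corresponding submatrix of $Y(t_1,t_3,t_4)$ read off from Proposition \ref{sylow p subg-2G2}.

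The main obstacle is precisely the bookkeeping in the previous paragraph: one must rule out every factor outside $y_1, y_2, y_3$ and every square $e_r^2$ on the $\{2,3,4\}$ block. Once that is in place, the figure is read off from the inverse of the displayed block, whose off-diagonal entries $-t_1^{3\theta}$, $-t_1$, $-t_3$ are exactly the three arrows shown, contributing $-t_1^{3\theta}A_{1,3}$ and $-t_3 A_{1,4}$ to $(A.Y(t_1,t_3,t_4))_{1,2}$ and $-t_1 A_{1,4}$ to $(A.Y(t_1,t_3,t_4))_{1,3}$.
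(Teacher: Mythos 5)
Your proposal is correct. The paper itself gives no written proof of this lemma --- it is treated as a direct matrix computation, reading the action off the explicit $8\times 8$ matrix of $Y(t_1,t_3,t_4)$ in Proposition \ref{sylow p subg-2G2} and the definition $A.g=\pi(Ag^{-\top})$ --- so your argument is a genuinely different, more structured route. Your reduction is sound: since $A\in V_J$ has support in row $1$, columns $2,3,4$, and $\pi$ only retains those positions, $(A.g)_{1,k}=\sum_{j\in\{2,3,4\}}A_{1,j}(g^{-1})_{k,j}$ indeed depends only on the principal $\{2,3,4\}$ block of $g^{-1}$; the unitriangular recursion shows this block of $g^{-1}$ is determined by the same block of $g$; and your inspection of the Chevalley basis correctly identifies that only $e_\beta,e_\alpha,e_{\alpha+\beta}$ (at positions $(2,3),(3,4),(2,4)$) and none of the quadratic terms $\tfrac12 t^2e_r^2$ touch that block, so all three group elements share the block $\left(\begin{smallmatrix}1 & t_1^{3\theta} & t_1^{3\theta+1}+t_3\\ 0&1&t_1\\ 0&0&1\end{smallmatrix}\right)$, whose inverse yields exactly the moves $-t_1^{3\theta}$, $-t_1$, $-t_3$ of the figure and reproduces the formula of Proposition \ref{prop: 2G2-orbit}. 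The only step you leave implicit is that taking the contiguous principal $\{2,3,4\}$ submatrix is multiplicative on upper (uni)triangular matrices, which is what lets you multiply blocks factor by factor in $a(t_1)b(t_3)c(t_4)$; this is immediate because any intermediate summation index $k$ with $2\le i\le k\le j\le 4$ again lies in $\{2,3,4\}$, but it deserves a sentence. What your approach buys is a conceptual explanation of why the $y_4,y_5,y_6$ factors (and hence $t_4$ and the twisted tail entries) are invisible to the orbit action, which the paper's brute-force computation only exhibits after the fact; what the paper's route buys is that the explicit matrix of Proposition \ref{sylow p subg-2G2} is already available, so no auxiliary block bookkeeping is needed.
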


The elements of $V$  are called \textbf{patterns}.
Let $A\in V$. Then
 $(i,j)\in J$ is a \textbf{main condition} \index{main condition} of $A$
if and only if $A_{ij}$ is the rightmost non-zero entry in the $i$-th row.
We set
$\mathrm{main}(A):=
 \{ (i,j)\in J  \mid  (i,j) \text{ is a main condition of }A\}$.
The \textbf{verge} of $A\in V$ is
$\mathrm{verge}(A):=
  \sum_{(i,j)\in \mathrm{main}(A)}{A_{i,j}e_{i,j}}$.
The  pattern $A\in V$ is called the \textbf{verge pattern}
if $A=\mathrm{verge}(A)$.

\begin{Notation}\label{family of orbit modules-2G2}
Define the families of $U$-orbit modules related to the main condition in the 1st row as follows:
$\mathfrak{F}_4:= \{\mathbb{C}\mathcal{O}_U([A]) \mid A\in V,{\,} A_{14}\neq 0\}$,
$\mathfrak{F}_3:= \{\mathbb{C}\mathcal{O}_U([A]) \mid A\in V,{\,} A_{13}\neq 0,{\,} A_{14}= 0\}$,
and
1-dimensional orbit modules
$\mathfrak{F}_{1}:= \{\mathbb{C}\mathcal{O}_U([A]) \mid A\in V,{\,} A_{12}\neq 0,{\,} A_{13}= A_{14}= 0\}$.
For $A\in V$, we also say $A\in \mathfrak{F}_i$
if $\mathbb{C}\mathcal{O}_U([A])\in \mathfrak{F}_i$.
\end{Notation}

\begin{Proposition}[$^{2}G_2^{syl}(q)$-orbit modules]\label{prop: 2G2-orbit}
For $A=(A_{ij})\in V$,
the $U$-orbit module $\mathbb{C}\mathcal{O}_U([A])$ is determined as follows.
\begin{align*}
&\mathbb{C}\mathcal{O}_U(
[A_{12}e_{12}+A_{13}e_{13}+A_{14}e_{14}])\\
=& \mathbb{C}
\{ [(A_{12}
-A_{13}t_1^{3\theta}
-A_{14}t_3)e_{12}
+
(A_{13}
-A_{14}t_1)e_{13}
+
A_{14}e_{14}
]
\mid
t_1, t_3\in \mathbb{F}_{q}\}.
\end{align*}
In particular,
every $U$-orbit module
contains
precisely one verge pattern.
\end{Proposition}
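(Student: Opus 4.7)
The plan is to apply Lemma~\ref{2G2-A.xi, figures} to reduce the action $A.Y(t_1,t_3,t_4)$ on $V$ to the successive action of the three elementary matrices $\tilde{x}_{23}(t_1^{3\theta})$, $\tilde{x}_{34}(t_1)$, $\tilde{x}_{24}(t_3)$, and then to exhibit a unique verge representative in each orbit by case analysis.

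First, using $A.g = \pi(Ag^{-\top})$ from Corollary~\ref{action A dot g-2G2}, I compute the effect of each elementary factor. For $\tilde{x}_{ij}(t)$, one has $\tilde{x}_{ij}(t)^{-\top} = I_8 - te_{ji}$, and for $A \in V = V_J$ only the entry $A_{1j}$ contributes to $Ae_{ji}$, producing $-tA_{1j}e_{1i}$, which lies in $V$ precisely because $(1,i) \in J$ in the three cases we need. Applying the three factors in the order specified by Lemma~\ref{2G2-A.xi, figures} realizes the three moves depicted in the figure: $\tilde{x}_{23}(t_1^{3\theta})$ subtracts $t_1^{3\theta}A_{13}$ from $A_{12}$; then $\tilde{x}_{34}(t_1)$ subtracts $t_1 A_{14}$ from the current $A_{13}$; finally $\tilde{x}_{24}(t_3)$ subtracts $t_3 A_{14}$ from the current $A_{12}$. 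Composing the three substitutions yields exactly the claimed parametrization of $\mathbb{C}\mathcal{O}_U([A])$, and incidentally explains why $t_4$ does not appear: the factors $y_4, y_5, y_6$ do not affect any first-row entry at columns $2,3,4$.

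Second, for the existence and uniqueness of a verge pattern in each orbit, I split into cases according to the family from Notation~\ref{family of orbit modules-2G2}. If $A_{14} \neq 0$ (so $A \in \mathfrak{F}_4$), I set $t_1 = A_{13}/A_{14}$ to kill the $e_{13}$ coefficient, then $t_3 = (A_{12} - A_{13}t_1^{3\theta})/A_{14}$ to kill the $e_{12}$ coefficient; these $(t_1,t_3)$ are uniquely determined, giving the unique verge $A_{14}e_{14}$ in the orbit. If $A_{14}=0$ and $A_{13} \neq 0$ (so $A \in \mathfrak{F}_3$), the parameter $t_3$ drops out and one must solve $t_1^{3\theta} = A_{12}/A_{13}$, which has a unique solution in $\mathbb{F}_q$ because $t \mapsto t^{3\theta} = t^{3^{m+1}}$ is a power of the Frobenius automorphism of $\mathbb{F}_q = \mathbb{F}_{3^{2m+1}}$ and therefore bijective; the verge is $A_{13}e_{13}$. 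If $A_{13} = A_{14} = 0$, the parametrization collapses to the singleton $\{A\}$, which is already verge.

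The only step that is not purely mechanical is the $\mathfrak{F}_3$ case, where the bijectivity of the Frobenius-type map $t \mapsto t^{3\theta}$ on $\mathbb{F}_q$ must be invoked to get existence and uniqueness of $t_1$; the remaining steps are direct matrix computations backed by Lemma~\ref{2G2-A.xi, figures}.
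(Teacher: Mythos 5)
Your proposal is correct and follows essentially the same route as the paper: the paper's proof simply invokes Lemma \ref{2G2-A.xi, figures} and computes the orbit directly, which is exactly what you carry out, with the composed substitutions giving the stated $e_{12}$-, $e_{13}$-, $e_{14}$-coefficients. Your additional case analysis for the verge statement (including the bijectivity of $t\mapsto t^{3\theta}$ on $\mathbb{F}_q$ in the $\mathfrak{F}_3$ case) is just the explicit form of the paper's ``straightforward calculation'' and is accurate.
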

\begin{proof}
 By \ref{2G2-A.xi, figures},
 we calculate the orbit modules directly.
\end{proof}

\begin{Proposition}
\label{prop: 2G2-stab}
Let $A=(A_{ij})\in V$.
\begin{itemize}
\setlength\itemsep{0em}
 \item [(1)] If $A\in \mathfrak{F}_{1}$,
             then $\mathrm{Stab}_U(A)=U={^2}G_2^{syl}(q)$.
 \item [(2)] If  $A\in \mathfrak{F}_{3}$ and $A_{13}=A_{13}^*\in \mathbb{F}_{q}^*$,
             then $ \mathrm{Stab}_U(A)=\{Y(0,t_3,t_4)\mid t_3, t_4 \in \mathbb{F}_q\}$.
 \item [(3)] If  $A\in \mathfrak{F}_{4}$ and $A_{14}=A_{14}^*\in \mathbb{F}_{q}^*$,
 then
 $ \mathrm{Stab}_U(A)=\{Y(0,0,t_4)\mid t_4 \in \mathbb{F}_q\}$.
\end{itemize}
\end{Proposition}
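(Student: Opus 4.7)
The plan is to read off the stabiliser directly from the orbit formula in Proposition \ref{prop: 2G2-orbit}, which gives
\begin{align*}
A.Y(t_1,t_3,t_4)
= (A_{12}-A_{13}t_1^{3\theta}-A_{14}t_3)e_{12}
+(A_{13}-A_{14}t_1)e_{13}
+A_{14}e_{14}
\end{align*}
for $A=A_{12}e_{12}+A_{13}e_{13}+A_{14}e_{14}\in V$. Since $\mathrm{Stab}_U(A)=\{Y(t_1,t_3,t_4)\in U\mid A.Y(t_1,t_3,t_4)=A\}$, the problem reduces to solving the system
\begin{align*}
A_{13}t_1^{3\theta}+A_{14}t_3=0,\qquad A_{14}t_1=0
\end{align*}
in $\mathbb{F}_q$, with the coordinate $t_4$ unconstrained (it does not appear in the formula at all, reflecting the fact that the root subgroup $Y_4$ corresponds to entries below the main conditions of $V$).

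First I would handle case (1). If $A\in \mathfrak{F}_1$, then $A_{13}=A_{14}=0$, so both equations are vacuous. Hence every $Y(t_1,t_3,t_4)\in U$ fixes $A$, and $\mathrm{Stab}_U(A)=U$. Next, for case (2), assume $A\in \mathfrak{F}_3$ with $A_{14}=0$ and $A_{13}=A_{13}^*\in \mathbb{F}_q^*$. The second equation is vacuous, while the first becomes $A_{13}^* t_1^{3\theta}=0$, which, since $A_{13}^*\neq 0$ and the Frobenius map $t\mapsto t^{3\theta}$ is injective on $\mathbb{F}_q$, forces $t_1=0$. Hence $\mathrm{Stab}_U(A)=\{Y(0,t_3,t_4)\mid t_3,t_4\in \mathbb{F}_q\}$, as stated. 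Finally, for case (3), assume $A\in \mathfrak{F}_4$ with $A_{14}=A_{14}^*\in \mathbb{F}_q^*$. The second equation $A_{14}^* t_1=0$ forces $t_1=0$, and substituting into the first equation yields $A_{14}^* t_3=0$, which in turn forces $t_3=0$. Thus $\mathrm{Stab}_U(A)=\{Y(0,0,t_4)\mid t_4\in \mathbb{F}_q\}$.

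There is no real obstacle here; the work was already done in Proposition \ref{prop: 2G2-orbit} (which in turn rests on Lemma \ref{2G2-A.xi, figures}), and the present statement is essentially a case-by-case reading of linear conditions on the parameters. The only point worth remarking on is the use of the bijectivity of $t\mapsto t^{3\theta}$ on $\mathbb{F}_q$ (which holds because $\gcd(3\theta,q-1)=1$, since $3\theta$ is a power of the characteristic), so that the equation $A_{13}^* t_1^{3\theta}=0$ genuinely forces $t_1=0$ in case (2). With that observation in place, the three cases are dispatched by inspection.
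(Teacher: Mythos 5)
Your proof is correct and follows essentially the same route as the paper: the paper's proof simply invokes Proposition \ref{prop: 2G2-orbit} and solves $A.Y(t_1,t_3,t_4)=A$ by direct calculation, which is exactly the linear system you write down and analyse case by case (and your remark that $t\mapsto t^{3\theta}$ is injective — indeed $A_{13}^*t_1^{3\theta}=0$ forces $t_1=0$ already because $\mathbb{F}_q$ has no zero divisors — is a harmless extra observation).
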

\begin{proof}
 By \ref{prop: 2G2-orbit}, the stabilizers are obtained by straightforward calculation.
\end{proof}

Let $A, B \in V$,
$\mathrm{Stab}_U(A, B):=\mathrm{Stab}_U(A)\cap \mathrm{Stab}_U(B)$,
$\psi_A$ be the character of $\mathbb{C}\mathcal{O}_U([A])$
and $\psi_B$ denote the character of $\mathbb{C}\mathcal{O}_U([B])$.
Then
$\mathrm{Hom}_{\mathbb{C}U}(\mathbb{C}\mathcal{O}_U([A]),\mathbb{C}\mathcal{O}_U([B]))=\{0\}$
if and only if
$\mathrm{Hom}_{\mathrm{Stab}_U(C, D)}(\mathbb{C}[C],\mathbb{C}[D])=\{0\}$
for all $C\in \mathcal{O}_U(A)$ and $D\in \mathcal{O}_U(B)$.
 If $A, B\in V$,
then
$
\langle \psi_A, \psi_B\rangle_{U}
=\sum_{D\in \mathcal{O}_U(B)}
    \frac{|\mathrm{Stab}_U(A, D)|}
    {|\mathrm{Stab}_U(A)|}{\langle \chi_A, \chi_D\rangle_{\mathrm{Stab}_U(A, D)}}$,
where
$\chi_A$ and $\chi_D$ are the characters of the $\mathbb{C}\mathrm{Stab}_U(A, D)$-modules
$\mathbb{C}[A]$ and $\mathbb{C}[D]$ respectively.

\begin{Proposition}[Classification of ${^2}G_2^{syl}(q)$-orbit modules]\label{prop:class orbit-2G2}
Every $U$-orbit module is a verge pattern module in Table \ref{table:class orbit-2G2},
and the orbit modules satisfy the following properties.
\begin{itemize}
\setlength\itemsep{0em}
\item [(1)]
Let $A,B\in V$,
 $\mathrm{verge}(A)\neq \mathrm{verge}(B)$.
Then
$\mathrm{Hom}_{\mathbb{C}U}(\mathbb{C}\mathcal{O}_U([A]),\mathbb{C}\mathcal{O}_U([B]))=\{0\}$.
In particular, if $\mathbb{C}\mathcal{O}_U([A])\in \mathfrak{F}_i$,
$\mathbb{C}\mathcal{O}_U([B])\in \mathfrak{F}_j$ and $i\neq j$,
then
$\mathrm{Hom}_{\mathbb{C}U}(\mathbb{C}\mathcal{O}_U([A]),\mathbb{C}\mathcal{O}_U([B]))=\{0\}$.
\item [(2)]
In the family $\mathfrak{F}_{1}$, the $U$-orbit modules
are irreducible
and pairwise orthogonal.
\item [(3)]
In the family $\mathfrak{F}_{3}$ and $\mathfrak{F}_{4}$,
the $U$-orbit modules
are reducible.
\end{itemize}
\begin{table}[!htp]
\caption{${^2}G_2^{syl}(q)$-orbit modules}
\label{table:class orbit-2G2}
\begin{align*}
\begin{array}{|c|l|c|c|}\hline
\text{Family}
& \multicolumn{1}{c|}{\mathbb{C}\mathcal{O}_U([A]){\ } (A\in V)}
& \mathrm{dim}_{\mathbb{C}}\mathbb{C}\mathcal{O}_U([A])
& \text{Irreducible}\\\hline
\mathfrak{F}_4
& \mathbb{C}\mathcal{O}_U([A_{14}^*e_{14}]),
{\ }A_{14}\in \mathbb{F}_q^*
& q^2
& \text{NO} \\\hline
\mathfrak{F}_3
& \mathbb{C}\mathcal{O}_U([A_{13}^*e_{13}]),
{\ }A_{13}\in \mathbb{F}_q^*
& q
& \text{NO} \\\hline
\mathfrak{F}_1
& \mathbb{C}\mathcal{O}_U([A_{12}e_{12}]),
{\ }A_{12}\in \mathbb{F}_q
& 1
& \text{YES} \\\hline
 \end{array}
\end{align*}
\end{table}
\end{Proposition}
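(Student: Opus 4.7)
The plan is to exploit the inner-product formula
\[\langle \psi_A, \psi_B\rangle_U = \sum_{D \in \mathcal{O}_U(B)} \frac{|\mathrm{Stab}_U(A, D)|}{|\mathrm{Stab}_U(A)|}\, \langle \chi_A, \chi_D \rangle_{\mathrm{Stab}_U(A, D)}\]
recalled just before the statement, together with Propositions \ref{prop: 2G2-orbit} and \ref{prop: 2G2-stab}. The table itself is verified by direct counting: every orbit contains a unique verge pattern by \ref{prop: 2G2-orbit}, so one may take representatives $A_{14}^*e_{14}$, $A_{13}^*e_{13}$, $A_{12}e_{12}$; parameter counts in the orbit formula (and bijectivity of $t_1\mapsto t_1^{3\theta}$ on $\mathbb{F}_q$) give the dimensions $q^2$, $q$, $1$.

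For part (1), I first observe that each $\chi_A$ restricts to an honest linear character on $\mathrm{Stab}_U(A)$: combining the 1-cocycle identity $f(uv) = f(u)\circ v + f(v)$ of Definition \ref{1-cocycle} with the adjointness of Corollary \ref{circ g-circ gT-2G2}, the stabilizing condition $A.v^{-1}=A$ collapses $\chi_A(uv)$ to $\chi_A(u)\chi_A(v)$. It then suffices to show that $\chi_A\not\equiv \chi_D$ on $\mathrm{Stab}_U(A,D)$ for every $D\in \mathcal{O}_U(B)$, since then each summand of the inner-product formula vanishes by orthogonality of characters of an abelian group. The condition $\chi_A\equiv \chi_D$ on $\mathrm{Stab}_U(A,D)$ is equivalent to $\kappa(A-D,\, f(\mathrm{Stab}_U(A,D)))=\{0\}$, and this reduces to a short case analysis over the pairings of families: in each case the image $f(\mathrm{Stab}_U(A,D))$ is readily identified (it is $V$, $\mathbb{F}_q e_{13}+\mathbb{F}_q e_{14}$, or $\mathbb{F}_q e_{14}$), and the hypothesis $\mathrm{verge}(A)\neq \mathrm{verge}(B)$ guarantees that $A-D$ has a nonzero entry in a visible position.

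Part (2) is then immediate: orbits in $\mathfrak{F}_1$ are singletons, so $\mathbb{C}\mathcal{O}_U([A])$ is one-dimensional and hence irreducible, and pairwise orthogonality follows from part (1) since distinct values of $A_{12}$ give distinct verges. For part (3), I would use that the stabilizers $\{Y(0,t_3,t_4)\mid t_3,t_4\in\mathbb{F}_q\}$ and $\{Y(0,0,t_4)\mid t_4\in\mathbb{F}_q\}$ are normal in $U$, directly from the conjugation formula of Proposition \ref{prop:conjugacy classes of a,b,c-2G2}; consequently $\mathrm{Stab}_U(D)=\mathrm{Stab}_U(A)$ for every $D\in \mathcal{O}_U(A)$, and hence $\mathrm{Stab}_U(A,D)=\mathrm{Stab}_U(A)$. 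On this common stabilizer, the orbit formula shows that the $(1,3)$- and $(1,4)$-entries of $D$ agree with those of $A$ throughout the orbit, so $\chi_A\equiv \chi_D$. The inner-product formula then yields $\langle \psi_A,\psi_A\rangle_U = |\mathcal{O}_U(A)|\in \{q,q^2\}>1$, proving reducibility.

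The main obstacle is the case bookkeeping in parts (1) and (3): one must verify in each pairing of families which entries of $A-D$ are visible to $\kappa$ through $f$ on the relevant stabilizer. This is conceptually straightforward once the orbit and stabilizer tables are in hand, but each case requires a small verification grounded in the monomial action of Theorem \ref{fund thm U-2G2}.
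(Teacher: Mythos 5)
Your proposal is correct and follows essentially the paper's route: unique verges and the dimensions come from \ref{prop: 2G2-orbit}, orthogonality in (1) and reducibility in (3) are obtained from the stabilizer inner-product formula together with \ref{prop: 2G2-stab}, and (2) follows from one-dimensionality. The only deviations are minor: for $\mathfrak{F}_4$ the paper argues by a degree count (an irreducible module of dimension $q^2$ would force $q^4\leq |U|=q^3$, a contradiction) while you compute $\langle\psi_A,\psi_A\rangle_U=q^2$ directly, which is equally valid; and your remark that the $(1,3)$-entry of $D$ agrees with that of $A$ throughout the orbit is not literally true in the $\mathfrak{F}_4$ case, but this is harmless since on the common stabilizer $\{Y(0,0,t_4)\mid t_4\in\mathbb{F}_q\}$ only the $(1,4)$-entry is visible to $\chi_A$, and that entry is constant on the orbit.
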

\begin{proof}
 By \ref{prop: 2G2-orbit}, every $U$-orbit module is a verge pattern module in Table \ref{table:class orbit-2G2}.
 By calculating the corresponding inner products, (1) and the orthogonal properties are proved.
 The orbit modules in the family $\mathfrak{F}_1$ are 1-dimensional and irreducible, so (2) is obtained.

Let $A=A_{12}e_{12}+A_{13}^*e_{13}\in \mathfrak{F}_3$
and $C\in \mathcal{O}_U(A)$.
By \ref{prop: 2G2-stab},
$\mathrm{Stab}_U(A,C)=\mathrm{Stab}_U(A)
=\{Y(0,t_3,t_4) \mid t_3,t_4\in \mathbb{F}_q\}$.
The inner product is
$\langle \chi_A, \chi_C\rangle_{\mathrm{Stab}_U(A, C)}=1$.
Let $\psi_A$ denote the character of $\mathbb{C}\mathcal{O}_U([A])$.
Then
\begin{align*}
& \mathrm{dim}_{\mathbb{C}}
   \mathrm{Hom}_{\mathbb{C}U}(\mathbb{C}\mathcal{O}_U([A]),\mathbb{C}\mathcal{O}_U([A]))
= \langle \psi_A, \psi_A\rangle_{U}\\
{=}& \sum_{C\in \mathcal{O}_U(A)}
   \frac{|\mathrm{Stab}_U(A, C)|}{|\mathrm{Stab}_U(A)|}
   \mathrm{dim}_{\mathbb{C}}\mathrm{Hom}_{\mathrm{Stab}_U(A, C)}(\mathbb{C}[ A ],\mathbb{C}[ C ])
= q > 1.
\end{align*}
Thus,
$\mathbb{C}\mathcal{O}_U([A])$ is not irreducible.

We claim that the orbit module $\mathbb{C}\mathcal{O}_U([A])$ is reducible
if $A\in V$ is a pattern of the family $\mathfrak{F}_4$.
Suppose that it is irreducible.
Then
$\big(\dim_{C} \mathbb{C}\mathcal{O}_U([A])\big)^2=q^{4}< |U|=q^{3}$.
This is a contradiction.
Thus the orbit modules of the family $\mathfrak{F}_4$ are reducible.
\end{proof}

\begin{Corollary}
 If $A,B\in V$, then
 $\mathbb{C}\mathcal{O}_U([A])=\mathrm{Res}_U^G \mathbb{C}\mathcal{O}_G([A])$,
 and
 the two orbit modules $\mathbb{C}\mathcal{O}_U([A])$ and $\mathbb{C}\mathcal{O}_U([B])$ are either
 isomorphic or orthogonal.
\end{Corollary}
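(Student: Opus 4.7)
The plan is to derive both assertions from the results already established in this section, with the only substantive step being a short direct comparison of the $U$- and $G$-orbits in $V$.

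For the first equality, I would first verify that $\mathcal{O}_U(A) = \mathcal{O}_G(A)$ as subsets of $V$. Using Corollary \ref{action A dot g-2G2}, $A.g = \pi(Ag^{-\top})$; since $g^{-1}$ is upper unitriangular for $g \in G = A_8(q)$ and the support of $V$ is only $J = \{(1,2),(1,3),(1,4)\}$, a row-by-row calculation yields
\[
A.g = \bigl(A_{12} + A_{13}(g^{-1})_{2,3} + A_{14}(g^{-1})_{2,4}\bigr)e_{12} + \bigl(A_{13} + A_{14}(g^{-1})_{3,4}\bigr)e_{13} + A_{14}e_{14},
\]
with the three coefficients $(g^{-1})_{2,3},(g^{-1})_{2,4},(g^{-1})_{3,4}$ ranging independently over $\mathbb{F}_q$ as $g$ varies over $G$. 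Comparing this with the $U$-orbit formula in Proposition \ref{prop: 2G2-orbit}, and using that the Frobenius map $t \mapsto t^{3\theta}$ is a bijection of $\mathbb{F}_q$, a short cardinality check (split according to whether $A_{14}=0$, $A_{14}=0\neq A_{13}$, or $A_{13}=A_{14}=0$) shows that the two orbits have the same size; combined with the trivial inclusion $\mathcal{O}_U(A) \subseteq \mathcal{O}_G(A)$ this gives $\mathcal{O}_U(A) = \mathcal{O}_G(A)$.

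This identification immediately yields $\mathbb{C}\mathcal{O}_U([A]) = \mathbb{C}\mathcal{O}_G([A])$ as subspaces of $\mathbb{C}U$, since both have the common $\mathbb{C}$-basis $\{[C] \mid C \in \mathcal{O}_U(A) = \mathcal{O}_G(A)\}$. Because the $*$-action of $G$ on $\mathbb{C}U$ restricts on $U$ to ordinary right multiplication of $U$ on $\mathbb{C}U$ (by the Corollary immediately following Theorem \ref{fund thm U-2G2}), the equality also holds at the level of $U$-modules, i.e.\ $\mathbb{C}\mathcal{O}_U([A]) = \mathrm{Res}_U^G \mathbb{C}\mathcal{O}_G([A])$.

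For the dichotomy, I would split into two cases according to $\mathrm{verge}(A)$ and $\mathrm{verge}(B)$. If $\mathrm{verge}(A) = \mathrm{verge}(B)$, then by Proposition \ref{prop: 2G2-orbit} (every $U$-orbit contains exactly one verge pattern) the orbits of $A$ and $B$ coincide, so $\mathbb{C}\mathcal{O}_U([A]) = \mathbb{C}\mathcal{O}_U([B])$, which is trivially an isomorphism. If $\mathrm{verge}(A) \neq \mathrm{verge}(B)$, then Proposition \ref{prop:class orbit-2G2}(1) forces $\mathrm{Hom}_{\mathbb{C}U}(\mathbb{C}\mathcal{O}_U([A]),\mathbb{C}\mathcal{O}_U([B])) = \{0\}$, which is precisely orthogonality. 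I do not anticipate any real obstacle: the only piece of work is the orbit identification $\mathcal{O}_U(A) = \mathcal{O}_G(A)$, which is short and mechanical because $V$ is only three-dimensional, and everything else is a direct appeal to the verge classification already proved.
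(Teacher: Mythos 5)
Your proposal is correct and follows what is essentially the paper's intended route: the corollary is left as an immediate consequence of Proposition \ref{prop: 2G2-orbit} and Proposition \ref{prop:class orbit-2G2}, namely checking that the $G$-orbit of $A$ under $A.g=\pi(Ag^{-\top})$ coincides with the $U$-orbit (so the span $\mathbb{C}\{[C]\mid C\in\mathcal{O}_U(A)\}$ is simultaneously the $U$- and the restricted $G$-orbit module, the $*$-action of $U$ being ordinary right multiplication), and then deducing the isomorphic-or-orthogonal dichotomy from the fact that each orbit contains exactly one verge pattern together with part (1) of Proposition \ref{prop:class orbit-2G2}. The only cosmetic slip is that the statement about the $*$-action restricting to right multiplication is in Theorem \ref{fund thm U-2G2} itself rather than the corollary following it; this does not affect the argument.
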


\begin{Comparison}[Classification of orbit modules]
\label{com:classification and stabilizer-G2}
Every  $^2G_2^{syl}(q)$-orbit module
has precisely one verge pattern (see \ref{prop:class orbit-2G2})
that is similar to that of $A_n(q)$ (see \cite[Theorem 3.2]{yan2}).
However, this is
neither true
for ${{^3D}_4^{syl}}(q^3)$ (see \cite[\S 6]{sun3D4super})
nor true for $G_2^{syl}(q)$ (see \cite[\S 5]{sunG2}).
\end{Comparison}


\section{Conjugacy classes of ${^2}G_2^{syl}(3^{2m+1})$}
\label{conjugacy classes-2G2}
Let $p:=3$, $q:=3^{2m+1} {\ }(m\in \mathbb{N})$,
$G:=A_8(q)$, $U:={^2}G_2^{syl}(3^{2m+1})$,
and $t^*, t_1^*,t_3^*,t_4^*\in \mathbb{F}_q^*$.
In this section,
we determine the conjugacy classes of ${^2}G_2^{syl}(3^{2m+1})$ (\ref{prop:conjugacy classes-2G2}),
and obtain one partition of ${^2}G_2^{syl}(3^{2m+1})$ (\ref{superclass:ci ti-2G2})
which is a set of the superclasses proved in Section \ref{sec: supercharacter theories-2G2}.

If $x,u\in U$, then the conjugate of $x$ by $u$ is
${^u}x:=uxu^{-1}$,
and the conjugacy class of $u$ is
${^U}x:=\left\{vxv^{-1} {\,}\middle|{\,}  v\in U \right\}$.

\begin{Lemma}\label{varsigma_t,2G2}
 Let $t\in \mathbb{F}_q^*$, $\theta:=3^m$,
 $\mathbb{F}_{q}^{+}$ be the additive group of $\mathbb{F}_{q}$ and
 \begin{align*}
 \varsigma_{t} \colon  \mathbb{F}_{q}^{+} \to  \mathbb{F}_q^+: s\mapsto ts^{3 \theta}-t^{3\theta}s.
 \end{align*}
Then $\varsigma_{t}$ is a homomorphism and $|\mathrm{im}{\,\varsigma_{t}}|=\theta^2=3^{2m}$.
\end{Lemma}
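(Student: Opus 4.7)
The plan is to verify the two assertions separately: homomorphism via Frobenius, and image size via the first isomorphism theorem applied to an explicit kernel.

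First I would check that $\varsigma_t$ is additive. Since $\mathrm{Char}\,\mathbb{F}_q = 3$ and $3\theta = 3^{m+1}$ is a power of $3$, the Frobenius identity gives $(s_1+s_2)^{3\theta} = s_1^{3\theta} + s_2^{3\theta}$, so
\begin{align*}
 \varsigma_t(s_1+s_2)
 = t(s_1+s_2)^{3\theta} - t^{3\theta}(s_1+s_2)
 = \varsigma_t(s_1) + \varsigma_t(s_2).
\end{align*}
This is the routine half.

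For the image size, by the first isomorphism theorem $|\mathrm{im}\,\varsigma_t| = |\mathbb{F}_q^+|/|\ker \varsigma_t| = 3^{2m+1}/|\ker \varsigma_t|$, so it suffices to show $|\ker\varsigma_t| = 3$. The kernel consists of those $s\in\mathbb{F}_q$ with $ts^{3\theta} = t^{3\theta}s$. The element $s=0$ lies in the kernel; for $s\neq 0$, since $t\in\mathbb{F}_q^*$, this rearranges to $(s/t)^{3\theta-1} = 1$, so the nonzero kernel elements correspond bijectively to the $(3\theta-1)$-th roots of unity in $\mathbb{F}_q^*$, which number $\gcd(3\theta-1,q-1) = \gcd(3^{m+1}-1,3^{2m+1}-1)$. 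Using the standard identity $\gcd(3^a-1,3^b-1)=3^{\gcd(a,b)}-1$ together with $\gcd(m+1,2m+1)=\gcd(m+1,(2m+1)-2(m+1))=\gcd(m+1,-1)=1$, this gcd equals $3^1-1=2$. Thus the nonzero kernel elements are precisely $s=\pm t$ (a direct check confirms both satisfy the equation, noting $(-1)^{3\theta}=-1$ because $3\theta$ is odd), giving $|\ker\varsigma_t|=3$ and hence $|\mathrm{im}\,\varsigma_t| = 3^{2m+1}/3 = 3^{2m} = \theta^2$.

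The only mild obstacle is the gcd reduction; once that standard number-theoretic identity is invoked, the rest is mechanical. No deep idea is needed beyond recognizing that $s^{3\theta-1}=t^{3\theta-1}$ forces $s/t$ to be a root of unity of a controllable order.
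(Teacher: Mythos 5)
Your proposal is correct and follows essentially the same route as the paper: additivity via the Frobenius, then identifying the kernel as $\{0,t,-t\}$ by reducing to $(s/t)^{3\theta-1}=1$ and computing $\gcd(3^{m+1}-1,3^{2m+1}-1)=2$. The only cosmetic difference is that you invoke the identity $\gcd(3^a-1,3^b-1)=3^{\gcd(a,b)}-1$ where the paper runs the Euclidean algorithm directly on the exponents, which amounts to the same computation.
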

\begin{proof}
We know that $\varsigma_{t}$ is a homomorphism.
{\it We claim that $\ker\varsigma_{t}=\{0,t,-t\}$}.
Note that $\ker\varsigma_{t}\supseteq \{0,t,-t\}$.
If $s\in \ker\varsigma_{t}$, then
$ts^{3 \theta}-t^{3\theta}s=0$.
We know $0\in \ker\varsigma_{t} \cap \{0,t,-t\}$.
If $s\neq 0$, then
 $(t^{-1}s)^{3\theta-1}=1
   \implies
 |t^{-1}s|\big| (3\theta-1)$.
Since $(t^{-1}s)^{q-1}=(t^{-1}s)^{3\theta^2-1}=1$,
the order $|t^{-1}s|$ divides the greatest common divisor
$(3\theta-1, 3\theta^2-1)=(3^{m+1}-1,3^{2m+1}-1)$.
If $m=0$, then $(3^{m+1}-1,3^{2m+1}-1)=(2,2)=2$.
If $m>0$, then $(3^{m+1}-1,3^{2m+1}-1)=2$
by the Euclidean algorithm.
So,
$|t^{-1}s|\big|2
  \implies
(t^{-1}s)^2=1
  \implies
t^{-1}s=\pm1
  \implies
s=\pm t$.
Thus, $\ker\varsigma_{t}\subseteq \{0,t,-t\}$.
The claim is proved.
Therefore,
$|\mathrm{im}{\,\varsigma_{t}}|=\frac{|\mathbb{F}_q^+|}{|\ker\varsigma_{t}|}
 =\frac{3^{2m+1}}{3}=3^{2m}$.
\end{proof}

\begin{Notation}
 For $t^*\in \mathbb{F}_q^*$,
 denote by ${^{t^*}}T$
  a transversal
 of $\mathrm{im}{\,\varsigma_{t^*}}$ in $\mathbb{F}_{q}^+$.
 Thus $|{^{t^*}}T|=3$.
\end{Notation}

\begin{Proposition}[Conjugacy classes of ${^2}G_2^{syl}(3^{2m+1})$]\label{prop:conjugacy classes-2G2}
If $U={^2}G_2^{syl}(3^{2m+1})$, then the conjugacy classes of $U$ are
listed in Table \ref{table:conjugacy classes-2G2}.
\begin{table}[!htp]
\caption{Conjugacy classes of ${^2}G_2^{syl}(3^{2m+1})$}
\label{table:conjugacy classes-2G2}
\begin{align*}
\begin{array}{|l|l|c|}\hline
\multicolumn{1}{|c|}{\text{Representative } y\in U}
& \multicolumn{1}{c|}{\text{Conjugacy Classes } {^Uy}}
&
\rule{0pt}{11pt}
|{^Uy}|
\\\hline
I_8 & Y(0,0,0) & 1\\\hline
Y(0,0,t_4^*),
{\ }t_4^*\in \mathbb{F}_q^*
& Y(0,0,t_4^*) & 1\\\hline
Y(0,t_3^*,0),
{\ }t_3^*\in \mathbb{F}_q^*
& Y(0,t_3^*,s_4),
{\ }
s_4\in \mathbb{F}_q
& q\\
\hline
Y(t_1^*,{^{t_1^*}}t_3,0),
{\ }t_1^*\in \mathbb{F}_q^*,
{\,} {^{t_1^*}}t_3\in {^{t_1^*}}T
& Y(t_1^*,\tilde{s}_3,s_4),
{\ }
\tilde{s}_3\in {^{t_1^*}}t_3+\mathrm{im}{\,\varsigma_{t_1^*}},
{\, }s_4\in \mathbb{F}_q
& q\cdot 3^{2m}\\
\hline
\end{array}
\end{align*}
\end{table}
\end{Proposition}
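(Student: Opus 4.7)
The plan is to use the conjugation formula from Proposition \ref{prop:conjugacy classes of a,b,c-2G2} together with Lemma \ref{varsigma_t,2G2} to explicitly compute the orbits of conjugation on the parameter triples $(t_1,t_3,t_4)$. The key observation is that the conjugation formula
\begin{align*}
Y(s_1,s_3,s_4)\cdot Y(t_1,t_3,t_4)\cdot Y(s_1,s_3,s_4)^{-1}
= Y\bigl(t_1,{\ }t_3+\varsigma_{t_1}(s_1),{\ }t_4+\delta(s_1,s_3)\bigr),
\end{align*}
where $\varsigma_{t_1}(s_1)=t_1 s_1^{3\theta}-t_1^{3\theta}s_1$ and $\delta(s_1,s_3)=(t_1^2 s_1^{3\theta}+t_1^{3\theta}s_1^2)+t_1^{3\theta+1}s_1+(t_3 s_1-t_1 s_3)$, shows that the first coordinate $t_1$ is a complete invariant of any conjugacy class, so I would partition the proof according to whether $t_1=0$ or $t_1\neq 0$.

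For $t_1=0$ the formula collapses to $Y(0,t_3,t_4)\mapsto Y(0,t_3,t_4+t_3 s_1)$, so $c(t_4)=Y(0,0,t_4)$ is central (giving the size-$1$ classes listed in rows one and two of the table), while if $t_3=t_3^*\neq 0$ the map $s_1\mapsto t_3^* s_1$ is a bijection on $\mathbb{F}_q$, which sweeps $t_4$ through all of $\mathbb{F}_q$ and yields the class $\{Y(0,t_3^*,s_4)\mid s_4\in\mathbb{F}_q\}$ of size $q$. For $t_1=t_1^*\neq 0$ I would first choose $s_1=0$ and let $s_3$ vary: the change $-t_1^* s_3$ is surjective onto $\mathbb{F}_q$, so $t_4$ can be adjusted arbitrarily within any fixed $t_3$. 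Next, letting $s_1$ vary, the $t_3$-coordinate shifts by $\varsigma_{t_1^*}(s_1)$, whose image is exactly $\mathrm{im}\,\varsigma_{t_1^*}$ by Lemma \ref{varsigma_t,2G2}. Putting these together, the orbit of $Y(t_1^*,t_3,t_4)$ is precisely $\{Y(t_1^*,\tilde s_3,s_4)\mid \tilde s_3\in t_3+\mathrm{im}\,\varsigma_{t_1^*},\,s_4\in\mathbb{F}_q\}$, of cardinality $|{\mathrm{im}\,\varsigma_{t_1^*}}|\cdot q=3^{2m}\cdot q$. The three cosets of $\mathrm{im}\,\varsigma_{t_1^*}$ in $\mathbb{F}_q^+$ are labelled by ${^{t_1^*}}T$, accounting for the third entry of the last row.

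Finally, I would verify the enumeration is complete by summing class sizes:
\begin{align*}
1+(q-1)\cdot 1+(q-1)\cdot q+(q-1)\cdot 3\cdot (q\cdot 3^{2m})
=q^2+3^{4m+2}(q-1)=q\cdot 3^{4m+2}=q^3=|U|,
\end{align*}
which confirms that every element of $U$ is accounted for exactly once. The remaining technical point is the well-definedness of the representatives in the last row: different choices of ${^{t_1^*}}t_3\in{^{t_1^*}}T$ give non-conjugate elements because shifting $t_3$ by anything outside $\mathrm{im}\,\varsigma_{t_1^*}$ is impossible, and this follows directly from the conjugation formula together with the identification of the image of $\varsigma_{t_1^*}$.

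The main obstacle is arguing that the $t_4$-shift $\delta(s_1,s_3)$ really is surjective onto $\mathbb{F}_q$ for \emph{each} fixed choice of $s_1$ when $t_1=t_1^*\neq 0$; this is where the assumption $t_1^*\neq 0$ is used critically, since only then does the term $-t_1^* s_3$ allow $s_3$ to absorb the remainder of $\delta(s_1,s_3)$. Once this is observed, the decomposition of each fiber $\{t_1=t_1^*\}$ into exactly three classes of size $q\cdot 3^{2m}$ (indexed by ${^{t_1^*}}T$) follows immediately, and Higman's conjecture for $^2G_2^{syl}(3^{2m+1})$ is an automatic corollary of the polynomial expression for the total class count.
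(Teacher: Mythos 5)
Your proposal is correct and follows essentially the same route as the paper, which proves the proposition simply by combining the conjugation formula of Proposition \ref{prop:conjugacy classes of a,b,c-2G2} with Lemma \ref{varsigma_t,2G2}; you merely make explicit the orbit analysis (splitting by $t_1=0$ versus $t_1=t_1^*\neq 0$, using $s_3$ to sweep $t_4$ and $\varsigma_{t_1^*}$ to sweep $t_3$ within a coset) that the paper leaves to the reader, and you add a class-size count confirming $\sum|{^Uy}|=q^3=|U|$.
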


\begin{proof}
By \ref{prop:conjugacy classes of a,b,c-2G2}
and \ref{varsigma_t,2G2},
we get the conjugacy classes of $U$.
\end{proof}


\begin{Remark}
We consider the analogue of Higman's conjecture
for ${^2}G_2^{syl}(3^{2m+1})$.
By \ref{prop:conjugacy classes-2G2}, we get
$ \# \{\text{Conjugacy Classes of } {^2}G_2^{syl}(3^{2m+1})\}
 = 5q-4
 = 5(q-1)+1$.
Thus the conjecture is true for ${^2}G_2^{syl}(3^{2m+1})$.
\end{Remark}

\begin{Notation/Lemma}\label{superclass:ci ti-2G2}
Set
 \begin{align*}
  C_4(t_4^*):= {^U}Y(0,0,t_4^*),\quad
  C_3(t_3^*):= {^U}Y(0,t_3^*,0),\quad
  C_1(t_1^*):= \bigcup_{{^{t_1^*}}t_3\in {^{t_1^*}}T}
             {{^U}Y(t_1^*,{^{t_1^*}}t_3,0)}, \quad
  C_0: = \{ I_8 \}.
 \end{align*}
Note that the sets form a partition of $U$, denoted by $\mathcal{K}$, i.e.
\begin{align*}
\mathcal{K}:=\{C_0, C_1(t_1^*),C_3(t_3^*),C_4(t_4^*)  \mid  t_1^*,t_3^*,t_4^*\in\mathbb{F}_q^*\}.
\end{align*}
\end{Notation/Lemma}

\begin{Comparison}
\label{com:conjugacy classes-2G2}
\begin{itemize}
\setlength\itemsep{0em}
\item [(1)] (Superclasses).
The superclasses of ${^2G}_2^{syl}(q)$ in \ref{superclass:ci ti-2G2}
can also be obtained
 by calculating
 $\{I_8+x(u-I_8)y \mid x, y\in G\}\cap U
 =\{I_{8}+(u-I_{8})y\mid y \in A_8(q)\}\cap {^2G}_2^{syl}(q)$
 for all $u\in {^2G}_2^{syl}(q)$
 (c.f. \cite[\S 7]{sun3D4super} and \cite[\S 6]{sunG2}).
\item [(2)] (Conjugacy classes).
The conjugacy classes of $^2G_2^{syl}(q)$
are determined by commutator relations
that is similar to that of ${{^3D}_4^{syl}}(q^3)$ (see \cite[\S 3]{sun1})
and that of $G_2^{syl}(q)$ (see \cite[\S 8]{sunG2}).
\end{itemize}
\end{Comparison}


\section{A supercharacter theory for ${^2}G_2^{syl}(3^{2m+1})$}
\label{sec: supercharacter theories-2G2}
In this section,
let $p:=3$, $q:=3^{2m+1} {\ }(m\in \mathbb{N})$,
$U:={^2}G_2^{syl}(3^{2m+1})$,
and $A_{12}^*,A_{13}^*,A_{14}^*\in \mathbb{F}_q^*$.
We determine the supercharacter theory for ${^2}G_2^{syl}(3^{2m+1})$ (\ref{supercharacter theory-2G2}),
and establish the supercharacter table of ${^2}G_2^{syl}(3^{2m+1})$ in
Table \ref{table:supercharacter table-2G2}.

\begin{Definition}[{\cite[\S 2]{di}}/{\cite[3.6.2]{Markus1}}]
\label{supercharacter theory}
Let $G$ be a finite group.
Suppose that $\mathcal{K}$ is a partition of $G$
and that $\mathcal{X}$ is a set of (nonzero) complex characters of $G$,
such that
\begin{itemize}
 \setlength\itemsep{0em}
 \item [(a)] $|\mathcal{X}|=|\mathcal{K}|$,
 \item [(b)] every character $\chi \in \mathcal{X}$ is constant on each member of $\mathcal{K}$,
 \item [(c)] the elements of $\mathcal{X}$ are pairwise orthogonal and
  \item[(d)] the set $\{1\}$ is a member of $\mathcal{K}$.
\end{itemize}
Then $(\mathcal{X},\mathcal{K})$
is called a \textbf{supercharacter theory} for $G$.
We refer to the elements of $\mathcal{X}$ as \textbf{supercharacters},
and to the elements of $\mathcal{K}$ as \textbf{superclasses}
of $G$.
\end{Definition}

\begin{Notation}\label{notation:supermodules-2G2}
For $A=(A_{ij})\in V$,
set
$M{(0)}:= \{0\}$,
$M{(A_{12}^*e_{12})}:= \mathbb{C}\mathcal{O}_U([A_{12}^*e_{12}])$,
$M{(A_{13}^*e_{13})}:= \mathbb{C}\mathcal{O}_U([A_{13}^*e_{13}])$,
and
$M{({A_{14}^*}e_{14})}:= \mathbb{C}\mathcal{O}_U([A_{14}^*e_{14}])$.
Denote by $\mathcal{M}$
the set of all of the above $\mathbb{C}U$-modules, i.e.
\begin{align*}
\mathcal{M}:=&\{M(0),  M(A_{12}^*e_{12}),
         M(A_{13}^*e_{13}),
         M(A_{14}^*e_{14})
 \mid
A_{12}^*,A_{13}^*,A_{14}^*\in \mathbb{F}_q^*
\}\\
=&
\{\{0\}, \mathbb{C}\mathcal{O}_U([A_{12}^*e_{12}]),
         \mathbb{C}\mathcal{O}_U([A_{13}^*e_{13}]),
         \mathbb{C}\mathcal{O}_U([A_{14}^*e_{14}])
 \mid
A_{12}^*,A_{13}^*,A_{14}^*\in \mathbb{F}_q^*
\}.
\end{align*}
Note that $\mathcal{M}$ is a set of $U$-orbit modules.
\end{Notation}

\begin{Notation}\label{set of supercharacters-2G2}
For $M\in \mathcal{M}$, the complex character
of the $\mathbb{C}U$-module $M$ is denoted by $\Psi_M$.
We set
$\mathcal{X}:=\{\Psi_M \mid M\in \mathcal{M}\}$.
 Let $A\in V$, and $\psi_A$ be  the character of $\mathbb{C}\mathcal{O}_U([A])$.
 Then
$\Psi_{M(0)}= \psi_0$,
 $\Psi_{M(A_{12}^*e_{12})}= \psi_{A_{12}^*e_{12}}$,
 $\Psi_{M(A_{13}^*e_{13})}= \psi_{A_{13}^*e_{13}}$,
and
 $\Psi_{M(A_{14}^*e_{14})}= \psi_{A_{14}^*e_{14}}$.
\end{Notation}

\begin{Proposition}[Supercharacter theory for ${^2}G_2^{syl}(3^{2m+1})$]\label{supercharacter theory-2G2}
Let
$\mathcal{X}=\{\Psi_M \mid M\in \mathcal{M}\}$ defined in \ref{set of supercharacters-2G2}
 and
 $\mathcal{K}=\{C_0, C_1(t_1^*),C_3(t_3^*),C_4(t_4^*)  \mid  t_1^*,t_3^*,t_4^*\in\mathbb{F}_q^*\}$
defined in \ref{superclass:ci ti-2G2}.
Then $(\mathcal{X},\mathcal{K})$ is a supercharacter theory for ${^2}G_2^{syl}(3^{2m+1})$.
The supercharacter table is shown in Table \ref{table:supercharacter table-2G2}.
\begin{table}[!htp]
\caption{Supercharacter table of ${^2}G_2^{syl}(3^{2m+1})$}
\label{table:supercharacter table-2G2}
\index{supercharacter table!-of ${^2}G_2^{syl}(3^{2m+1})$}%
\begin{align*}
\begin{array}{l|cccc}
×
& C_0
& C_1(t_1^*)
& C_3(t_3^*)
& C_4(t_4^*)
\\
\hline
\Psi_{M{(0)}}
& 1 & 1 & 1 & 1 \\
\Psi_{M{(A_{12}^*e_{12})}}
& 1
& \vartheta (A_{12}^*t_1^*)
& 1
& 1 \\
\Psi_{M{(A_{13}^*e_{13})}}
& q
& 0
& q\cdot \vartheta (-A_{13}^*t_3^*)
& q \\
\Psi_{M{(A_{14}^*e_{14})}}
& q^2
& 0
& 0
& q^2\cdot \vartheta (-A_{14}^*t_4^*)
\end{array}
\end{align*}
\end{table}
\end{Proposition}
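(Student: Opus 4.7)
The plan is to verify the four axioms (a)--(d) of Definition \ref{supercharacter theory} and simultaneously derive every entry of Table \ref{table:supercharacter table-2G2}. Axiom (d) is immediate since $\{I_8\}=C_0\in\mathcal{K}$. For (a), both sides are indexed by $\{0\}$ together with three copies of $\mathbb{F}_q^*$, so $|\mathcal{X}|=|\mathcal{K}|=1+3(q-1)$.

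The supercharacter values come from the trace formula for the monomial action of Theorem \ref{fund thm U-2G2}: since $M=\mathbb{C}\mathcal{O}_U([A])$ has $\mathbb{C}$-basis $\{[B]\mid B\in\mathcal{O}_U(A)\}$ with action $[B]*u=\chi_{B.u}(u)[B.u]$, the diagonal entries yield
\begin{align*}
\Psi_M(u) \;=\; \sum_{\substack{B\in\mathcal{O}_U(A)\\ B.u=B}} \chi_B(u),\qquad \chi_B(u)=\vartheta\bigl(\kappa(B,f(u))\bigr).
\end{align*}
Proposition \ref{prop: 2G2-orbit} describes each orbit $\mathcal{O}_U(A)$ explicitly, and the first row of the matrix in Proposition \ref{sylow p subg-2G2} gives $f(Y(t_1,t_3,t_4))=t_1e_{12}-t_3e_{13}+(t_1t_3-t_4)e_{14}$. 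Working case by case: for $M(0)$ the orbit is $\{0\}$ and the value is $1$; for $M(A_{12}^*e_{12})\in\mathfrak{F}_1$ the orbit is the singleton $\{A_{12}^*e_{12}\}$, giving $\vartheta(A_{12}^*t_1)$; for $\mathfrak{F}_3$ the fixed-point condition $B.u=B$ forces $t_1=0$ (and then all $q$ elements of the orbit are fixed); for $\mathfrak{F}_4$ it forces $t_1=t_3=0$ (and then all $q^2$ elements of the orbit are fixed). In the $\mathfrak{F}_3$ and $\mathfrak{F}_4$ cases the pairing $\kappa(B,f(u))$ depends only on the verge component $A_{13}^*$ or $A_{14}^*$, so the sum collapses to $|\mathcal{O}_U(A)|\cdot\vartheta(\cdot)$, reproducing exactly the table entries. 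Since each of these values depends only on the parameter $(t_1^*,t_3^*,t_4^*)$ labelling the superclass of $u$ from \ref{superclass:ci ti-2G2}, axiom (b) follows.

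For axiom (c), I split orthogonality into inter-family and intra-family. Inter-family orthogonality is Proposition \ref{prop:class orbit-2G2}(1), and within $\mathfrak{F}_1$ the orbit modules are $1$-dimensional, irreducible and pairwise non-isomorphic by \ref{prop:class orbit-2G2}(2). Intra-family orthogonality in $\mathfrak{F}_3$ and $\mathfrak{F}_4$, as well as orthogonality of every $\Psi_M\neq\Psi_{M(0)}$ against $\Psi_{M(0)}$, reduces by Table \ref{table:supercharacter table-2G2} to Fourier identities of the shape $\sum_{t\in\mathbb{F}_q}\vartheta(ct)=0$ for $c\in\mathbb{F}_q^*$, which hold by the nontriviality of $\vartheta$ (\ref{vartheta}).

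The main obstacle is the bookkeeping in the character computation: one must verify that in the $\mathfrak{F}_3$ and $\mathfrak{F}_4$ cases (i) the condition $B.u=B$ cuts out the \emph{entire} orbit at once rather than a proper subset, and (ii) the scalar $\chi_B(u)$ takes a common value independent of $B$. These two coincidences are precisely what make the trace collapse to the clean closed form in the table, and both follow from the explicit shape of $f(u)$ in Proposition \ref{sylow p subg-2G2} combined with the orbit description in Proposition \ref{prop: 2G2-orbit}.
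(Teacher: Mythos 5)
Your proposal is correct and follows essentially the same route as the paper: verify axioms (a)--(d) of Definition \ref{supercharacter theory}, compute each $\Psi_M(u)=\sum_{B\in\mathcal{O}_U(A),\,B.u=B}\chi_B(u)$ using \ref{prop: 2G2-orbit} and \ref{prop: 2G2-stab} to get the table and hence constancy on superclasses, and invoke the orthogonality of orbit modules. The only cosmetic difference is that the paper cites \ref{prop:class orbit-2G2} for all of axiom (c), while you re-derive the intra-family cases by Fourier identities from the completed table; both are valid.
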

\begin{proof}
 By \ref{superclass:ci ti-2G2}, $\mathcal{K}$ is a partition of $U$.
 We know that $\mathcal{X}$ is a set of nonzero complex characters of $U$.
\begin{itemize}
\setlength\itemsep{0em}
 \item [(a)] {\it Claim that $|\mathcal{X}|=|\mathcal{K}|$}.
 By \ref{superclass:ci ti-2G2}, \ref{notation:supermodules-2G2} and \ref{set of supercharacters-2G2},
we have
$|\{\Psi_{M{(A_{13}^*e_{13})}} \mid  A_{13}^*\in \mathbb{F}_q^*\}|
=|\{{M{(A_{13}^*e_{13})}} \mid  A_{13}^*\in \mathbb{F}_q^*\}|
=|\{C_3(t_3^*) \mid t_3^* \in \mathbb{F}_q^* \}|$.
Thus $|\mathcal{X}|=|\mathcal{K}|=3(q-1)+1$.
 \item [(b)] {\it Claim that the characters $\chi \in \mathcal{X}$ are constant
                  on the members of $\mathcal{K}$}.

Let $A \in \mathfrak{F}_3$
(i.e. $A_{14}=0$, $A_{13}=A_{13}^*\in \mathbb{F}_q^*$)
and $y\in U$.
Then
 \begin{align*}
 \Psi_{M(A_{13}^*e_{13})}(y)=
     \sum_{\substack{C\in \mathcal{O}_{U}(A_{13}^*e_{13})\\C.y=C}}{\chi_C(y)}
 =\sum_{\substack{C\in \mathcal{O}_{U}(A_{13}^*e_{13})\\ y\in \mathrm{Stab}_U(C)}}{\chi_C(y)}.
 \end{align*}
 If $y=Y(0,t_3,t_4)\in C_0\cup C_3(t_3^*)\cup C_4(t_4^*)\subseteq \mathcal{K}$,
 then we have $y\in \mathrm{Stab}_U(C)$ for all $C\in \mathcal{O}_{U}(A_{13}^*e_{13})$ by \ref{prop: 2G2-stab}.
 Thus
\begin{align*}
\Psi_{M({A_{13}^*}e_{13})}(y)=
               \sum_{C\in \mathcal{O}_{U}(A_{13}^*e_{13})}{\chi_C(y)}
            =  q\cdot
               {\vartheta(-{A_{13}^*}t_3)}.
\end{align*}
If $y\in C_1(t_1^*)\subseteq \mathcal{K}$,
then $y \notin \mathrm{Stab}_U(C)$ for all $C\in \mathcal{O}_{U}(A_{13}^*e_{13})$ by \ref{prop: 2G2-stab}.
So
$\Psi_{M({A_{13}^*}e_{13})}(y)=0$.
Similarly, we calculate the other values of the Table \ref{table:supercharacter table-2G2}.
Thus, the claim is proved.
 \item [(c)] The elements of $\mathcal{X}$ are pairwise orthogonal by \ref{prop:class orbit-2G2}.
 \item [(d)] The set $\{I_8\}$ is a member of $\mathcal{K}$.
\end{itemize}
By \ref{supercharacter theory},
 $(\mathcal{X},\mathcal{K})$ is a supercharacter theory for ${^2}G_2^{syl}(3^{2m+1})$.
\end{proof}

\begin{Comparison}[Supercharacters]
\label{com:supercharacter theory-2G2}
Every supercharacter of $G_2^{syl}(q)$
is afforded by one orbit module
(see \ref{notation:supermodules-2G2},
\ref{set of supercharacters-2G2}
and
\ref{supercharacter theory-2G2})
that is analogous to that of
$A_n(q)$ (see \cite{yan2}).
However,
this holds neither for ${{^3D}_4^{syl}}(q^3)$ (see \cite[\S 8]{sun3D4super})
nor for $G_2^{syl}(q)$ (see \cite[\S 7]{sunG2}).
\end{Comparison}


\section{Character table of ${^2}G_2^{syl}(3)$}
\label{sec: character table-2G2-3}
In this section,
we exhibit
the conjugacy classes of ${^2}G_2^{syl}(3)$(see Table \ref{table:conjugacy classes-2G2(3)}),
the irreducible characters of ${^2}G_2^{syl}(3)$ (see Proposition \ref{construction of irr. char. of 2G2})
and the character table of ${^2}G_2^{syl}(3)$
(see Table \ref{table:character table-2G2(3)}).
Let $q=p=3$ (i.e. $m=0$) and $U:= {^2}G_2^{syl}(3)$.
Then $\theta=3^m=1$.

\begin{Notation}
Set
$Y_{a}  := \{a(t_1) \mid  t_1\in \mathbb{F}_q\}$,
$Y_{b}  := \{b(t_3) \mid t_3\in \mathbb{F}_q\}$,
 and
$Y_{c}  := \{c(t_4) \mid t_4\in \mathbb{F}_q\}$.
\end{Notation}

We recall the properties
for ${^2}G_2^{syl}(3)$.
\begin{Lemma}[Multiplication]
If $i\in \{1,3,4\}$ and $t_i, s_i\in \mathbb{F}_3$, then
\begin{align*}
 Y(t_1,t_3,t_4)\cdot Y(s_1,s_3,s_4)
=& Y(t_1+s_1,{\ }t_3+s_3-t_1s_1,{\ }t_4+s_4+t_1s_1^2-t_1^2s_1-t_3s_1),\\
 Y(t_1,t_3,t_4)^{-1}
=& Y(-t_1,{\ }-t_3-t_1^2,{\ } -t_4+t_1-t_1t_3).
\end{align*}
In particular,
\begin{alignat*}{2}
 a(t_1)\cdot a(s_1)=& Y(t_1+s_1,-t_1s_1,t_1s_1^2-t_1^2s_1),
 & \qquad
 & a(t_1)^{-1}= Y(-t_1, -t_1^2, t_1),\\
 b(t_3)\cdot b(s_3)=& b(t_3+s_3),
 & \qquad
 & c(t_4)\cdot c(s_4)= c(t_4+s_4).
\end{alignat*}
\end{Lemma}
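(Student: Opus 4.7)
The plan is to deduce this lemma as a direct specialization of the corresponding multiplication and inversion formulas established earlier for the general Sylow $p$-subgroup ${^2}G_2^{syl}(3^{2m+1})$ in Section \ref{sec:2G2-1}. That general lemma asserts
\begin{align*}
 Y(t_1,t_3,t_4)\cdot Y(s_1,s_3,s_4)
=& Y(t_1+s_1,{\ }t_3+s_3-t_1s_1^{3\theta},{\ }t_4+s_4+t_1s_1^{3\theta+1}-t_1^2s_1^{3\theta}-t_3s_1),\\
 Y(t_1,t_3,t_4)^{-1}
=& Y(-t_1,{\ }-t_3-t_1^{3\theta +1},{\ } -t_4+t_1^{3\theta+2}-t_1t_3),
\end{align*}
together with the analogous identities for $a(t_1)a(s_1)$, $a(t_1)^{-1}$, $b(t_3)b(s_3)$ and $c(t_4)c(s_4)$, valid for all $q=3^{2m+1}$ and all $t_i,s_i\in\mathbb{F}_q$.

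The first step is to set $m=0$, so that $q=3$ and $\theta=3^m=1$. In this case $3\theta=3$. The second step is to use the fact that every $t\in\mathbb{F}_3$ satisfies $t^3=t$ (Fermat's little theorem for the field $\mathbb{F}_3$), which yields the exponent collapses
\begin{align*}
 t^{3\theta}=t^3=t,\qquad
 t^{3\theta+1}=t^3\cdot t=t^2,\qquad
 t^{3\theta+2}=t^3\cdot t^2=t\cdot t^2=t.
\end{align*}
The third step is to substitute these simplifications into the general formulas above, which immediately produces the stated identities for multiplication and inversion of $Y(t_1,t_3,t_4)$ in $U={^2}G_2^{syl}(3)$.

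The special cases for $a(t_1)\cdot a(s_1)$, $a(t_1)^{-1}$, $b(t_3)\cdot b(s_3)$ and $c(t_4)\cdot c(s_4)$ then follow by setting the appropriate coordinates to zero in the general formulas (noting that $b(t_3)=Y(0,t_3,0)$ and $c(t_4)=Y(0,0,t_4)$, while $a(t_1)=Y(t_1,0,0)$) and again applying $s_1^{3\theta}=s_1$ and $s_1^{3\theta+1}=s_1^2$. There is no genuine obstacle here: the lemma is purely the evaluation of the earlier symbolic formulas at $\theta=1$ combined with the $\mathbb{F}_3$-identity $t^3=t$. No commutator calculation or matrix manipulation needs to be redone.
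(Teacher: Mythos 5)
Your proposal is correct and matches the paper's (implicit) reasoning: the paper states this lemma simply as a recollection of the general multiplication and inversion formulas for ${^2}G_2^{syl}(3^{2m+1})$ from Section \ref{sec:2G2-1}, specialized to $m=0$, $\theta=1$, where $t^{3\theta}=t$, $t^{3\theta+1}=t^2$ and $t^{3\theta+2}=t$ for all $t\in\mathbb{F}_3$. Your exponent collapses and the identifications $a(t_1)=Y(t_1,0,0)$, $b(t_3)=Y(0,t_3,0)$, $c(t_4)=Y(0,0,t_4)$ are exactly what is needed, so no further calculation is required.
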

\begin{Corollary}
$Y_{b}$ and $Y_{c}$ are subgroups of ${^2{G}}_2^{syl}(3)$,
but $Y_{a}$ is not a subgroup of  ${^2{G}}_2^{syl}(3)$.
\end{Corollary}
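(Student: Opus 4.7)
The plan is to read off everything from the multiplication formulas in the preceding lemma, using the fact that $U={}^{2}G_{2}^{syl}(3)$ is finite so that closure under multiplication already gives a subgroup.

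First I would handle $Y_{b}$. The formula $b(t_{3})\cdot b(s_{3})=b(t_{3}+s_{3})$ shows that $Y_{b}$ is closed under the group operation, and since $Y_{b}\subseteq U$ with $U$ finite, this suffices for $Y_{b}\leqslant U$. (For the record, one also gets $b(t_{3})^{-1}=b(-t_{3})\in Y_{b}$ directly from the inverse formula by setting $t_{1}=t_{4}=0$.) The treatment of $Y_{c}$ is identical, using $c(t_{4})\cdot c(s_{4})=c(t_{4}+s_{4})$.

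For $Y_{a}$ the plan is to exhibit an explicit counterexample to closure. The formula
\begin{align*}
a(t_{1})\cdot a(s_{1})=Y(t_{1}+s_{1},\,-t_{1}s_{1},\,t_{1}s_{1}^{2}-t_{1}^{2}s_{1})
\end{align*}
shows that the product lies in $Y_{a}=\{Y(t,0,0)\mid t\in\mathbb{F}_{3}\}$ only when $-t_{1}s_{1}=0$, i.e.\ only when one of the factors is the identity. Taking $t_{1}=s_{1}=1\in\mathbb{F}_{3}^{*}$ gives $a(1)\cdot a(1)=Y(2,-1,0)=Y(2,2,0)\notin Y_{a}$, so $Y_{a}$ is not closed under multiplication and therefore not a subgroup.

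There is no serious obstacle; the whole argument is a direct application of the multiplication and inverse formulas just established. The only mild subtlety is remembering that $Y(t_{1},t_{3},t_{4})=a(t_{1})b(t_{3})c(t_{4})$ is a (unique) normal-form decomposition, so that membership in $Y_{a}$ is equivalent to $t_{3}=t_{4}=0$; this is what justifies reading off non-closure from the second coordinate of $a(1)\cdot a(1)$.
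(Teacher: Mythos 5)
Your proposal is correct and follows the same route the paper intends: the corollary is read off directly from the multiplication formulas, with $b(t_3)b(s_3)=b(t_3+s_3)$ and $c(t_4)c(s_4)=c(t_4+s_4)$ giving closure for $Y_b$ and $Y_c$, and $a(1)\cdot a(1)=Y(2,2,0)\notin Y_a$ (justified by the injectivity of the parametrization $(t_1,t_3,t_4)\mapsto Y(t_1,t_3,t_4)$, which holds since $|U|=q^3$) showing $Y_a$ is not closed. No gaps.
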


\begin{Lemma}[Commutator relations of ${^2}G_2^{syl}(3)$]
Let $i\in \{1,3,4\}$ and $t_i, s_i\in \mathbb{F}_3$.
Then the commutators of $^2{G}_2^{syl}(3)$ are
\begin{align*}
  [Y(t_1,t_3,t_4), Y(s_1,s_3,s_4)]
=&  Y\big(0,{\ }0,{\ }(t_1^2s_1-t_1s_1^2)+(t_1s_3-t_3s_1)\big),\\
  [Y(t_1,t_3,t_4)^{-1}, Y(s_1,s_3,s_4)^{-1}]
=& Y\big(0,{\ }0,{\ }
       (t_1^2s_1-t_1s_1^2)+(t_1s_3-t_3s_1)\big).
\end{align*}
{In particular, }
\begin{alignat*}{2}
& [a(t_1), a(s_1)]= c(t_1^2s_1-t_1s_1^2),
& \qquad
& [a(t_1)^{-1}, a(s_1)^{-1}]=  c(t_1^2s_1-t_1s_1^2),\\
& [a(t_1), b(s_3)]= c(t_1s_3),
& \qquad
& [a(t_1)^{-1}, b(s_3)^{-1}]= c(t_1s_3).
\end{alignat*}
\end{Lemma}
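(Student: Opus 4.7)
The plan is to specialize the general commutator formula for $^2G_2^{syl}(3^{2m+1})$ already proved earlier in Section \ref{sec:2G2-1} to the case $m=0$, i.e.\ $\theta=3^m=1$ and $q=3$. Since we work in $\mathbb{F}_3$, Fermat's little theorem gives $x^3=x$ for all $x\in\mathbb{F}_3$, so $x^{3\theta}=x^3=x$ and $x^{3\theta+1}=x^4=x^2$. These two identities are the only substitutions needed.

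Concretely, I would start from the general formula
\begin{align*}
[Y(t_1,t_3,t_4),Y(s_1,s_3,s_4)]
= Y\big(0,\ t_1^{3\theta}s_1-t_1s_1^{3\theta},\ & (t_1s_1^{3\theta+1}-t_1^{3\theta+1}s_1)\\
  & +(t_1^{3\theta}s_1^2-t_1^2s_1^{3\theta})+(t_1s_3-t_3s_1)\big)
\end{align*}
and substitute $\theta=1$ together with the $\mathbb{F}_3$-identities above. The second coordinate becomes $t_1s_1-t_1s_1=0$; the third coordinate becomes
\[
(t_1s_1^2-t_1^2s_1)+(t_1s_1^2-t_1^2s_1)+(t_1s_3-t_3s_1)
 = 2(t_1s_1^2-t_1^2s_1)+(t_1s_3-t_3s_1).
\]
Since $2=-1$ in $\mathbb{F}_3$, this equals $(t_1^2s_1-t_1s_1^2)+(t_1s_3-t_3s_1)$, which is precisely the claimed expression. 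For the inverse commutator, the same substitutions reduce the general third coordinate $(t_1^2s_1^{3\theta}-t_1^{3\theta}s_1^2)+(t_1s_3-t_3s_1)$ directly to $(t_1^2s_1-t_1s_1^2)+(t_1s_3-t_3s_1)$, and the second coordinate again vanishes.

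The ``In particular'' line then follows by specialising the variables: setting $t_3=t_4=s_3=s_4=0$ yields the formulas for $[a(t_1),a(s_1)]$ and $[a(t_1)^{-1},a(s_1)^{-1}]$, while setting $t_3=t_4=0$ and $s_1=s_4=0$ yields those for $[a(t_1),b(s_3)]$ and $[a(t_1)^{-1},b(s_3)^{-1}]$. There is essentially no obstacle here; the only subtle point is consistently applying $x^3=x$ in $\mathbb{F}_3$ to collapse the $\theta$-dependent exponents, and checking that the two copies of $(t_1s_1^2-t_1^2s_1)$ arising in the first commutator combine under the characteristic $3$ to the displayed sign.
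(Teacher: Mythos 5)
Your proposal is correct and matches the paper's (implicit) argument: the paper simply records this lemma as the specialization of the general commutator relations of $^2G_2^{syl}(q)$ from Section 3 to $q=3$, $\theta=1$, which is exactly what you carry out, with the identities $x^{3\theta}=x^3=x$ and $x^{3\theta+1}=x^2$ in $\mathbb{F}_3$ and the sign collapse $2=-1$ handled correctly.
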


\begin{Corollary}
$Y_{c}$ and $Y_{b}Y_{c}$ are normal subgroups of ${^2{G}}_2^{syl}(3)$.
$Z(U)=Y_c$
and ${Y_c}\backslash U$ is abelian.
\end{Corollary}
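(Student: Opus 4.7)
The plan is to exploit the fact that for $q=3$ (so $\theta=1$) every element $t\in\mathbb{F}_3$ satisfies $t^3=t$, which collapses the general commutator and conjugation formulas of Proposition~\ref{prop:conjugacy classes of a,b,c-2G2} dramatically. I treat the four assertions in the order $Y_c\leq Z(U)$, $Z(U)\leq Y_c$, $U/Y_c$ abelian, $Y_bY_c\trianglelefteq U$, since each step feeds the next.

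First I would show $Y_c\subseteq Z(U)$ (which simultaneously gives $Y_c\trianglelefteq U$). Specializing the conjugation formula of Proposition~\ref{prop:conjugacy classes of a,b,c-2G2} to $t_1=t_3=0$ yields
\begin{align*}
Y(s_1,s_3,s_4)\cdot c(t_4)\cdot Y(s_1,s_3,s_4)^{-1}=c(t_4)
\end{align*}
for all $s_1,s_3,s_4\in\mathbb{F}_3$, so every $c(t_4)$ is central. Hence $Y_c\leq Z(U)$.

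Next I would establish $Z(U)\subseteq Y_c$. The cleanest route is to read this off the conjugacy class table (Proposition~\ref{prop:conjugacy classes-2G2}): the only singleton classes are those of $Y(0,0,t_4)$ for $t_4\in\mathbb{F}_q$, so $Z(U)=\{Y(0,0,t_4)\mid t_4\in\mathbb{F}_3\}=Y_c$. (A direct alternative is to impose that the conjugate of $Y(t_1,t_3,t_4)$ by $Y(s_1,s_3,s_4)$ equals $Y(t_1,t_3,t_4)$ for all $s_1,s_3,s_4$; comparing the third coordinate with $s_3\neq 0$, $s_1=0$ forces $t_1=0$, and then $s_1\neq 0$ forces $t_3=0$.)

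For the quotient, I would use the commutator formula of Proposition~\ref{prop:conjugacy classes of a,b,c-2G2} with $\theta=1$ and $t^3=t$ in $\mathbb{F}_3$: the second entry $t_1^{3\theta}s_1-t_1s_1^{3\theta}$ reduces to $t_1s_1-t_1s_1=0$, so $[Y(t_1,t_3,t_4),Y(s_1,s_3,s_4)]\in Y_c$. Therefore $[U,U]\leq Y_c$ and $U/Y_c$ is abelian.

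Finally, $Y_bY_c$ is a subgroup since $Y_c$ is central (in particular normal) and $Y_b$ is a subgroup, and it contains $Y_c$; because $U/Y_c$ is abelian, \emph{every} subgroup of $U$ containing $Y_c$ is normal, so $Y_bY_c\trianglelefteq U$. Alternatively, one can check normality directly from
\begin{align*}
Y(s_1,s_3,s_4)\cdot b(t_3)\cdot Y(s_1,s_3,s_4)^{-1}=Y(0,t_3,t_3s_1)=b(t_3)\,c(t_3s_1)\in Y_bY_c,
\end{align*}
combined with the centrality of $Y_c$. The whole argument is routine; the only thing to be mindful of is not to lose track of the simplifications $t^3=t$ and $t^{3\theta+1}=t^2$ that are special to $\mathbb{F}_3$ and that are exactly what makes the second coordinate of every commutator vanish.
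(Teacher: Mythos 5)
Your proof is correct and follows essentially the same route as the paper: the corollary is meant to be read off from the $q=3$ specialization of the commutator and conjugation formulas (the paper's preceding commutator-relations lemma), which is exactly what you do — centrality of $Y_c$ from the conjugation formula, $[U,U]\subseteq Y_c$ from the vanishing second coordinate, and normality of $Y_bY_c$ as a subgroup containing the commutator subgroup. Reading $Z(U)\subseteq Y_c$ off the conjugacy-class table (or your direct coordinate check) is only a cosmetic variation on the same computation.
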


\begin{Proposition}[Conjugacy classes of ${^2}G_2^{syl}(3)$]
\label{prop:conjugacy classes-2G2(3)}
If $t_i, s_i\in \mathbb{F}_q$ with $i\in \{1,3,4\}$,
then the conjugate of $Y(t_1,t_3,t_4)$ is
 \begin{align*}
 Y(s_1,s_3,s_4)\cdot Y(t_1,t_3,t_4)\cdot Y(s_1,s_3,s_4)^{-1}
=& Y\big(t_1,{\ }t_3,{\ }
t_4+(t_1s_1^2-t_1^2s_1)+(t_3s_1-t_1s_3)\big).
\end{align*}
In particular,
\begin{align*}
& Y(s_1,s_3,s_4)\cdot a(t_1)\cdot Y(s_1,s_3,s_4)^{-1}
= Y\big(t_1,{\ }0,{\ }
t_1s_1^2-t_1^2s_1-t_1s_3\big),\\
& Y(s_1,s_3,s_4)\cdot b(t_3)\cdot Y(s_1,s_3,s_4)^{-1}= Y(0,{\ }t_3,{\ }t_3s_1),\\
& Y(s_1,s_3,s_4)\cdot c(t_4)\cdot Y(s_1,s_3,s_4)^{-1}= c(t_4).
\end{align*}
Then the conjugacy classes of ${^2}G_2^{syl}(3)$ are
listed in Table \ref{table:conjugacy classes-2G2(3)}.
\begin{table}[!htp]
\caption{Conjugacy classes of ${^2}G_2^{syl}(3)$}
\label{table:conjugacy classes-2G2(3)}
\begin{align*}
\begin{array}{|l|l|c|}\hline
\multicolumn{1}{|c|}{\text{Representative } y\in U}
& \multicolumn{1}{c|}{\text{Conjugacy Classes } ^Uy}
& \rule{0pt}{11pt}
|{^Uy}|
\\\hline
I_8 & Y(0,0,0) & 1\\\hline
Y(0,0,t_4^*),
{\ }t_4^*\in \mathbb{F}_3^*
& Y(0,0,t_4^*) & 1\\\hline
Y(0,t_3^*,0),
{\ }t_3^*\in \mathbb{F}_3^*
& Y(0,t_3^*,s_4),
{\ }
s_4\in \mathbb{F}_3
& 3\\
\hline
Y(t_1^*,t_3,0),
{\ }t_1^*\in \mathbb{F}_3^*,
{\,} t_3 \in \mathbb{F}_3
& Y(t_1^*,t_3,s_4),
{\ } s_4\in \mathbb{F}_3
& 3\\
\hline
\end{array}
\end{align*}
\end{table}
\end{Proposition}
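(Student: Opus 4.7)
The plan is to deduce this proposition as the $m = 0$ specialization of Proposition \ref{prop:conjugacy classes of a,b,c-2G2} and Proposition \ref{prop:conjugacy classes-2G2}, using the fact that over $\mathbb{F}_3$ we have $\theta = 3^0 = 1$ and $t^{3\theta} = t^3 = t$ for every $t \in \mathbb{F}_3$.

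First, I would substitute $\theta = 1$ into the conjugation formula of Proposition \ref{prop:conjugacy classes of a,b,c-2G2} and simplify. The second coordinate $t_3 + t_1 s_1^{3\theta} - t_1^{3\theta} s_1$ collapses to $t_3$, and the third coordinate $t_4 + (t_1^2 s_1^{3\theta} + t_1^{3\theta} s_1^2) + t_1^{3\theta+1} s_1 + (t_3 s_1 - t_1 s_3)$ simplifies in characteristic $3$ (using $2 \equiv -1$) to $t_4 + t_1 s_1^2 - t_1^2 s_1 + t_3 s_1 - t_1 s_3$, yielding the stated formula. The three special cases for $a(t_1)$, $b(t_3)$, $c(t_4)$ follow by setting $(t_3,t_4) = (0,0)$, $(t_1,t_4) = (0,0)$, or $(t_1,t_3) = (0,0)$, respectively.

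Next, I would read off the conjugacy classes directly from the simplified conjugation formula. The identity $I_8 = Y(0,0,0)$ is its own class. For $c(t_4^*)$ with $t_4^* \in \mathbb{F}_3^*$, the formula shows every conjugate equals $c(t_4^*)$, so each such element forms a singleton class; this accounts for $Y_c = Z(U)$. For $b(t_3^*) = Y(0,t_3^*,0)$ with $t_3^* \in \mathbb{F}_3^*$, the conjugate by $Y(s_1,s_3,s_4)$ is $Y(0,t_3^*,t_3^* s_1)$; since $t_3^* \in \mathbb{F}_3^*$, as $s_1$ ranges over $\mathbb{F}_3$ the third coordinate covers all of $\mathbb{F}_3$, giving a class of size $3$. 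For $Y(t_1^*, t_3, 0)$ with $t_1^* \in \mathbb{F}_3^*$ and $t_3 \in \mathbb{F}_3$, the first two coordinates are preserved by conjugation, while the third coordinate becomes $t_1^* s_1^2 - (t_1^*)^2 s_1 + t_3 s_1 - t_1^* s_3$; taking $s_1 = 0$ and letting $s_3$ vary shows $-t_1^* s_3$ already covers $\mathbb{F}_3$, so the class has size $3$ and equals $\{Y(t_1^*, t_3, s_4) : s_4 \in \mathbb{F}_3\}$.

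Finally, I would check that distinct representatives in the table lie in genuinely distinct classes (immediate because the first two coordinates are conjugation-invariant) and verify the bookkeeping: $1 + 2 \cdot 1 + 2 \cdot 3 + (2 \cdot 3) \cdot 3 = 1 + 2 + 6 + 18 = 27 = |U|$, with a total of $1 + 2 + 2 + 6 = 11 = 5q - 4$ classes, consistent with the general count in Section \ref{conjugacy classes-2G2}. There is no serious obstacle here; the only point requiring care is ensuring the third coordinate genuinely sweeps out all of $\mathbb{F}_3$ in the last case, and that is handled by the $-t_1^* s_3$ term alone once $t_1^* \neq 0$.
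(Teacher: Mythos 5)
Your proposal is correct and follows essentially the same route as the paper: the paper obtains this proposition by specializing the general conjugation formula of Proposition \ref{prop:conjugacy classes of a,b,c-2G2} (equivalently, Proposition \ref{prop:conjugacy classes-2G2} with $m=0$, where $\theta=1$ and $t^{3\theta}=t$ on $\mathbb{F}_3$) and reading off the orbits, exactly as you do. Your verification that the third coordinate sweeps $\mathbb{F}_3$ and the final counting check match the paper's Table \ref{table:conjugacy classes-2G2(3)}.
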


Now we explain the constructions of the irreducible characters of ${^2}G_2^{syl}(3)$.
\begin{Proposition}\label{construction of irr. char. of 2G2}
Let $U:= {^2}G_2^{syl}(3)$,
$\chi\in \mathrm{Irr}(U)$,
and
$A_{ij}\in \mathbb{F}_q$,
$A_{ij}^*\in \mathbb{F}_q^*$
$(1\leq i, j \leq 8)$.
\begin{itemize}
\setlength\itemsep{0em}
  \item [(1)]
  Let
$\bar{U}:={Y_c}\backslash U=\bar{Y}_a\bar{Y}_b$,
$\bar{\chi}_{lin}^{A_{12},A_{13}}\in \mathrm{Irr}(\bar{U})$,
$\bar{\chi}_{lin}^{A_{12},A_{13}}(\bar{a}(t_1)\bar{b}(t_3))
         :=\vartheta(A_{12}t_1)\cdot \vartheta(-A_{13}t_3)$,
and  $\chi_{lin}^{A_{12},A_{13}}$
  be the lift of $\bar{\chi}_{lin}^{A_{12},A_{13}}$ to $U$.
Then
\begin{align*}
\mathfrak{F}_{lin}
    := \{\chi\in \mathrm{Irr}(U) \mid  Y_c\subseteq \ker{\chi}\}
    = \{\chi_{lin}^{A_{12},A_{13}}  \mid A_{12}, A_{13}\in \mathbb{F}_{q}\}.
\end{align*}
  \item [(2)]
  Let
  $H:=Y_bY_c$,
  $\lambda^{A_{14},{A}_{13}}\in \mathrm{Irr}(H)$,
  $\lambda^{A_{14},{A}_{13}}(b(t_3)c(t_4))
         :=\vartheta(-A_{14}t_4-A_{13}t_3)$,
  ${\chi}_{2,q}^{A_{14}}:=
    \mathrm{Ind}_H^U \lambda^{A_{14},0}$.
Then
 $
   \mathfrak{F}_{2}
    := \{\chi\in \mathrm{Irr}(U) \mid  Y_c \nsubseteq \ker{\chi}\}
    = \{\chi_{2,q}^{A_{14}^*}
     \mid A_{14}^*\in \mathbb{F}_{q}^*\}$.
 \end{itemize}
 Hence $\mathrm{Irr}(U)=\mathfrak{F}_{lin}\dot{\cup} \mathfrak{F}_{2}$.
 \end{Proposition}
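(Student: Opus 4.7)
The plan is to split $\mathrm{Irr}(U)$ according to whether the center $Y_c = Z(U)$ lies in the kernel: characters with $Y_c \subseteq \ker\chi$ arise by inflation from the abelian quotient $\bar U = U/Y_c$, while those with $Y_c \not\subseteq \ker\chi$ are produced by Clifford induction from the abelian normal subgroup $H = Y_bY_c$.

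For part~(1), I invoke the preceding corollary that $\bar U$ is abelian of order $q^2 = 9$, so $\mathrm{Irr}(\bar U)$ consists of $q^2$ linear characters, and inflation gives a bijection between $\mathrm{Irr}(\bar U)$ and $\{\chi \in \mathrm{Irr}(U) \mid Y_c \subseteq \ker\chi\}$. I would then verify that the proposed formula $\bar\chi_{lin}^{A_{12},A_{13}}(\bar a(t_1)\bar b(t_3)) = \vartheta(A_{12}t_1)\vartheta(-A_{13}t_3)$ defines a homomorphism $\bar U \to \mathbb{C}^*$, using the explicit multiplication on $\bar U$ induced by the multiplication formula of $U$, and that distinct $(A_{12},A_{13}) \in \mathbb{F}_q^2$ yield distinct characters. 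Since the count is $q^2$, this exhausts $\mathrm{Irr}(\bar U)$ and establishes $\mathfrak{F}_{lin}$.

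For part~(2), I apply Clifford theory to $H \lhd U$, which is abelian (because $Y_c$ is central in $U$) of index $q = 3$. For each $A_{14}^* \in \mathbb{F}_q^*$, the map $\lambda^{A_{14}^*,0}(b(t_3)c(t_4)) := \vartheta(-A_{14}^* t_4)$ is a linear character of $H$. Using the conjugation formula $a(s_1)b(t_3)a(s_1)^{-1} = b(t_3)c(s_1 t_3)$ stated earlier, one computes
\[
({}^{a(s_1)}\lambda^{A_{14}^*,0})(b(t_3)c(t_4)) = \vartheta(-A_{14}^*(t_4 - s_1 t_3)) = \lambda^{A_{14}^*,0}(b(t_3)c(t_4)) \cdot \vartheta(A_{14}^* s_1 t_3),
\]
which equals $\lambda^{A_{14}^*,0}$ only when $s_1 = 0$ (as $A_{14}^* \ne 0$ and $\vartheta$ is nontrivial). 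Hence the $U$-inertia of $\lambda^{A_{14}^*,0}$ is $H$ itself, and Clifford's theorem yields that $\chi_{2,q}^{A_{14}^*} := \mathrm{Ind}_H^U \lambda^{A_{14}^*,0}$ is irreducible of degree $[U:H] = q$. Distinct $A_{14}^*$ produce distinct central characters on $Y_c$, so the $q-1$ characters in $\mathfrak{F}_2$ are pairwise distinct.

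To conclude that $\mathrm{Irr}(U) = \mathfrak{F}_{lin} \,\dot\cup\, \mathfrak{F}_{2}$ it suffices to verify the sum of squared degrees: $q^2 \cdot 1^2 + (q-1)\cdot q^2 = q^3 = |U|$, which accounts for every irreducible character. Equivalently, $|\mathfrak{F}_{lin}| + |\mathfrak{F}_2| = q^2 + (q-1) = q^2 + q - 1$ matches the number of conjugacy classes computed in the preceding proposition. The main obstacle I expect is the verification in part~(1) that the stated formula genuinely defines characters on $\bar U$: since the multiplication on $\bar U$ is not componentwise addition in $(t_1,t_3)$ (there is a cross term $-t_1s_1$ in the second coordinate), one must carefully track how the parameterization interacts with the quotient group law. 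The inertia computation in part~(2) is then comparatively routine once the central role of $Y_c$ is identified.
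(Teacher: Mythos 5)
Your route coincides with the paper's: split $\mathrm{Irr}(U)$ according to whether $Y_c=Z(U)=[U,U]$ lies in the kernel, obtain $\mathfrak{F}_{lin}$ by inflation from the abelian quotient $\bar U=U/Y_c$ of order $q^2$, and obtain $\mathfrak{F}_2$ by inducing $\lambda^{A_{14}^*,0}$ from $H=Y_bY_c$ after checking that its inertia group is $H$. Your part (2) is sound (the sign $t_4-s_1t_3$ versus $t_4+s_1t_3$ is only a left/right conjugation convention and does not affect $I_U(\lambda^{A_{14}^*,0})=H$), and your closing count $q^2\cdot 1^2+(q-1)q^2=q^3=|U|$ is a harmless supplement to the paper's argument.

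The genuine problem is the step you defer in part (1). The verification that $\bar\chi_{lin}^{A_{12},A_{13}}(\bar a(t_1)\bar b(t_3)):=\vartheta(A_{12}t_1)\vartheta(-A_{13}t_3)$ is a homomorphism is not just delicate: it fails whenever $A_{13}\neq 0$, for exactly the reason you flag. From $a(t_1)a(s_1)=Y(t_1+s_1,-t_1s_1,\ldots)$ one gets $\bar a(t_1)\bar b(t_3)\cdot\bar a(s_1)\bar b(s_3)=\bar a(t_1+s_1)\bar b(t_3+s_3-t_1s_1)$, so the proposed function is multiplicative only up to the factor $\vartheta(A_{13}t_1s_1)$; concretely $\bar a(1)^2=\bar a(2)\bar b(2)$, where the formula returns $\vartheta(2A_{12})$ on one side and $\vartheta(2A_{12}+A_{13})$ on the other. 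The honest linear characters carry a quadratic correction, e.g. $Y(t_1,t_3,t_4)\mapsto\vartheta(A_{12}t_1)\vartheta(-A_{13}(t_3-t_1^2))$, which is multiplicative because $t_3-t_1^2$ is additive for the quotient group law in characteristic $3$ (the cross term $-t_1s_1$ is cancelled by $-(t_1+s_1)^2+t_1^2+s_1^2=-2t_1s_1$). Note that the paper's own proof of (1) never checks the displayed formula: it argues abstractly that $[U,U]=Y_c$, so the characters with $Y_c\subseteq\ker\chi$ are precisely the $q^2$ lifts of $\mathrm{Irr}(\bar U)$, and that abstract identification, combined with your part (2) and the count, already yields $\mathrm{Irr}(U)=\mathfrak{F}_{lin}\,\dot{\cup}\,\mathfrak{F}_2$. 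So to complete your plan you must either argue abstractly as the paper does, or replace the stated parametrization of $\mathfrak{F}_{lin}$ by the corrected one; as written, the verification you postpone cannot succeed.
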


Note that when $q=3$,
the supercharacter $\Psi_{M{(A_{12}e_{12})}}$ afforded by $\mathbb{C}\mathcal{O}_U([A_{12}e_{12}])\in \mathfrak{F}_{1}$
(see \ref{family of orbit modules-2G2}
and
\ref{set of supercharacters-2G2})
is exactly the irreducible character $\chi_{lin}^{A_{12},0}\in \mathfrak{F}_{lin}$.
\begin{proof}
Let $\chi \in \mathrm{Irr}(U)$.
\begin{itemize}
\setlength\itemsep{0em}
\item [(1)]  {\it Family $\mathfrak{F}_{lin}$, where $Y_c \subseteq \ker\chi$}.

Since the commutator subgroup is $Y_c$,
all linear characters of $U$
are precisely the lifts of the irreducible characters of
the abelian quotient group ${Y_c}\backslash U$ to $U$.
\item [(2)]
{\it Family $\mathfrak{F}_{2}$, where $Y_c \nsubseteq \ker\chi$}.

 Let
   $H:=Y_bY_c$,
   $\lambda^{A_{14},{A}_{13}}\in \mathrm{Irr}(H)$
   with
   $\lambda^{A_{14},{A}_{13}}(b(t_3)c(t_4))
          =\vartheta(-A_{14}t_4-A_{13}t_3)$.
We note that $Y_a $ is a transversal of $H$ in $U$,
and that $Z(U)=Y_c$.
For all $s_1\in \mathbb{F}_q$,
\begin{align*}
 & \left(\lambda^{A_{14}^*,A_{13}}\right)^{a(s_1)}
   (b(t_3)c(t_4))
 =\lambda^{A_{14}^*,A_{13}}
   \big(a(s_1)\cdot b(t_3)c(t_4) \cdot a(s_1)^{-1}\big)\\
 =& \lambda^{A_{14}^*,A_{13}}
   \big( b(t_3)c(t_4+s_1t_3)\big)
 =\vartheta \big(-A_{14}^*(t_4+s_1t_3)-A_{13}t_3 \big)\\
 =&\vartheta \big(-A_{14}^*t_4-(A_{13}+s_1A_{14}^*)t_3 \big).
\end{align*}
Thus the inertia group
$I_{U}(\lambda^{A_{14}^*,A_{13}})=H$.
By Clifford's Theorem,
$\mathrm{Ind}_H^U \lambda^{A_{14}^*,A_{13}} \in \mathrm{Irr}(U)$ and
\begin{align*}
 \mathrm{Res}^U_H
 \mathrm{Ind}_H^U \lambda^{A_{14}^*,A_{13}}
 =\sum_{s_1\in \mathbb{F}_q}{\left(\lambda^{A_{14}^*,A_{13}}\right)^{a(s_1)}}
 =\sum_{s_1\in \mathbb{F}_q}{\lambda^{A_{14}^*,(A_{13}+s_1A_{14}^*)}}
 =\sum_{B_{13}\in \mathbb{F}_q}{\lambda^{A_{14}^*,B_{13}}}.
\end{align*}
Let ${\chi}_{2,q}^{A_{14}^*}:=\mathrm{Ind}_H^U \lambda^{A_{14}^*,0} $.
By Clifford theory,
there are $q-1$ almost faithful irreducible characters of ${U}$, i.e.
$\mathfrak{F}_{2}
    = \{\chi\in \mathrm{Irr}(U) \mid  Y_c \nsubseteq \ker{\chi}\}
    = \{\chi_{2,q}^{A_{14}^*}
     \mid A_{14}^*\in \mathbb{F}_{q}^*\}$.
\end{itemize}
\end{proof}

\begin{Proposition}
\label{prop:character table-2G2(3)}
The character table of ${^2}G_2^{syl}(3)$
is shown in Table \ref{table:character table-2G2(3)}.
\begin{table}[!htp]
\caption{Character table of ${^2}G_2^{syl}(3)$}
\label{table:character table-2G2(3)}
\index{character table!-of ${^2}G_2^{syl}(3)$}%
\begin{align*}
 \renewcommand\arraystretch{1.5}
\begin{array}{l|cccc}
×
& I_8
& Y(t_1^*,t_3,0)
& Y(0,t_3^*,0)
& Y(0,0,t_4^*)
\\
\hline
\chi_{lin}^{0,0}
& 1 & 1 & 1 & 1 \\
\chi_{lin}^{A_{12}^*,0}
& 1
& \vartheta (A_{12}^*t_1^*)
& 1
& 1 \\
\chi_{lin}^{A_{12},A_{13}^*}
& 1
& \begin{array}{l}
\vartheta (A_{12}t_1^*)
\cdot \vartheta (-A_{13}^*t_3)
  \end{array}
& \vartheta (-A_{13}^*t_3^*)
& 1 \\
\chi_{2,q}^{A_{14}^*}
& 3
& 0
& 0
& 3\cdot \vartheta (-A_{14}^*t_4^*)
\end{array}
\end{align*}
\end{table}
\end{Proposition}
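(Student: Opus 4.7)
The plan is to leverage the classification $\mathrm{Irr}(U)=\mathfrak{F}_{lin}\dot\cup \mathfrak{F}_2$ from Proposition \ref{construction of irr. char. of 2G2}. A sanity check should come first: the family $\mathfrak{F}_{lin}$ has $|\mathbb{F}_3|^2=9$ members and $\mathfrak{F}_2$ has $|\mathbb{F}_3^*|=2$ members, giving $11$ irreducible characters, which matches the $11$ conjugacy classes of Table \ref{table:conjugacy classes-2G2(3)} (one for $I_8$, two of size $1$ for $Y(0,0,t_4^*)$, two of size $3$ for $Y(0,t_3^*,0)$, and six of size $3$ for $Y(t_1^*,t_3,0)$). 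The dimension equation $9\cdot 1^2+2\cdot 3^2=27=|U|$ then provides independent confirmation before any entries are computed.

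For the first three rows (i.e.\ the $\chi_{lin}^{A_{12},A_{13}}$), I would compute values directly from the defining formula
\[
\chi_{lin}^{A_{12},A_{13}}\bigl(Y(t_1,t_3,t_4)\bigr)=\vartheta(A_{12}t_1)\cdot\vartheta(-A_{13}t_3),
\]
which is valid on all of $U$ because $Y_c\subseteq\ker\chi_{lin}^{A_{12},A_{13}}$ and $\bar a(t_1)\bar b(t_3)$ is the image of $Y(t_1,t_3,t_4)$ in $\bar U=Y_c\backslash U$. Substitution of each of the four class representatives $I_8$, $Y(t_1^*,t_3,0)$, $Y(0,t_3^*,0)$, $Y(0,0,t_4^*)$ is then immediate and produces the entries in the rows labelled $\chi_{lin}^{0,0}$, $\chi_{lin}^{A_{12}^*,0}$ and $\chi_{lin}^{A_{12},A_{13}^*}$.

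The nontrivial work lies in the last row. First I would verify that $H=Y_bY_c$ is normal in $U$ using the conjugation formulas of Proposition \ref{prop:conjugacy classes-2G2(3)}: conjugating $b(t_3)$ or $c(t_4)$ by any $Y(s_1,s_3,s_4)$ lands back in $Y_bY_c$. With $Y_a$ as a transversal of $H$ in $U$ of size $3$, the Frobenius induced-character formula specialises to
\[
\chi_{2,q}^{A_{14}^*}(g)=\sum_{s_1\in\mathbb{F}_3}\lambda^{A_{14}^*,0}\bigl(a(s_1)\,g\,a(s_1)^{-1}\bigr)\quad\text{if }g\in H,
\]
and $\chi_{2,q}^{A_{14}^*}(g)=0$ otherwise (by normality of $H$). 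The off-$H$ case disposes of the column $Y(t_1^*,t_3,0)$ immediately. For $g=c(t_4^*)\in Z(U)$ the summand is constant and evaluates to $3\,\vartheta(-A_{14}^*t_4^*)$.

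The only delicate step is the column $Y(0,t_3^*,0)=b(t_3^*)$: by Proposition \ref{prop:conjugacy classes-2G2(3)}, $a(s_1)b(t_3^*)a(s_1)^{-1}=b(t_3^*)c(t_3^*s_1)$, so the induced-character sum becomes
\[
\sum_{s_1\in\mathbb{F}_3}\vartheta(-A_{14}^*t_3^*s_1)=0,
\]
vanishing because $A_{14}^*t_3^*\in\mathbb{F}_3^*$ and $\vartheta$ is a nontrivial character of $\mathbb{F}_3^+$ (see \ref{vartheta}). This is the main (mild) obstacle: one has to recognise that conjugation by $a(s_1)$ sweeps out a full $\mathbb{F}_3$-coset in the central factor $Y_c$, forcing the character sum to collapse by orthogonality. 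Everything else reduces to routine substitution into the formulas already recorded in this section.
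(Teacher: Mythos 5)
Your proposal is correct and follows essentially the same route as the paper: the linear characters are read off as lifts of $\bar{\chi}_{lin}^{A_{12},A_{13}}$ through the central quotient $Y_c\backslash U$, and the degree-$3$ row is computed from $\mathrm{Ind}_H^U\lambda^{A_{14}^*,0}$ with $H=Y_bY_c$ normal and $Y_a$ as transversal, the key point in both arguments being the vanishing sum $\sum_{s_1\in\mathbb{F}_3}\vartheta(-A_{14}^*t_3^*s_1)=0$ at $b(t_3^*)$ and the vanishing off $H$ at $Y(t_1^*,t_3,0)$. Your preliminary count $9\cdot 1^2+2\cdot 3^2=27=|U|$ is a harmless extra consistency check not in the paper.
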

\begin{proof}
We use the notation of Proposition \ref{construction of irr. char. of 2G2}.
Let $u=Y(t_1,t_3,t_4)\in U$,
$ H=Y_bY_c$,
and
$\lambda^{A_{14},{A}_{13}}\in \mathrm{Irr}(H)$
  with
$\lambda^{A_{14},{A}_{13}}(b(t_3)c(t_4))
         =\vartheta(-A_{14}t_4-A_{13}t_3)$.
We have
\begin{align*}
\chi_{2,q}^{A_{14}^*}(u)
=& \mathrm{Ind}_{H}^{U}\lambda^{A_{14}^*,0}(u)
= \frac{1}{|H|}
    \sum_{\substack{g\in U\\
             g\cdot Y(t_1,t_3,t_4)\cdot g^{-1}\in H} }
    \lambda^{A_{14}^*,0}\big(g\cdot Y(t_1,t_3,t_4)\cdot g^{-1}\big).
\end{align*}
Then
\begin{align*}
\chi_{2,q}^{A_{14}^*}(c(t_4))=& q\cdot \lambda^{A_{14}^*,0}(c(t_4))
                             =q\cdot\vartheta(-A_{14}^*t_4)
                             =3\cdot\vartheta(-A_{14}^*t_4),\\
\chi_{2,q}^{A_{14}^*}(b(t_3^*))=&  \frac{1}{|H|}
                                   \sum_{\substack{g:=Y(s_1,s_3,s_4)\in U\\
                                    g\cdot b(t_3^*)\cdot g^{-1}\in H} }
                                    \lambda^{A_{14}^*,0}\big(g\cdot b(t_3^*)\cdot g^{-1}\big)
                               =  \sum_{s_1\in \mathbb{F}_q }
                                    \lambda^{A_{14}^*,0}\big(Y(0,t_3^*, t_3^*s_1)\big)\\
                               =&  \sum_{s_1\in \mathbb{F}_q }
                                   {\vartheta(-A_{1,4}^* t_3^*s_1)\big)}
                               = 0,\\
\chi_{2,q}^{A_{14}^*}(Y(t_1^*,t_3,0))=& 0.
\end{align*}
All the other values are determined similarly.
\end{proof}

\begin{Proposition}[Supercharacters and irreducible characters]
\label{relation: superchar. and irr.-2G2(3)}
If $q=3$, then the relations between supercharacters and irreducible characters of ${^2}G_2^{syl}(3)$
are established.
\begin{align*}
\Psi_{M{(A_{14}^*e_{14})}}= 3 \cdot \chi_{2,q}^{A_{14}^*},
 \quad
\Psi_{M{(A_{13}^*e_{13})}}= \sum_{A_{12}\in \mathbb{F}_q} \chi_{lin}^{A_{12},A_{13}^*},
\quad
\Psi_{M{(A_{12}^*e_{12})}}= \chi_{lin}^{A_{12}^*,0},
 \quad
\Psi_{M{(0)}}= \chi_{lin}^{0,0}.
\end{align*}

\end{Proposition}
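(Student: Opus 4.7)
The plan is to verify each of the four identities by comparing character values on every conjugacy class of $U={^2}G_2^{syl}(3)$, using Table \ref{table:supercharacter table-2G2} for the left-hand sides and Table \ref{table:character table-2G2(3)} for the right-hand sides. Since characters are determined by their values on conjugacy classes (and supercharacters are constant on superclasses, which, when $q=3$, are unions of the conjugacy classes listed in Table \ref{table:conjugacy classes-2G2(3)}), pointwise agreement suffices.

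First I would dispense with the easy three identities. The equality $\Psi_{M(0)}=\chi_{lin}^{0,0}$ is immediate since both characters take value $1$ everywhere. For $\Psi_{M(A_{12}^*e_{12})}=\chi_{lin}^{A_{12}^*,0}$, a direct comparison of the two rows in the tables shows both characters take the values $1,\ \vartheta(A_{12}^*t_1^*),\ 1,\ 1$ on the classes of $I_8$, $Y(t_1^*,t_3,0)$, $Y(0,t_3^*,0)$, $Y(0,0,t_4^*)$ respectively. For $\Psi_{M(A_{14}^*e_{14})}=3\chi_{2,q}^{A_{14}^*}$, one notes that $q^2=9=3\cdot 3$ and $q^2\vartheta(-A_{14}^*t_4^*)=3\cdot 3\vartheta(-A_{14}^*t_4^*)$, while both characters vanish on the classes represented by $Y(t_1^*,t_3,0)$ and $Y(0,t_3^*,0)$.

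The identity $\Psi_{M(A_{13}^*e_{13})}=\sum_{A_{12}\in \mathbb{F}_q}\chi_{lin}^{A_{12},A_{13}^*}$ requires the key observation that when $t_1^*\in \mathbb{F}_q^*$ we have the orthogonality relation
\[
\sum_{A_{12}\in \mathbb{F}_q}\vartheta(A_{12}t_1^*)=0
\]
(see \ref{vartheta}). Evaluating the right-hand side on each class and applying this orthogonality on the class of $Y(t_1^*,t_3,0)$ yields the values $q$, $0$, $q\vartheta(-A_{13}^*t_3^*)$, $q$, which match those of $\Psi_{M(A_{13}^*e_{13})}$ from Table \ref{table:supercharacter table-2G2}.

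The only mild obstacle is a bookkeeping one: in the $q=3$ case the superclass $C_1(t_1^*)$ is no longer a single conjugacy class but, by \ref{superclass:ci ti-2G2} together with $\mathrm{im}\,\varsigma_{t_1^*}=\{0\}$ (since $\theta=1$ forces $\varsigma_{t_1^*}\equiv 0$), decomposes into the three conjugacy classes $\{Y(t_1^*,t_3,s_4):s_4\in\mathbb{F}_3\}$ indexed by $t_3\in\mathbb{F}_3$. One must therefore check that each side of each identity is constant across these three classes; for the supercharacter side this is built in, and for the right-hand sides it follows either from invariance under the $t_3$-coordinate (for $\Psi_{M(A_{14}^*e_{14})}$, where both sides vanish) or from the explicit $\vartheta(-A_{13}^*t_3)$ factor combined with the orthogonality sum (for $\Psi_{M(A_{13}^*e_{13})}$, where both sides still vanish after summing over $A_{12}$).
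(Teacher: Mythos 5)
Your proof is correct and follows essentially the same route as the paper, whose proof is precisely the comparison of Table \ref{table:supercharacter table-2G2} with Table \ref{table:character table-2G2(3)} class by class. The extra details you supply --- the orthogonality relation $\sum_{A_{12}\in\mathbb{F}_q}\vartheta(A_{12}t_1^*)=0$ from \ref{vartheta} and the observation that for $q=3$ the superclass $C_1(t_1^*)$ splits into the three conjugacy classes indexed by $t_3\in\mathbb{F}_3$ (since $\varsigma_{t_1^*}\equiv 0$) --- are exactly the checks the paper leaves implicit, so no further comment is needed.
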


\begin{proof}
Compare Table \ref{table:supercharacter table-2G2}
and Table \ref{table:character table-2G2(3)},
the formulae are obtained.
\end{proof}

\begin{Remark}
Let
$q=3$,
and
$\#\mathrm{Irr}_c$ be the number of irreducible characters of ${^2}G_2^{syl}(3)$
of dimension $q^c$ with $c\in \mathbb{N}$.
Then
$\#\mathrm{Irr}_1  = q-1=2$,
$\#\mathrm{Irr}_0  = q^2   = (q-1)^2+2(q-1)+1=9$,
and
\begin{align*}
  & \# \{\text{Irreducible Characters of } {^2}G_2^{syl}(3) \}
 = \# \{\text{Conjugacy Classes of } {^2}G_2^{syl}(3)\}\\
 =& q^2+q-1
 = (q-1)^2+3(q-1)+1
 =5(q-1)+1
 =11.
\end{align*}
We consider the analogue of Higman's conjecture,
Lehrer's conjecture and Isaacs' conjecture of $A_n(q)$
for ${^2}G_2^{syl}(3)$.
The conjectures hold for ${^2}G_2^{syl}(3)$.
\end{Remark}

\begin{Comparison}[Irreducible characters]
\label{com:character table-G2}
The irreducible characters of ${^2}G_2^{syl}(3)$ are determined by Clifford theory
that is similar to that of ${{^3D}_4^{syl}}(q^3)$ (see \cite{Le} and \cite[\S 4]{sun1})
and that of $G_2^{syl}(q)$ (see \cite[\S 9]{sunG2}).
\end{Comparison}


\section*{Acknowledgements}
This paper is a part of my PhD thesis \cite{sunphd} at the University of Stuttgart, Germany,
so I am deeply grateful to my supervisor Richard Dipper.
I also would like to thank  Jun Hu
and Mathias Werth
for the helpful discussions and valuable suggestions.


\bibliographystyle{alpha}
\bibliography{bibliography2G2upercharacterandConjclasses}

\end{document}